\documentclass{article}     

\usepackage{enumitem}
\usepackage[T1]{fontenc}
\usepackage[utf8]{inputenc}
\usepackage[english]{babel}
\usepackage[T1]{fontenc}
\usepackage{amsfonts}
\usepackage{amsmath}
\usepackage{amssymb}
\usepackage{amstext}
\usepackage{amsthm}
\usepackage{amscd}
\usepackage{cite}
\usepackage{url}
\usepackage{mathtools}
\usepackage{geometry}
\usepackage[hidelinks]{hyperref}
\RequirePackage{fix-cm}
\usepackage{graphicx}

\newtheorem{theorem}{Theorem}[section]
\newtheorem{corollary}[theorem]{Corollary}
\newtheorem{lemma}[theorem]{Lemma}
\newtheorem{proposition}[theorem]{Proposition}

\theoremstyle{definition}
\newtheorem{definition}[theorem]{Definition}
\newtheorem{remark}[theorem]{Remark}

\newtheorem{example}[theorem]{Example}
%%% Put your macros here:
\newcommand{\Q}{\mathbb{Q}}
\newcommand{\J}{\mathbb{J}}
\newcommand{\s}{\mathbb{S}}
\newcommand{\R}{\mathbb{R}}
\newcommand{\OO}{\mathcal{O}}
\newcommand{\Z}{\mathbb{Z}}

\newcommand{\C}{\mathbb{C}}
\newcommand{\I}{\mathbb{I}}
\newcommand{\B}{\mathfrak{B}}
\newcommand{\A}{\mathfrak{A}}
\newcommand{\p}{\mathfrak{P}}
\newcommand{\q}{\mathfrak{q}}
\newcommand{\N}{\mathbb{N}}
\newcommand{\eps}{\varepsilon}
\newcommand{\abs}[1]{\left|#1\right|}

\newcommand{\Ann}{\text{Ann}}

\def\End{\operatorname{End}}
\newcommand{\Norm}[1]{\left\lVert#1\right\rVert}
\def\Gal{\operatorname{Gal}}

\def\Ker{\operatorname{Ker}}

\def\min{\operatorname{min}}

\def\Cl{\operatorname{Cl}}

\begin{document}
	\title{Primitive divisors of sequences associated to elliptic
		curves with complex multiplication}
\author{Matteo Verzobio}
\date{}
\maketitle

\begin{abstract}
	Let $P$ and $Q$ be two points on an elliptic curve defined over a number field $K$. For $\alpha\in \End(E)$, define $B_\alpha$ to be the $\OO_K$-integral ideal generated by the denominator of $x(\alpha(P)+Q)$. Let $\OO$ be a subring of $\End(E)$, that is a Dedekind domain. We will study the sequence $\{B_\alpha\}_{\alpha\in \OO}$. We will show that, for all but finitely many $\alpha\in \OO$, the ideal $B_\alpha$ has a primitive divisor when $P$ is a non-torsion point and there exist two endomorphisms $g\neq 0$ and $f$ so that $f(P)=g(Q)$. This is a generalization of previous results on elliptic divisibility sequences.
\let\thefootnote\relax\footnotetext{Keywords: Elliptic curves, primitive divisors, elliptic divisibility sequences, endomorphisms, complex multiplication.}
\end{abstract}

	\section{Introduction}
		Let $E$ be an elliptic curve defined over a number field $K$ by a Weierstrass equation with integer coefficients. Consider $P$ and $Q$ two points in $E(K)$ and $\alpha\in \End(E)$, the ring of endomorphisms of $E$. Let $\OO_K$ be the ring of integers of $K$ and consider the fractional $\OO_K$-ideal $(x(\alpha(P)+Q))\OO_K$. We can write in a unique way
		\[
		(x(\alpha(P)+Q))\OO_K=\frac{A_\alpha(P,Q)}{B_\alpha(P,Q)}
		\]
		with $A_\alpha(P,Q)$ and $B_\alpha(P,Q)$ two coprime integral $\OO_K$-ideals. If $\alpha(P)+Q=O$, the identity of the curve, then we put $B_\alpha(P,Q)=0$. Let $\OO$ be a subring of $\End(E)$. We are interested in the study of the sequence $\{B_{\alpha}(P,Q)\}_{\alpha\in \OO}$. This is a sequence of integral $\OO_K$-ideals that depend from the number field $K$, from the elliptic curve $E$, from the points $P$ and $Q$, and from the ring $\OO$. 
		
		Usually, a sequence is a set of element indexed by $\N$. In this paper, we call sequences some sets that are indexed by a subring of $\End(E)$, that can be $\Z$ or a subring of the ring of integers of an imaginary quadratic field. In both cases, our sets of indexes are $\Z$-lattices (of rank $1$ or $2$). Hence, in some sense, they are a close generalization of $\N$. So, with a little abuse of notation, we decided to write that $\{B_\alpha(P,Q)\}_{\alpha \in \OO}$ is a sequence.
		\begin{definition}
			Fix $\OO$ a subring of $\End(E)$. We say that a term $B_\alpha(P,Q)$ of the sequence $\{B_{\beta}(P,Q)\}_{\beta \in \OO}$ has a primitive divisor if there exists a prime $\OO_K$-ideal $\p$ so that $\p$ divides $B_\alpha(P,Q)$ and does not divide $B_\beta(P,Q)$ for every $\beta\in \OO$ such that $0\leq \Norm{\beta}<\Norm{\alpha}$ and $B_\beta(P,Q)\neq 0$. With $\Norm{\cdot}$ we denote the degree of the endomorphism.
		\end{definition}
	In the case when $\OO=\Z$ and $Q=O$, the sequences $\{B_k(P,O)\}_{k\in \Z}$ are the so-called elliptic divisibility sequences. These sequences have been studied extensively and have many applications. 
	
	The problem of the primitive divisors for elliptic divisibility sequences has been introduced by Silverman in 1988.
			\begin{theorem}\cite[Proposition 10, Silverman, 1988]{silverman}
			Let $E$ be an elliptic curve defined over $\Q$, in minimal global form. Take $P$ in $E(\Q)$. Consider the sequence $\{B_n(P,O)\}_{n\in \Z}$. Suppose that $P$ is a non-torsion point. Then, for all but finitely many $n\in \Z$, $B_n(P,O)$ has a primitive divisor.
		\end{theorem}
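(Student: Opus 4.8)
The plan is to combine a sharp growth estimate for the positive integers $B_n:=B_n(P,O)$ (here $\OO_K=\Z$) with the arithmetic of their prime factorisations, and then to exploit the elementary fact that $\sum_{p\ \mathrm{prime}}p^{-2}<1$. Two preliminary remarks: the ordering in the definition of primitive divisor is by $\Norm{n}=\deg[n]=n^{2}$, i.e.\ by $\abs n$, and $x(-R)=x(R)$ gives $B_{-n}=B_n$; moreover $P$ non-torsion forces $B_n\ne 0$ for $n\ne 0$. So $B_n$ has a primitive divisor precisely when some prime $p\mid B_n$ satisfies $p\nmid B_m$ for all $1\le m<n$.

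The first task is to prove the growth estimate
\[
\log B_n=2\,\hat h(P)\,n^{2}+o(n^{2})\qquad(n\to\infty),
\]
where $\hat h$ denotes the canonical height and $\hat h(P)>0$ because $P$ is non-torsion. Writing $h$ for the naive height on $\mathbb P^{1}$ one has $h(x(nP))=2\hat h(nP)+O(1)=2\hat h(P)n^{2}+O(1)$, whereas the product formula over $\Q$ gives $h(x(nP))=\log^{+}\abs{x(nP)}_{\infty}+\log B_n$, the last summand being the logarithm of the denominator of $x(nP)$. The inequality $\log^{+}\abs{x(nP)}_{\infty}\ge 0$ already yields $\log B_n\le 2\hat h(P)n^{2}+O(1)$; the difficult half is the matching lower bound, equivalently $\log^{+}\abs{x(nP)}_{\infty}=o(n^{2})$. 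On the compact group $E(\R)$ the quantity $\log^{+}\abs{x(R)}_{\infty}$ stays bounded as long as $R$ avoids a neighbourhood of $O$ and tends to infinity (like a logarithm of the inverse distance to $O$) as $R\to O$, so this bound amounts to saying that $nP$ does not approach $O$ faster than super-exponentially in $n^{2}$. This is where Siegel's theorem on integral points---equivalently, a lower bound for linear forms in elliptic logarithms---has to be invoked, and I expect it to be the main obstacle of the proof; minimality of the global Weierstrass model is used here and in the next step to keep the local contributions under control.

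Next I would record the divisibility structure of the sequence. For a prime $p$ of good reduction let $r_p\ge 1$ be the rank of apparition, the least $m\ge 1$ with $p\mid B_m$; it equals the order of the reduction $\widetilde P$ in $E(\F_p)$. Then $p\mid B_m\iff r_p\mid m$, and an analysis of the formal group of $E$ at $p$ gives $v_p(B_m)=v_p(B_{r_p})$ whenever $r_p\mid m$ and $p\nmid m$, and $v_p(B_m)\le v_p(B_{r_p})+O(v_p(m))$ in general. At the finitely many primes $p\mid\Delta_E$ of bad reduction, the Tate parametrisation (resp.\ the Néron model) shows that $\sum_{p\mid\Delta_E}v_p(B_n)\log p=O(\log n)$.

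Finally, suppose $B_n$ has no primitive divisor; I want a contradiction for $n$ large. Then every prime $p\mid B_n$ divides $B_m$ for some $1\le m<n$. If $p$ has good reduction this forces $r_p\mid\gcd(m,n)$, a proper divisor of $n$, hence $r_p\mid n/q$ and $p\mid B_{n/q}$ for some prime $q\mid n$ (in particular $r_p<n$, so $p$ itself is not primitive). Plugging this into the exponent estimates above yields $v_p(B_n)\le\sum_{q\mid n}v_p(B_{n/q})+O(v_p(n))$ for each good $p\mid B_n$; summing over $p$ and adding the bad-prime term gives
\[
\log B_n\ \le\ \sum_{q\mid n\ \mathrm{prime}}\log B_{n/q}\ +\ O(\log n).
\]
Now I insert the growth estimate, using $\log B_{n/q}\le 2\hat h(P)(n/q)^{2}+O(1)$ and $\log B_n\ge 2\hat h(P)n^{2}-o(n^{2})$:
\[
2\hat h(P)\,n^{2}\bigl(1-o(1)\bigr)\ \le\ 2\hat h(P)\,n^{2}\sum_{q\mid n}\frac{1}{q^{2}}+o(n^{2})\ \le\ 2\hat h(P)\,n^{2}\sum_{p\ \mathrm{prime}}\frac{1}{p^{2}}+o(n^{2}).
\]
Dividing by $2\hat h(P)n^{2}$ and letting $n\to\infty$ contradicts $\sum_{p}p^{-2}=0.4522\ldots<1$. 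Hence $B_n$ has a primitive divisor for all sufficiently large $n$, which is the assertion of the theorem.
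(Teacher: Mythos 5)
Your argument is correct and is essentially Silverman's original proof, which this paper only cites and whose four-step outline in Section \ref{structure} your proposal reproduces: non-primitive divisors of $B_n$ force divisibility of some $B_{n/q}$, formal-group control of the valuations, the Siegel-type estimate $\log B_n=2\hat h(P)n^2+o(n^2)$ (correctly flagged as the deep input), and the divisor-sum contradiction. Your only deviation is cosmetic: you sum over $n/q$ for primes $q\mid n$ and invoke $\sum_p p^{-2}<1$, where the sketch in Section \ref{structure} sums over all proper divisors $k\mid n$ and uses $\zeta(2)-1<1$; both close the argument.
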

	This theorem was generalized by Cheon and Hahn, in 1999, to elliptic curves defined over any number fields.
		\begin{theorem}\cite[Cheon and Hahn, 1999]{ChHa}
		Let $E$ be an elliptic curve defined over a number field $K$. Take $P$ in $E(K)$. Consider the sequence $\{B_n(P,O)\}_{n\in \Z}$. Suppose that $P$ is a non-torsion point. Then, for all but finitely many $n\in \Z$, $B_n(P,O)$ has a primitive divisor.
	\end{theorem}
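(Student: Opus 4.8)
The approach I would take is the classical one, combining the theory of canonical heights with reduction of $E$ modulo finite primes, along the lines of Silverman's original argument. Write $B_n:=B_n(P,O)$ and fix a finite set $S$ of primes of $\OO_K$ containing the primes of bad reduction for the chosen model, the primes dividing $B_1$, and the finitely many primes of residue characteristic $p$ with $p-1\le 2[K:\Q]$. The first step is to reformulate ``primitive divisor'' via reduction: for $\p\notin S$ one has $\p\mid B_n$ precisely when $nP$ lies in the kernel of reduction at $\p$, and since that kernel is a subgroup, $\{\,n:\p\mid B_n\,\}=r_\p\Z$ for a positive integer $r_\p$ (the \emph{rank of apparition} of $\p$, equal to the order of the reduction of $P$ in $\widetilde E(\F_\p)$). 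Consequently $\{B_n\}$ is a divisibility sequence, and since $B_{-n}=B_n$, the ideal $B_n$ has a primitive divisor if and only if either some prime of $S$ is primitive for $B_n$, or some prime $\p\notin S$ satisfies $r_\p=|n|$.

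Next I would assemble two quantitative inputs. The first is a growth bound for valuations: working in the formal group at a prime $\p\notin S$ with $r_\p=d\mid n$, multiplication by $n/d$ is an automorphism of the formal group when $p\nmid n/d$, giving $\ord_\p(B_n)=\ord_\p(B_d)$ in that case, and in general (the hypothesis $p-1>2[K:\Q]$ keeping us in the ``deep'' regime) $\ord_\p(B_n)\le \ord_\p(B_d)+2e_\p\,\ord_p(n/d)$, while for $\p\in S$ one checks directly that $\ord_\p(B_n)=O(\log n)$. The second is the growth of $\log\Norm{B_n}$: from $\widehat h(nP)=n^2\widehat h(P)$ and $\widehat h(R)=\tfrac12 h(x(R))+O(1)$, and the fact that $\log\Norm{B_n}$ is the non-archimedean part of $[K:\Q]\,h(x(nP))$, one gets at once the upper bound $\log\Norm{B_n}\le 2[K:\Q]\,\widehat h(P)\,n^2+O(1)$. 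For the matching lower bound I would invoke Siegel's theorem in the form that for each place $v$ the local height $\widehat\lambda_v(nP)$ is $o(\widehat h(nP))$; summing over the finitely many archimedean places shows the archimedean part of $[K:\Q]\,h(x(nP))$ is $o(n^2)$, whence $\log\Norm{B_n}=c\,n^2+o(n^2)$ with $c:=2[K:\Q]\,\widehat h(P)$, which is positive exactly because $P$ is non-torsion.

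To finish, suppose $B_n$ has no primitive divisor; we may assume $n>0$. Then no prime $\p\notin S$ has $r_\p=n$, so every such prime dividing $B_n$ has $r_\p$ equal to a proper divisor $d$ of $n$; grouping by $d$, using the valuation bound together with $\sum_{\p:\,r_\p=d}\ord_\p(B_d)\log\Norm{\p}\le\log\Norm{B_d}$, and adding the $O(\log n)$ contribution of the primes of $S$,
\[
\log\Norm{B_n}\ \le\ \sum_{\substack{d\mid n\\ d<n}}\log\Norm{B_d}\ +\ O(\log n),
\]
where the error also absorbs $\sum_{p\mid n}2[K:\Q]\,\ord_p(n)\log p\le 2[K:\Q]\log n$. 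Substituting the height estimate and $\sum_{d\mid n,\,d<n}d^2\le n^2\sum_{k\ge 2}k^{-2}=\big(\tfrac{\pi^2}{6}-1\big)n^2$ (the number of divisors being $n^{o(1)}$, so the $O(1)$-per-term and $O(\log n)$ errors are $o(n^2)$) gives
\[
c\,n^2-o(n^2)\ \le\ \big(\tfrac{\pi^2}{6}-1\big)c\,n^2+o(n^2).
\]
Since $\tfrac{\pi^2}{6}-1<1$ and $c>0$, the coefficient of $n^2$ on the left strictly exceeds that on the right, so this fails once $n$ is large; hence only finitely many $n$ lack a primitive divisor.

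The step I expect to be the genuine obstacle is the lower bound $\log\Norm{B_n}\ge c\,n^2-o(n^2)$, equivalently the statement that $nP$ does not approach $O$ too rapidly at the archimedean places. This is ineffective and rests on Diophantine approximation — Siegel's theorem on integral points, or equivalently lower bounds for linear forms in elliptic logarithms — and it is the source of the ``all but finitely many'' in the conclusion. By contrast the reduction-theoretic inputs are routine, if slightly delicate, bookkeeping with the formal group, and the elementary fact $\sum_{k\ge2}k^{-2}<1$ is precisely the slack that makes the final inequality close.
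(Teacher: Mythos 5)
Your proof is correct and is essentially the classical Silverman/Cheon--Hahn argument; the paper does not reprove this cited theorem, but your strategy coincides with the four-step outline it records in Section \ref{structure} (rank of apparition and the divisibility property, formal-group control of valuations, height growth with the archimedean part handled by Siegel/Diophantine approximation, and the $\sum_{k\ge 2}k^{-2}<1$ slack in the final inequality). No genuine gap; the points you flag as delicate (the bad-prime bookkeeping and the ineffective archimedean lower bound) are exactly where the standard proof puts its weight.
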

	 The previous theorem was generalized by Streng in \cite{streng} to the case when $\OO$ is not $\Z$.
\begin{theorem}\cite[Principal case of the main theorem, Streng, 2008]{streng}\label{intstreng}
Let $E$ be an elliptic curve defined over a number field $K$ by a Weierstrass equation with coefficients in $\OO_K$. Take $P$ in $E(K)$. Let $\OO$ be a subring of $\End(E)$ and consider the sequence $\{B_\alpha(P,O)\}_{\alpha\in \OO}$. Suppose that $P$ is a non-torsion point. Then, for all but finitely many $\alpha\in \OO$, $B_\alpha(P,O)$ has a primitive divisor.
\end{theorem}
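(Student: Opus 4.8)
The plan is to adapt Silverman's argument for elliptic divisibility sequences, replacing the \emph{rank of apparition} of a prime (an integer, when $\OO=\Z$) by a \emph{rank of apparition ideal} of $\OO$, and then to run the resulting counting argument over a general number field. First I would establish the growth estimate
\[
\log N_{K/\Q}(B_\alpha)=2[K:\Q]\,\hat h(\alpha(P))+o(\Norm{\alpha})=2[K:\Q]\hat h(P)\,\Norm{\alpha}+o(\Norm{\alpha})
\]
as $\Norm{\alpha}\to\infty$, where $N_{K/\Q}$ is the absolute norm of an $\OO_K$-ideal. One decomposes the Weil height of $x(\alpha(P))$ into local terms: the non-archimedean part is $\tfrac1{[K:\Q]}\log N_{K/\Q}(B_\alpha)$, the total is $2\hat h(\alpha(P))+O(1)$, and $\hat h(\alpha(P))=\deg(\alpha)\hat h(P)=\Norm{\alpha}\hat h(P)$ since $\hat h$ is a quadratic form and $\hat\alpha\alpha=\deg\alpha$; the leading constant is positive because $P$ is non-torsion. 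The point that needs care is that the archimedean contribution $\sum_{v\mid\infty}n_v\log^{+}|x(\alpha(P))|_v$ is $o(\Norm{\alpha})$, in fact $O(\log\Norm{\alpha})$; this follows from a lower bound $\mathrm{dist}_v(\alpha(P),O)\gg\Norm{\alpha}^{-\kappa}$ at each archimedean place $v$, i.e. a lower bound for linear forms in elliptic logarithms, already implicit over $\Q$ in Silverman's and Cheon--Hahn's proofs.

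Next, for all but finitely many primes $\p$ of $\OO_K$ (those of good reduction, after a finite further exclusion) the reduction map identifies $\{\alpha\in\OO:\p\mid B_\alpha\}$ with the annihilator $\mathfrak a_\p:=\Ann_\OO(\tilde P)$, an ideal of $\OO$, so that $\p\mid B_\alpha$ if and only if $\mathfrak a_\p\mid\alpha\OO$. Hence, if $B_\alpha$ has \emph{no} primitive divisor, then every $\p\mid B_\alpha$ has $\mathfrak a_\p$ a \emph{proper} divisor of $\alpha\OO$ which moreover contains a nonzero endomorphism of degree $<\Norm{\alpha}$ (otherwise $\p$ would itself be primitive). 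For the exponents I would use the formal group $\widehat E$ over $\OO_{K,\p}$ together with the $\OO$-action: taking $\beta\in\mathfrak a_\p$ that generates $\mathfrak a_\p$ locally at $\p$, one gets $\ord_\p(B_\alpha)=\ord_\p(B_\beta)+(\text{a correction bounded by the }\p\text{-adic size of }\alpha/\beta)$, and the sum of these corrections over $\p\mid B_\alpha$, together with the contribution of the bad-reduction primes, is $O(\log\Norm{\alpha})$. This yields a factorisation $B_\alpha=\prod_{\mathfrak b\mid\alpha\OO}D_{\mathfrak b}$, with $D_{\mathfrak b}$ collecting the primes whose rank of apparition is exactly $\mathfrak b$; the ``primitive part'' $D_{\alpha\OO}$ is nontrivial precisely when $B_\alpha$ has a primitive divisor.

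Then, by Möbius inversion over the divisors of $\alpha\OO$ (here Minkowski's theorem and the finiteness of the class number of $\OO$ enter, to supply small generators of the non-principal divisor ideals), combined with the growth estimate and the multiplicity bound, one obtains
\[
\log N_{K/\Q}(D_{\mathfrak b})=2[K:\Q]\hat h(P)\,\varphi(\mathfrak b)+O\bigl(2^{\omega(\mathfrak b)}\log\Norm{\alpha}\bigr),
\]
where $\varphi$ is the ideal Euler function of $\OO$ and $\omega$ counts distinct prime divisors; the identity $\sum_{\mathfrak b\mid\alpha\OO}\varphi(\mathfrak b)=\Norm{\alpha}$ recovers the total. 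If $B_\alpha$ has no primitive divisor, then $D_{\alpha\OO}=\OO_K$, forcing $\varphi(\alpha\OO)=O\bigl(2^{\omega(\alpha\OO)}\log\Norm{\alpha}\bigr)=\Norm{\alpha}^{o(1)}$; but $\varphi(\alpha\OO)=\Norm{\alpha}\prod_{\q\mid\alpha\OO}\bigl(1-\Norm{\q}^{-1}\bigr)\gg\Norm{\alpha}/\log\log\Norm{\alpha}$ by Mertens' theorem. This is impossible once $\Norm{\alpha}$ is large, so all but finitely many ideals $\alpha\OO$ — hence, $\OO^{\times}$ being finite, all but finitely many $\alpha$ — admit a primitive divisor.

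The step I expect to be the main obstacle is making the final numerical comparison go through. Over $\Z$ the degree of $[n]$ is $n^2$, so the relevant totient is the Jordan totient $J_2(n)\ge n^2/\zeta(2)$, a clean uniform lower bound, and a crude control of the archimedean terms suffices; over an order $\OO$ the degree equals the ideal norm, the totient is the ordinary $\varphi$, and one has only the Mertens-type bound $\varphi(\mathfrak n)\gg\Norm{\mathfrak n}/\log\log\Norm{\mathfrak n}$. One must therefore keep \emph{every} error term — the $2^{\omega}$ factors from Möbius inversion, the multiplicity corrections, the bad primes, and above all the archimedean contribution of Step~1 — strictly below $\varphi(\alpha\OO)$, which is exactly what forces proving the sharp $O(\log\Norm{\alpha})$ form of the archimedean estimate rather than a mere $o(\Norm{\alpha})$. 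A further complication, when $\OO$ is a non-maximal order, is that its ideals need not be invertible and unique factorisation of ideals fails; one handles this by working with the invertible part of each $\mathfrak a_\p$, or by passing to the maximal order while tracking the conductor.
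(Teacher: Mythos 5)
Your outline is the Silverman--Streng strategy that the paper itself summarises in Section \ref{structure} and implements (for the shifted case) in Sections \ref{secgoodprime}--\ref{secproof}: rank-of-apparition ideals, formal-group control of the exponents, the height asymptotic with sub-polynomial archimedean error, M\"obius inversion over the divisors of $(\alpha)$, and Mertens' theorem for the final contradiction. Almost all of this is right in outline, and your emphasis on keeping every error term below the totient is well placed (although the $O((\log\Norm{\alpha})^2)$ bound that the elliptic-logarithm estimate actually yields, cf.\ Lemma \ref{strengcor}, already suffices; the sharp $O(\log\Norm{\alpha})$ is not needed).

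The one genuine gap is the handling of non-principal ideals, which you relegate to a parenthesis about Minkowski's theorem and to the non-maximal-order remark at the end. The problem arises already for maximal orders with nontrivial class group (e.g.\ $\OO=\Z[\sqrt{-14}]$ in the paper's example): a rank-of-apparition ideal $\mathfrak{a}_\p$ properly dividing $(\alpha)$ need not contain any $\beta$ with $\beta\mid\alpha$ in $\OO$, so there is no element through which to factor $[\alpha]$, and the formal-group step ``$\ord_\p(B_\alpha)=\ord_\p(B_\beta)+\text{correction}$'' has nothing to apply to. A Minkowski-small generator $\beta\in\mathfrak{a}_\p$ does not repair this: the primes dividing $B_\beta$ are those with $\mathfrak{a}_\p\mid(\beta)$ rather than those with $\mathfrak{a}_\p\mid\mathfrak{c}$ for the divisor $\mathfrak{c}$ you are inverting over, and $\hat h(\beta P)=\Norm{\beta}\hat h(P)$ differs from $\Norm{\mathfrak{c}}\hat h(P)$ by a multiplicative constant, so the M\"obius inversion no longer telescopes to the ideal Euler function with an admissible error. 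The missing ingredient is precisely the construction the paper recalls in Section \ref{prel}: for each ideal $I$ one takes the quotient curve $E_I=E/E[I]$ and the isogeny $\varphi_I$ with kernel $E[I]$, defines $\B_I$ as the denominator of $x(\varphi_I(P))$, and uses $\hat h(\varphi_I(P))=\Norm{I}\,\hat h(P)$ together with the formal-group lemma for the maps $\varphi_I$ (Lemma \ref{315a}). With that ideal-indexed sequence in place, your factorisation into $D_{\mathfrak b}$, the inversion, and the Mertens comparison go through as you describe; without it the argument is only complete when every divisor of $(\alpha)$ is principal.
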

The case when $Q\neq O$ has been introduced in \cite{shparlinski}.
\begin{theorem}\cite[Theorem 1.1, Everest and Shparlinski, 2005]{shparlinski}
Let $E$ be an elliptic curve defined over a number field $K$ by a Weierstrass equation with coefficients in $\OO_K$. Take $P$ and $Q$ two points on $E(K)$. Consider the sequence $\{B_n(P,Q)\}_{n\in \Z}$. Suppose that $P$ is a non-torsion point. Let $\omega_K(I)$ be the function that counts the number of distinct prime divisors of the $\OO_K$-ideal $I$. Then, for all $M$ large enough,
\[
\omega_K\left(\prod_{i=1}^M(B_k(P,Q))\right)\gg M.
\]
\end{theorem}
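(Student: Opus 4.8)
The plan is to bound below $\omega:=\omega_K\bigl(\prod_{n=1}^{M}B_n(P,Q)\bigr)$, which is the cardinality of the set $S_M$ of prime ideals $\p$ of $\OO_K$ that divide $B_n(P,Q)$ for some $1\le n\le M$. (At most one index $n$ has $B_n(P,Q)=0$, coming from a solution of $nP+Q=O$; since $P$ is non-torsion this can be ignored.) The whole argument compares two quantities: the total size $\Sigma_M:=\sum_{n=1}^{M}\log\Norm{B_n(P,Q)}$ and the per-prime contributions $C_\p:=\log\Norm{\p}\sum_{n=1}^{M}v_\p\bigl(B_n(P,Q)\bigr)$, linked by $\Sigma_M=\sum_{\p\in S_M}C_\p$. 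If $\Sigma_M$ is of order $M^{3}$ while no single prime can absorb much of it, there must be $\gg M$ primes.

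First I would show $\Sigma_M\gg M^{3}$. Splitting the logarithmic Weil height of $x(nP+Q)$ into its finite and archimedean parts gives $\log\Norm{B_n(P,Q)}=2\hat h_K(nP+Q)+O(1)-\Lambda_n$, where $\hat h_K$ is the $K$-normalised canonical height and $\Lambda_n\ge0$ is the archimedean part. By the quasi-parallelogram law $\hat h_K(nP+Q)=\hat h_K(P)n^{2}+O(n)$, and $\hat h_K(P)>0$ since $P$ is non-torsion, so $\sum_{n\le M}2\hat h_K(nP+Q)=\tfrac23\hat h_K(P)M^{3}+O(M^{2})$. For the error term I would use that $\{nP+Q\}_{n\ge1}$ equidistributes in the archimedean points of $E$ (valid because $P$ has infinite order) together with the integrability of $\log^{+}|x(\cdot)|_v$ (only a logarithmic singularity at $O$), which yields $\sum_{n\le M}\Lambda_n=O(M)=o(M^{3})$. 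Hence $\Sigma_M\asymp M^{3}$; I also record the termwise bounds $\log\Norm{B_n(P,Q)}\le[K:\Q]h(x(nP+Q))\ll n^{2}$ and likewise $\log\Norm{B_m(P,O)}\ll m^{2}$.

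The technical heart is a per-prime estimate. There are only finitely many ``exceptional'' primes — those of bad reduction and those of residue characteristic at most $[K:\Q]+1$ — and for each of them $v_\p\bigl(B_n(P,Q)\bigr)\ll_\p\log n$, so together they contribute $O(M\log M)=o(M^{3})$ to $\Sigma_M$. A non-exceptional $\p$ with $\tilde Q\notin\langle\tilde P\rangle$ in $E(\F_\p)$ never divides any $B_n(P,Q)$, so lies outside $S_M$. For the remaining primes, let $r_\p$ be the order of $\tilde P$ in $E(\F_\p)$; then $\p\mid B_n(P,Q)$ precisely for $n$ in one residue class $n_\p\pmod{r_\p}$ with $1\le n_\p\le r_\p$, and for such $n$ one has $v_\p\bigl(B_n(P,Q)\bigr)=2\,v_\p\bigl(\hat t_\p(nP+Q)\bigr)$ for the formal parameter $\hat t_\p$. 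Writing $n=n_\p+kr_\p$, the points $(n_\p P+Q)+k\,(r_\p P)$ run through an arithmetic progression in the formal group $\hat E$ over $\OO_{K_\p}$; using $v_\p\bigl(\hat t_\p([m](r_\p P))\bigr)=v_\p(m)+b_\p$ with $b_\p:=v_\p\bigl(\hat t_\p(r_\p P)\bigr)\ge1$, and bounding the level sets $\{k:v_\p(\hat t_\p(\cdot))\ge N\}$ (each lying in a single residue class modulo $p^{\lceil(N-b_\p)/e_\p\rceil}$, with $e_\p$ the ramification index), one gets
\[
\sum_{n\le M}v_\p\bigl(B_n(P,Q)\bigr)\;\ll\;V_\p+\frac{M}{r_\p}(b_\p+1),\qquad V_\p:=\max_{1\le n\le M}v_\p\bigl(\hat t_\p(nP+Q)\bigr).
\]
To keep the auxiliary parameters in hand I would note: $r_\p P\in E_1(K_\p)\setminus\{O\}$, so $v_\p\bigl(B_{r_\p}(P,O)\bigr)=2b_\p$ and therefore $b_\p\log\Norm{\p}\le\tfrac12\log\Norm{B_{r_\p}(P,O)}\ll r_\p^{2}$; and if $r_\p$ is at most a fixed constant $R^{*}$ then $n_\p\le r_\p\le R^{*}$, so $\p$ divides one of the finitely many fixed ideals $B_1(P,Q),\dots,B_{R^{*}}(P,Q)$, hence has bounded norm, and the finitely many such primes contribute $o(M^{3})$ to $\Sigma_M$.

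Finally comes a dichotomy. After discarding the $o(M^{3})$ contributions, $\Sigma_M\le o(M^{3})+C\sum_{\p}V_\p\log\Norm{\p}+C\,M\sum_{\p}\frac{(b_\p+1)\log\Norm{\p}}{r_\p}$ with a constant $C$, the sums over the non-exceptional primes of $S_M$ with $R^{*}<r_\p\le M$. Fix $c_0$ with $0<c_0<1/(2C)$. If $\sum_{\p\in S_M}V_\p\log\Norm{\p}\ge c_0\Sigma_M$, then since $V_\p\log\Norm{\p}\le\tfrac12\max_{n\le M}\log\Norm{B_n(P,Q)}\ll M^{2}$ while $\Sigma_M\gg M^{3}$, we already get $|S_M|\gg M$. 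Otherwise the displayed inequality gives $\tfrac12\Sigma_M-o(M^{3})\le C\,M\sum_{\p}\frac{(b_\p+1)\log\Norm{\p}}{r_\p}$, forcing that sum to be $\gg M^{2}$; but each summand is $\ll r_\p\ll M$ — using $\log\Norm{\p}\le\log\Norm{B_{n_\p}(P,Q)}\ll n_\p^{2}\le r_\p^{2}$ and $b_\p\log\Norm{\p}\ll r_\p^{2}$ — so there are $\gg M$ primes in the sum. Either way $\omega=|S_M|\gg M$. The hard part is the per-prime bound: one must control the accumulation of $\p$-adic valuation along the progression $\{n_\p+kr_\p\}$, which the formal-group level-set count handles, and keep $V_\p$ and $b_\p$ small, which works because they are essentially the $\p$-parts of $\log\Norm{B_{n^{*}_\p}(P,Q)}$ and $\log\Norm{B_{r_\p}(P,O)}$ for appropriate indices, so the quadratic growth of canonical heights applies.
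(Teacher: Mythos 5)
A preliminary remark: the paper does not prove this statement --- it is quoted verbatim from Everest--Shparlinski as background in the introduction --- so there is no in-paper proof to compare yours against; I can only assess the proposal on its own terms.

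On those terms, your ``total mass versus per-prime multiplicity'' strategy is the standard route to this kind of bound and the skeleton is sound: the identity $\Sigma_M=\sum_{\p\in S_M}C_\p$, the formal-group level-set estimate $\sum_{n\le M}\nu_\p(B_n)\ll V_\p+(M/r_\p)(b_\p+1)$ for non-exceptional $\p$, the calibrations $V_\p\log\Norm{\p}\le\tfrac12\max_{n\le M}\log\Norm{B_n}\ll M^2$ and $b_\p\log\Norm{\p}\ll r_\p^2$, and the closing dichotomy do combine to give $|S_M|\gg M$. Three points need to be made explicit before this is a proof. (i) The claim $\sum_{n\le M}\Lambda_n=O(M)$ does not follow from equidistribution alone: $\log^+|x(\cdot)|_v$ is unbounded near $O$, and Weyl equidistribution only controls bounded test functions, while a pointwise bound $\Lambda_n\ll\log n$ would need a David/Baker-type lower bound for linear forms in elliptic logarithms. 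What you actually need is only $\sum_{n\le M}\Lambda_n=o(M^3)$, and that does follow from your stated inputs by truncation (equidistribution applied to $\min(\Lambda,T)$, the trivial bound $\Lambda_n\ll n^2$, and the fact that $\{n\le M:\Lambda_n>T\}$ has density $\ll e^{-cT}$), or alternatively from Siegel's theorem $\Lambda_n=o(n^2)$. (ii) In the second branch of the dichotomy you restrict to $r_\p\le M$ without justification; you should say that a non-exceptional prime with $r_\p>M$ meets at most one index $n\le M$, so its whole contribution is $2V_\p\log\Norm{\p}$ and belongs entirely to the first sum. (iii) For the finitely many exceptional primes the crude bound $\nu_\p(B_n)\log\Norm{\p}\le\log\Norm{B_n}\ll M^2$ gives only $O(M^3)$ per prime, which is not negligible; you do assert the needed $\nu_\p(B_n)\ll_\p\log n$, but that assertion itself requires the same coset-plus-formal-group argument (applied on the N\'eron model or after a base change restoring good reduction), so it should be proved rather than stated. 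With these three points filled in, the argument is complete and correct.
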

This theorem suggests that it should be possible to generalize the results stated before on the primitive divisors also in the case when $Q\neq O$.

The case when $Q\neq O$ is a torsion point of order prime, has been studied in \cite{shifted}.
\begin{theorem}\cite[Theorem 1.3, V., 2020]{shifted}\label{intrshift}
		Let $E$ be an elliptic curve defined over a number field $K$ by a Weierstrass equation with coefficients in $\OO_K$. Take $P$ and $Q \in E(K)$. Consider the sequence $\{B_n(P,Q)\}_{n\in \N}$. Suppose that $P$ is a non-torsion point and that $Q$ is a torsion point of prime order. Then, for all but finitely many $n\in \N$, $B_n(P,Q)$ has a primitive divisor.
\end{theorem}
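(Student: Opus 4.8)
The plan is to argue by contradiction, comparing two estimates for the logarithmic norm $\log\Norm{B_n(P,Q)}$. In one direction there is the elementary bound, valid for every fixed integer $j$,
\[
\log\Norm{B_n(P,jQ)}\ \le\ [K:\Q]\bigl(\hat{h}(nP+jQ)+O(1)\bigr)\ =\ [K:\Q]\,\hat{h}(P)\,n^2+O(1),
\]
since $\log\Norm{B_n(P,jQ)}$ is the non-archimedean part of $[K:\Q]$ times the absolute Weil height of $x(nP+jQ)$ and $\hat{h}(nP+jQ)=n^2\hat{h}(P)$ because $jQ$ is torsion. For the reverse inequality I would bound the archimedean local heights $\widehat{\lambda}_v(nP+jQ)$ by $O(\log n)$, using a lower bound for linear forms in elliptic logarithms (the one genuinely transcendental input), obtaining $\log\Norm{B_n(P,Q)}\ge[K:\Q]\hat{h}(P)n^2-o(n^2)$; note $\hat{h}(P)>0$ as $P$ is non-torsion.

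Next I would set up the reduction theory. Fix a finite set $S$ of primes of $\OO_K$ containing those of bad reduction, those above $\ell$, those dividing the denominators of $x(P)$ and $x(Q)$, and those where the model is non-minimal; for $\p\notin S$ one has $\p\mid B_n(P,Q)$ precisely when $n\tilde{P}=-\tilde{Q}$ in $\tilde{E}(\F_\p)$. Fix such a $\p$. Since $\tilde{Q}$ has order $\ell$, the order $d_\p$ of $\tilde{P}$ is divisible by $\ell$; put $e_\p:=d_\p/\ell$. Then $e_\p\tilde{P}$ has order $\ell$, and since $\langle e_\p\tilde{P}\rangle=\langle n\tilde{P}\rangle=\langle\tilde{Q}\rangle$ it generates $\langle\tilde{Q}\rangle$; writing $e_\p\tilde{P}=c\tilde{Q}$ one reads off $e_\p\mid n$ and $(n/e_\p)c\equiv-1\pmod\ell$, in particular $\ell\nmid n/e_\p$. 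Suppose now $B_n(P,Q)$ has no primitive divisor; then for each $\p\mid B_n(P,Q)$ there is $m$ with $1\le m<n$ and $\p\mid B_m(P,Q)$, so $m\tilde{P}=-\tilde{Q}=n\tilde{P}$, whence $d_\p\mid n-m$ and therefore $e_\p\mid m$. Thus $e_\p<n$, and since $n/e_\p>1$ and $\ell\nmid n/e_\p$ there is a prime $p_\p\mid n/e_\p$ with $p_\p\neq\ell$, so $e_\p\mid n/p_\p$.

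The core observation is that each such $\p$ is then ``absorbed'' by a single earlier term of exactly one shifted sequence, the shift being determined by $p_\p$ alone. Indeed, with $j_0:=p_\p^{-1}\bmod\ell\in\{1,\dots,\ell-1\}$, the computation $(n/p_\p)\tilde{P}=\bigl(n/(p_\p e_\p)\bigr)c\,\tilde{Q}=-p_\p^{-1}\tilde{Q}$ in $\tilde{E}(\F_\p)$ shows $\p\mid B_{n/p_\p}(P,j_0Q)$; and the identity $nP+Q=[p_\p]\bigl((n/p_\p)P+j_0Q\bigr)$ in $E(K)$, together with the fact that $[p_\p]$ acts on the formal group at $\p$ either as an automorphism (if $p_\p$ is prime to the residue characteristic) or by raising depth by $v_{\p}(p_\p)$, gives $v_{\p}(B_n(P,Q))\le v_{\p}(B_{n/p_\p}(P,j_0Q))+O(v_{\p}(n))$. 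Summing over $\p\mid B_n(P,Q)$, bounding the contribution of $\p\in S$ by $O(\log n)$ (each $v_{\p}(B_n(P,Q))$ being $O_{\p}(\log n)$ by a $\p$-adic estimate for the depth of $nP+Q$, via the formal group, or the Néron model and Tate parametrization at bad primes), and grouping the remaining primes by the value of $p_\p$, I obtain
\[
\log\Norm{B_n(P,Q)}\ \le\ \sum_{\substack{p\mid n\\ p\neq\ell}}\log\Norm{B_{n/p}\bigl(P,(p^{-1}\bmod\ell)\,Q\bigr)}+O(\log n)\ \le\ [K:\Q]\,\hat{h}(P)\,n^2\sum_{p\mid n}\frac1{p^2}+o(n^2),
\]
using the easy upper bound of the first paragraph for each term. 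Since $\sum_{p}p^{-2}<1$, this contradicts the lower bound once $n$ is large, so all but finitely many $B_n(P,Q)$ have a primitive divisor.

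The hard part, I expect, is precisely the bookkeeping behind the last display: one must verify that for each prime $p\mid n$ all the primes $\p$ with $p_\p=p$ really feed into the \emph{same} shifted term $B_{n/p}(P,j_0(p)Q)$, so that the naive loss of a factor $\ell$ — which would occur if one only recorded that $\p$ divides some $B_m(P,O)$ with $m\mid n$, and which would render the estimate vacuous — does not happen. This relies on the order-and-subgroup analysis in $\tilde{E}(\F_\p)$ above and uses in an essential way that $\ell$ is prime. The remaining ingredients (the archimedean lower bound from transcendence theory, and the control of $v_{\p}(B_n(P,Q))$ at the finitely many primes of $S$) are standard, though they must be carried out with care.
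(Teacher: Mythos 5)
Your argument is correct in outline, but it takes a genuinely different route from this paper. The paper never reproves the cited theorem directly: it deduces it (together with the stronger Theorem \ref{Thm3}, where the order of $Q$ need not be prime) from Theorem \ref{Thm1}, whose proof constructs an auxiliary strong divisibility sequence $\B_I$, the denominator of $x(\varphi_I(P+Q'))$ for $I\in\I$, with a carefully chosen torsion shift $Q'=qQ/\alpha$; the structural inputs are that $I_\p(P)=\Ann_\p(P)/\s$ is the maximal element of $\I_\p$ (Lemma \ref{maxideal}) and the valuation transfer of Lemma \ref{2nu}, after which a M\"obius inclusion--exclusion and Mertens' theorem close the argument. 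You instead exploit the primality of the order $\ell$ of $Q$ head-on: for each non-primitive good divisor $\p$ of $B_n(P,Q)$ you produce a single rational prime $p\mid n$ with $p\neq\ell$ and a single shift $j_0=p^{-1}\bmod\ell$ depending only on $p$, such that $\p\mid B_{n/p}(P,j_0Q)$ and $nP+Q=[p]\bigl((n/p)P+j_0Q\bigr)$ holds exactly in $E(K)$; then the crude Silverman-type estimate $\sum_{p\mid n}(n/p)^2=n^2\sum_{p\mid n}p^{-2}<n^2\sum_p p^{-2}<n^2/2$ already contradicts the lower bound, with no M\"obius inversion or Mertens needed. Your subgroup analysis in the reduced curve (uniqueness of the order-$\ell$ subgroup of the cyclic group generated by the reduction of $P$, the identities $\gcd(d_\p,n)=e_\p$ and $\ell\nmid n/e_\p$) is what forces $j_0$ to depend on $p$ alone rather than on $\p$, and I have checked that it is sound; this is precisely where primality of $\ell$ is indispensable, and why your argument does not extend to Theorem \ref{Thm3} or Theorem \ref{Thm1}, whereas the paper's does. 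What each approach buys: yours is more elementary (no isogeny decomposition, no class group, all $\varphi_{(d)}$ are just $[d]$) and gives a cleaner numerical constant; the paper's is uniform in the Dedekind ring $\OO$ and in the order of $Q$.

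Two points should be made precise in a full write-up. First, the valuation transfer $v_\p(B_n(P,Q))=v_\p(B_{n/p}(P,j_0Q))+2v_\p(p)$ requires the formal-group statement analogous to Lemma \ref{315a}; it holds whenever $\p$ is unramified over its residue characteristic, so the finitely many ramified primes must also be placed in $S$. Second, the lower bound $\log\Norm{B_n(P,Q)}\geq 2[K:\Q]\hat{h}(P)n^2-o(n^2)$ needs Siegel's theorem (or David's effective bounds) to control the archimedean local heights; also note that the paper normalizes $\hat{h}$ so that $h=2\hat{h}+O(1)$, whereas you use the convention without the factor $2$ --- harmless, since you apply it consistently on both sides of the comparison.
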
 
The study of the sequences $\{B_n(P,Q)\}_{n\in \N}$ is related to a Lang-Trotter conjecture, as is shown in \cite[Section 4]{shifted}. Indeed, the study of the divisors of $B_n(P,Q)$ tells us some information on the orbit of $P$ and $Q$ in the curve reduced modulo every prime.

In this paper, we will generalize the work of Streng in \cite{streng} to the case when $Q\neq O$. We were not able to generalize the result of Streng for $P$ and $Q$ generic and we will need to add some hypotheses on the points. Moreover, we will generalize Theorem \ref{intrshift}. 

Indeed, we prove the following theorems.

	\begin{theorem}\label{Thm1}
		Let $E$ be an elliptic curve defined over a number field $K$ by a Weierstrass equation with coefficients in $\OO_K$. Take $P$ and $Q \in E(K)$. Let $\OO$ be a subring of $\End(E)$. Assume that $\OO$ is a Dedekind domain and consider the sequence $\{B_\alpha(P,Q)\}_{\alpha\in \OO}$. Suppose that $Q$ is a torsion point and that $P$ is a non-torsion point. Then, for all but finitely many $\alpha\in \OO$, $B_\alpha(P,Q)$ has a primitive divisor.
\end{theorem}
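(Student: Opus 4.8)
\section*{Proof strategy}

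The plan is to eliminate $Q$ by an isogeny and then quote Streng's Theorem~\ref{intstreng}. If $Q=O$ there is nothing to prove, so assume $Q\neq O$ and let $N$ be the order of $Q$. I would set $G:=\OO\cdot Q=\{\alpha(Q):\alpha\in\OO\}$. Since $[N]$ is central in $\End(E)$ the subgroup $E[N]$ is $\OO$-stable, so $G\subseteq E[N]$ is finite; it is a subgroup, it is $\OO$-stable by construction, and it is defined over $K$ because $Q\in E(K)$ and the endomorphisms in $\OO$ are defined over $K$ (this is implicit in $B_\alpha(P,Q)$ being an $\OO_K$-ideal; otherwise replace $K$ by a finite extension, which changes neither the hypotheses nor the conclusion, since a primitive divisor upstairs lies over a primitive divisor downstairs). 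Let $\phi\colon E\to E':=E/G$ be the quotient isogeny, a separable $K$-isogeny, and fix a Weierstrass model of $E'$ over $\OO_K$. For every $\alpha\in\OO$ one has $\alpha(G)\subseteq G$, so $\alpha$ descends to $\alpha'\in\End(E')$ with $\phi\circ\alpha=\alpha'\circ\phi$; the map $\alpha\mapsto\alpha'$ is an injective ring homomorphism ($\phi\circ\alpha=0\Rightarrow\alpha=0$) whose image $\OO'\subseteq\End(E')$ is a Dedekind domain isomorphic to $\OO$, and $\Norm{\alpha'}=\deg\alpha'=\deg\alpha=\Norm{\alpha}$. Setting $P':=\phi(P)$, which is non-torsion because $\phi$ is an isogeny, and using $\phi(Q)=O$, I get $\phi(\alpha(P)+Q)=\phi(\alpha(P))=\alpha'(P')$ for all $\alpha\in\OO$.

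The technical core is a local comparison of the two denominator sequences. Let $S$ be the finite set of primes $\p$ of $\OO_K$ such that $E$ or $E'$ has bad reduction at $\p$, or the chosen models are not minimal at $\p$, or the residue characteristic of $\p$ divides $\deg\phi$, or $Q$ is not $\p$-integral. For $\p\notin S$, $\phi$ extends to an isogeny of elliptic schemes over $\OO_{K,\p}$ whose reduction has degree prime to the residue characteristic, hence is separable, hence étale, hence an isomorphism on formal groups; thus $\phi$ induces isomorphisms $E_m(K_\p)\xrightarrow{\ \sim\ }E'_m(K_\p)$ of all higher kernels of reduction. Since $\phi(\alpha(P)+Q)=\alpha'(P')$ and, for an integral model with good reduction, a point of the formal group satisfies $v_\p(x)=-2\,v_\p(t)$, this gives
\[
v_\p\big(B_\alpha(P,Q)\big)=v_\p\big(B_{\alpha'}(P',O)\big)\qquad\text{for all }\alpha\in\OO\text{ and all }\p\notin S .
\]
In particular the sequences $\{B_\alpha(P,Q)\}_{\alpha\in\OO}$ and $\{B_{\alpha'}(P',O)\}_{\alpha'\in\OO'}$ have the same prime divisors, with the same multiplicities, away from the fixed finite set $S$.

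Finally I would transfer primitive divisors. By Theorem~\ref{intstreng} applied to $(E',P',\OO')$, all but finitely many $B_{\alpha'}(P',O)$ have a primitive divisor, and via the norm-preserving bijection $\alpha\leftrightarrow\alpha'$ this says: for all but finitely many $\alpha\in\OO$, $B_{\alpha'}(P',O)$ has a primitive divisor. Next, $\p\mid B_{\beta'}(P',O)$ forces $\beta'$ into the subgroup $\{\delta\in\OO':\delta(P')\in E'_1(K_\p)\}$ of $\OO'$, whose index in $\OO'$ is bounded in terms of $\#\widetilde{E'}(\F_\p)$, hence in terms of $\Norm{\p}$; consequently the least norm of a nonzero element of this subgroup is bounded in terms of $\Norm{\p}$, so a prime of the finite set $S$ can be a primitive divisor of $B_{\alpha'}(P',O)$ only for $\Norm{\alpha'}$ below an absolute bound. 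Therefore, for $\Norm{\alpha}$ large enough, $B_{\alpha'}(P',O)$ has a primitive divisor $\p\notin S$; for such $\alpha$ and $\p$ the displayed equality gives $\p\mid B_\alpha(P,Q)$, while $\p\nmid B_0(P,Q)$ since $\p\notin S$, and for every $\beta$ with $0<\Norm{\beta}<\Norm{\alpha}$ one has $\p\nmid B_\beta(P,Q)$, for otherwise $\p\mid B_{\beta'}(P',O)$ with $0<\Norm{\beta'}<\Norm{\alpha'}$ and $B_{\beta'}(P',O)\neq0$ (note $B_\gamma(P,Q)\neq0$ and $B_{\gamma'}(P',O)\neq0$ for all $\gamma\neq0$, since $\gamma(P)$ is never a nonzero torsion point when $P$ is non-torsion), contradicting primitivity. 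Hence $\p$ is a primitive divisor of $B_\alpha(P,Q)$, which proves the theorem.

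The main obstacle I expect is the local comparison in the second paragraph: one must check carefully that an isogeny which is étale with good reduction at $\p$ identifies the kernels of reduction and therefore the $\p$-adic valuations of the two denominator ideals, and one must put into $S$ exactly the primes where this breaks down — in particular the primes dividing $\deg\phi$ and those where $Q$ is not integral, at which the two sequences genuinely differ. The point that makes this harmless is the (cheap) observation that primitive divisors of $B_{\alpha'}(P',O)$ eventually avoid any prescribed finite set of primes. An alternative, less clean route would be to redo Streng's argument directly with the extra point $Q$, replacing the divisibility relation $B_\beta\mid B_\alpha$ by the coset description $\p\mid B_\alpha(P,Q)\iff\alpha\in\alpha_\p+\mathfrak n_\p$ and bounding the non-primitive part by hand; the isogeny reduction above avoids that bookkeeping entirely.
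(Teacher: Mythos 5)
Your reduction to Streng's Theorem \ref{intstreng} has a genuine gap, and it is essentially the same error that the paper isolates in King's thesis (see the second Remark after Theorem \ref{Thm3} in the introduction). The problem is the claimed local identity $\nu_\p\bigl(B_\alpha(P,Q)\bigr)=\nu_\p\bigl(B_{\alpha'}(P',O)\bigr)$ for $\p\notin S$. Write $R=\alpha(P)+Q$, so that $\alpha'(P')=\phi(R)$. For a good prime $\p$ with residue characteristic prime to $\deg\phi$, the isogeny $\phi$ does induce an isomorphism of formal groups, so the valuations of the denominators of $x(R)$ and $x(\phi(R))$ agree \emph{when $R$ already reduces to the identity}. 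But $\phi(R)\equiv O\pmod{\p}$ only says that the reduction of $R$ lies in $\ker\tilde{\phi}=\tilde{G}$, a group with $\deg\phi>1$ elements; it does not force $R\equiv O\pmod{\p}$. Hence only the inequality $\nu_\p\bigl(B_\alpha(P,Q)\bigr)\le\nu_\p\bigl(B_{\alpha'}(P',O)\bigr)$ holds, and it is strict (left side $0$, right side positive) whenever $\alpha(P)+Q$ reduces to a nonzero point of $G$. Concretely, take $\OO=\Z$ and $Q$ of order $3$, so $G=\langle Q\rangle$: a primitive divisor $\p$ of $B_{n'}(P',O)$ tells you only that $nP\equiv jQ\pmod{\p}$ for some $j\in\{0,1,2\}$, hence $nP+Q\equiv(j+1)Q\pmod{\p}$, which is $O$ only if $j=2$. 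So the transfer step ``$\p$ primitive for $B_{\alpha'}(P',O)$ implies $\p\mid B_\alpha(P,Q)$'' fails, and the whole reduction collapses. Determining \emph{which} element of $G$ the point $\alpha(P)$ hits modulo $\p$ is the actual content of the theorem, not a bookkeeping detail.

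For comparison, the paper does not quotient by $G$. It keeps the annihilator ideal $\Ann_\p(P)$, shows that for a good non-primitive divisor $\p$ the ideal $I_\p(P)=\Ann_\p(P)/\s$ is an integral \emph{proper} divisor of $(\alpha)$ with $(\alpha)I_\p(P)^{-1}$ coprime to $\s$, and then builds an auxiliary strong divisibility sequence $\B_I=$ denominator of $x(\varphi_I(P+Q'))$, where $Q'=qQ/\alpha$ and $q$ is chosen to lie in every relevant complementary ideal and to satisfy $q\equiv 1\bmod\s$. That construction is precisely what replaces the divisibility structure you were hoping to import wholesale from Streng, and it is followed by the same M\"obius/Mertens endgame. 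Your closing remark already sketches this ``less clean route''; it is in fact the necessary one.
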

Then, in Section \ref{f=g}, we will generalize this result.
\begin{theorem}\label{Thm2}
		Let $E$ be an elliptic curve defined over a number field $K$ by a Weierstrass equation with coefficients in $\OO_K$. Take $P$ and $Q \in E(K)$. Let $\OO$ be a subring of $\End(E)$. Assume that $\OO$ is a Dedekind domain and consider the sequence $\{B_\alpha(P,Q)\}_{\alpha\in \OO}$. Suppose that $P$ is a non-torsion point and that there exist $g\neq 0$ and $f$ in $\OO$ so that $f(P)=g(Q)$. Then, for all but finitely many $\alpha\in \OO$, $B_\alpha(P,Q)$ has a primitive divisor.
\end{theorem}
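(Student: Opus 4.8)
The plan is to reduce the general case $f(P)=g(Q)$ to Theorem \ref{Thm1}, where $Q$ is torsion, by passing to an isogenous curve on which the shifting point becomes torsion. First I would consider the isogeny $\phi=g\colon E\to E'$ (or work with $E'=E/\ker g$ if one prefers to stay with genuine isogenies of curves), and observe that $g(Q)=f(P)$ lies in the image of $[\deg g]$-torsion considerations; more precisely, on the target we want to compare $\alpha(P)+Q$ with a point whose shift is killed by a fixed endomorphism. The key algebraic identity is that, applying $g$, we get $g(\alpha(P)+Q)=\alpha(g(P))+g(Q)=\alpha(g(P))+f(P)=(\alpha g+f)(P)$, so the shifted sequence $\{B_\alpha(P,Q)\}$ is, up to the discrepancy introduced by $g$, controlled by the unshifted elliptic divisibility sequence $\{B_\beta(P,O)\}$ with $\beta$ ranging over the coset $\OO g+f$ inside $\OO$. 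Theorem \ref{intstreng} (Streng) gives primitive divisors for all but finitely many $\beta$ in that unshifted sequence, and one needs to transfer this back.

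The steps, in order, would be: (1) Set up the isogeny $g$ and the dual $\hat g$, recording that $\hat g\circ g=[\deg g]$ and analogously for any auxiliary endomorphism; fix a finite set $S$ of primes of $\OO_K$ containing those of bad reduction for $E$, those dividing $\deg g$ (equivalently dividing the conductor-type quantities attached to $g$ and $f$), and those at which the relevant points reduce badly. (2) Prove a comparison lemma: outside $S$, for a prime $\p$, one has $\p\mid B_\alpha(P,Q)$ if and only if $\alpha(P)+Q$ reduces to $O$ modulo $\p$, if and only if $(\alpha g+f)(P)$ reduces to $O$ modulo $\p$ after controlling the kernel of $\widetilde g$ on the reduction — this is where $g\neq 0$ is essential, since $\widetilde g$ is then an isogeny of reductions with bounded kernel. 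This identifies, up to the bounded set $S$ and up to a bounded multiplicative discrepancy, $B_\alpha(P,Q)$ with $B_{\alpha g+f}(P,O)$. (3) Apply Theorem \ref{intstreng} to $\{B_\beta(P,O)\}_{\beta\in\OO}$: all but finitely many $\beta$ have a primitive divisor. (4) Translate "primitive divisor for $\beta=\alpha g+f$" into "primitive divisor for $\alpha$ in the shifted sequence": here one must check that the ordering by norm is respected, i.e. $\Norm{\beta}$ is comparable to $\Norm{\alpha}$ up to a bounded factor (indeed $\Norm{\alpha g+f}$ differs from $\Norm{g}\Norm{\alpha}$ by lower-order terms), so that a prime primitive for $\beta$ among $\{\Norm{\beta'}<\Norm{\beta}\}$ remains primitive for $\alpha$ among $\{\Norm{\alpha'}<\Norm{\alpha}\}$ after discarding finitely many exceptions, and that primes in $S$ only spoil finitely many terms.

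The main obstacle I expect is step (2), the comparison between divisibility in the shifted sequence $\{B_\alpha(P,Q)\}$ and the unshifted one $\{B_{\alpha g+f}(P,O)\}$: one has to handle the difference between "$\p$ divides the denominator of $x(\alpha(P)+Q)$" and "$\p$ divides the denominator of $x((\alpha g+f)(P))$" precisely, including the local contributions coming from the formal group, from primes dividing $\deg g$, and from the difference between $\hat g(g(R))=[\deg g]R$ and $R$ itself. A secondary difficulty is bookkeeping the "all but finitely many" claims uniformly: each bad prime in $S$ can divide $B_\alpha(P,Q)$ for infinitely many $\alpha$, so one must argue that for all but finitely many $\alpha$ the primitive divisor produced by Theorem \ref{intstreng} can be chosen outside $S$ — this follows because a fixed finite set of primes can be the \emph{only} prime divisors of $B_\beta(P,O)$ for only finitely many $\beta$ (a consequence of the growth $h(\beta P)\asymp\Norm{\beta}\hat h(P)$ and Siegel-type bounds already implicit in Streng's argument). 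Finally, one should double-check that the Dedekind-domain hypothesis on $\OO$ is used only through its role in Theorem \ref{intstreng} and in making sense of the norm filtration, which it is.
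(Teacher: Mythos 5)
Your step (2) contains the fatal gap, and it is precisely the mistake that the paper's introduction identifies in King's thesis. Pushing forward by $g$ gives only one implication: if $\alpha P+Q\equiv O\pmod{\p}$ then $(\alpha g+f)(P)=g(\alpha P+Q)\equiv O\pmod{\p}$. The converse fails: $(\alpha g+f)(P)\equiv O\pmod{\p}$ only tells you that $\alpha P+Q$ reduces into the kernel of the reduced endomorphism $\widetilde g$, which in general contains nonzero torsion points of the reduction. The kernel being of bounded size does not help, because for each individual prime $\p$ the point $\alpha P+Q$ may reduce to a \emph{nonzero} kernel element, so the supports of $B_\alpha(P,Q)$ and $B_{\alpha g+f}(P,O)$ genuinely differ, and they differ at varying primes as $\alpha$ varies --- this is not a discrepancy supported on a fixed finite set $S$. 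Consequently a primitive divisor of $B_{\alpha g+f}(P,O)$ produced by Theorem \ref{intstreng} need not divide $B_\alpha(P,Q)$ at all, and steps (3)--(4) cannot be carried out. (Concretely, with $\OO=\Z$, $Q=T$ of order $3$, $g=3$, $f=0$: a prime with $nP-T\equiv O$ but $T\not\equiv O$ divides $B_{3n}(P,O)$ but not $B_n(P,T)$.)

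The correct move, which the paper makes, is to divide by $g$ rather than multiply by it: choose $P'\in E(\overline{\Q})$ with $g(P')=P$ (this is where $g\neq 0$ is used). Then $f(P')=Q-T$ with $T$ torsion, and one gets the \emph{exact} identity of points $\alpha P+Q=(\alpha g+f)P'+T$, hence the exact identity of ideals $B_\alpha(P,Q)=B_{\alpha g+f}(P',T)$ after enlarging $K$ --- no kernel discrepancy and no exceptional set of primes. One then applies Theorem \ref{Thm1} to the pair $(P',T)$ (not Theorem \ref{intstreng}, since the shift $T$ survives). Your step (4) correctly anticipates the remaining issue, namely that the reindexing $\alpha\mapsto\alpha g+f$ does not respect the norm ordering; the paper resolves this quantitatively by showing that if $\Norm{\beta}<\Norm{\alpha}$ but $\Norm{\beta g+f}\geq\Norm{\alpha g+f}$ then the gap $\Norm{\beta g+f}-\Norm{\alpha g+f}$ is $O(\sqrt{\Norm{\alpha}})$, while two distinct elements of a coset of $\Ann_\p(P')$ must have norms differing by at least a constant times $\Norm{\alpha g+f}\gg\Norm{\alpha}$; these bounds are incompatible for $\Norm{\alpha}$ large. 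Your heuristic ``$\Norm{\alpha g+f}$ differs from $\Norm{g}\Norm{\alpha}$ by lower-order terms'' gestures at this but would need to be made precise in exactly this two-sided form, since a bounded multiplicative comparability of norms is not by itself enough to transfer primitivity.
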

Observe that Theorem \ref{Thm2} is a generalization of Theorem \ref{Thm1} since, if $f=0$, then $Q$ is a torsion point and so we are in the hypothesis of Theorem \ref{Thm1}.

 In the end, we show how these results generalize Theorem \ref{intrshift}, proving the following theorem, which is a special case of Theorem \ref{Thm1}. 
\begin{theorem}\label{Thm3}
Let $E$ be an elliptic curve defined over a number field $K$ by a Weierstrass equation with coefficients  in $\OO_K$. Take $P$ and $Q$ in $E(K)$. Consider the sequence $\{B_n(P,Q)\}_{n\in \Z}$. Suppose that $P$ is a non-torsion point and that $Q$ is a torsion point. Then, for all but finitely many $n\in \Z$, $B_n(P,Q)$ has a primitive divisor.
\end{theorem}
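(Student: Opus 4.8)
The plan is to deduce Theorem \ref{Thm3} as a direct special case of Theorem \ref{Thm1}, taking $\OO=\Z$. Recall that $\Z$ embeds canonically into $\End(E)$ as the subring of multiplication-by-$n$ maps, and $\Z$ is a PID, hence a Dedekind domain; so $\OO=\Z$ is an admissible choice of subring in Theorem \ref{Thm1}. Since by hypothesis $P$ is non-torsion and $Q$ is torsion, all the hypotheses of Theorem \ref{Thm1} are satisfied with this choice of $\OO$.

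The one point that needs to be spelled out is that the notion of primitive divisor from the Definition, specialized to $\OO=\Z$, coincides with the classical notion implicit in the statement of Theorem \ref{Thm3}. For $n\in\Z\subseteq\End(E)$ the degree of the endomorphism $[n]$ is $\Norm{n}=\deg[n]=n^2$. Hence, for $m,n\in\Z$, the inequality $0\leq\Norm{m}<\Norm{n}$ is equivalent to $|m|<|n|$. Therefore ``$B_n(P,Q)$ has a primitive divisor'' in the sense of the Definition (with $\OO=\Z$) means exactly that there is a prime $\OO_K$-ideal $\p$ dividing $B_n(P,Q)$ and not dividing $B_m(P,Q)$ for every $m$ with $|m|<|n|$ and $B_m(P,Q)\neq 0$ — which is the usual statement for the sequence $\{B_n(P,Q)\}_{n\in\Z}$.

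With these identifications in place, Theorem \ref{Thm1} applied to $\OO=\Z$ asserts that for all but finitely many $\alpha=n\in\Z$ the ideal $B_n(P,Q)$ has a primitive divisor, which is precisely Theorem \ref{Thm3}. In this sense there is no substantive obstacle: Theorem \ref{Thm3} is a genuine particular case of Theorem \ref{Thm1}, and the only thing to verify is the elementary fact $\deg[n]=n^2$ together with the resulting translation between the $\Norm{\cdot}$-ordering on $\Z$ and the ordering by absolute value. All the real work — proving the statement for an arbitrary Dedekind subring $\OO\subseteq\End(E)$ with $Q$ torsion — is contained in the proof of Theorem \ref{Thm1}.
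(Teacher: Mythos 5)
Your proposal is correct and is exactly the paper's argument: the paper proves Theorem \ref{Thm3} by observing that $\Z$ is a Dedekind domain and invoking Theorem \ref{Thm1} with $\OO=\Z$. Your additional remark that $\Norm{n}=\deg[n]=n^2$, so the $\Norm{\cdot}$-ordering coincides with the ordering by absolute value, is a correct (and worthwhile) elaboration of the same deduction.
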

Observe that Theorem \ref{Thm3} is a generalization of Theorem \ref{intrshift} since we remove the hypothesis that the order of $Q$ is prime.
\begin{remark}
	If $P$ is a torsion point, then $\{B_\alpha(P,Q)\}_{\alpha\in \OO}$ is a periodic sequence. For such sequences, it is clear that there is no a primitive divisor for all but finitely many terms. Therefore, the hypothesis $P$ non-torsion is necessary. Moreover, the hypothesis $g\neq 0$ is necessary in order to use Theorem \ref{Thm1} to prove Theorem \ref{Thm2}. If $g=0$, then $f(P)=O$ and so $f=0$ since $P$ is a non-torsion point. So, we would have $P$ and $Q$ generic and we are not able to prove the theorem in this case. Finally, regarding the hypothesis that $\OO$ is a Dedekind domain, we were not able to remove this hypothesis, even if our ideas follow the proof of Theorem \ref{intstreng}, where the hypothesis is not necessary.
\end{remark}
\begin{remark}
	In the introduction of \cite{shparlinski}, it is claimed that there exists a preprint by Everest and King with a proof of our Theorem \ref{Thm3}. This preprint was never published and we were not able to find the preprint. 
	Moreover, in the PhD thesis of King \cite{kingphd}, there is a proof of Theorem \ref{Thm3}, but this proof has a mistake. We show where this mistake is: Let $P$ and $T$ be two rational points on an elliptic curve defined over $\Q$ so that $T$ is a $q$ torsion point. In \cite[beginning of page 70]{kingphd}, it is assumed that, for every prime $p$ such that $nqP\equiv O\mod{p}$, we have $nP+T\equiv O\mod{p}$. This is clearly false. For example, if $T$ is a $3$-torsion point and $p$ is such that $nP-T\equiv O \mod {p}$ but $T \not\equiv O\mod {p}$, then \[nP+T\equiv (nP-T)-T\equiv -T\not\equiv O\mod{p}\] and \[3nP\equiv 3nP-3T\equiv 3(nP-T)\equiv O\mod {p}.\]
	
	For these reasons, we assume that our Theorem \ref{Thm3} is original.
\end{remark}
\begin{remark}
If $E$ does not have complex multiplication, then $\OO=\Z$. In this case, we are almost in the setting of \cite{shifted}. For this reason, we call this paper "sequences associated to curves with complex multiplication". In any case, even when $E$ does not have CM, Theorem \ref{Thm2} and Corollary \ref{corshifter} are new and generalize the work in \cite{shifted}.
\end{remark}

The paper is organized as follows. In Section \ref{prel}, we give some preliminary definitions on elliptic curves. In Section \ref{structure}, we give a short recap of the structure of the proof of Theorem \ref{Thm1}. In the next sections, we begin with the proof of Theorem \ref{Thm1}. We will show that $B_\alpha(P,Q)$ has a primitive divisor for $\Norm{\alpha}$ large enough. In Section \ref{secgoodprime} and \ref{secbadprime}, we study the prime divisors of $B_\alpha(P,Q)$ that are not primitive divisors. We split the study of the divisors of $B_\alpha(P,Q)$ in two cases, the case of the good prime divisors (that have some useful properties) and the case of the bad prime divisors (that do not have the useful properties, but they are finite). Then, in Section \ref{secproof}, we conclude the proof of Theorem \ref{Thm1}, showing that $B_\alpha$ necessarily has a primitive divisor, if $\Norm{\alpha}$ large enough. Finally, in Section \ref{f=g}, we prove Theorem \ref{Thm2} and Theorem \ref{Thm3}, using Theorem \ref{Thm1}.
\section{Preliminaries}\label{prel}
Let $E$ be an elliptic curve defined by the equation
\[
y^2+a_1xy+a_3y=x^3+a_2x^2+a_4x+a_6
\]
with the coefficients in $\OO_K$, the ring of integers of a number field $K$.

Let $\OO$ be a subring of $\End(E)$. Let $L$ be the field of quotients of $\OO$. This is a subfield of the field of quotients of $\End(E)$. 
As is proved in \cite[Corollary III.9.4]{arithmetic}, the ring $\End(E)$ can be either $\Z$ or an order in an imaginary quadratic field. In the second case, we say that $E$ has complex multiplication (or CM). 
Thus, $L$ is a number field of degree at most $2$. 

The ring $\OO$ is a Dedekind domain and so every non-zero $\OO$-ideal can be written as product of prime $\OO$-ideals. This factorization is unique.

Fix $P$ a non-torsion point and $Q$ a torsion point in $E(K)$. \textbf{From now on, we denote $B_\alpha(P,Q)$ with $B_\alpha$ for any $\alpha\in \OO$.}
\begin{definition}
	Given $\alpha\in \OO$, we denote with $(\alpha)$ the $\OO$-ideal generated by $\alpha$.
\end{definition}
\begin{definition}
	Let $\p$ be a prime ideal in $\OO_K$. We say that $\p$ is a \textbf{primitive divisor} of $B_\alpha$ if $\p$ divides $B_\alpha$ and does not divide $B_\beta$ for every $\beta\in \OO$ such that $0\leq \Norm{\beta}<\Norm{\alpha}$ and $B_\beta\neq 0$. Recall that, as we said in the introduction, with $\Norm{\cdot}$ we denote the degree of the endomorphism. For a definition of degree, see \cite[Section II.2]{arithmetic}. Conversely, we say that $\p$ is a \textbf{non-primitive divisor} of $B_\alpha$ if $\p$ divides $B_\alpha$ but it is not a primitive divisor.
\end{definition}
\begin{lemma}
	Let $K_1$ be a finite extension of $K$ and consider the sequence $\{B_\alpha\OO_{K_1}\}_{\alpha \in \OO}$ of integral $\OO_{K_1}$-ideals. If $B_\alpha\OO_{K_1}$ has a primitive divisor, then $B_\alpha$ has a primitive divisor.
\end{lemma}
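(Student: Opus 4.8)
The plan is to take a primitive divisor $\mathfrak{P}$ of $B_\alpha\OO_{K_1}$ and push it down to the prime $\mathfrak{p} := \mathfrak{P}\cap\OO_K$ of $\OO_K$ lying below it, and then check directly that $\mathfrak{p}$ is a primitive divisor of $B_\alpha$. The whole argument rests on two observations that make the notion of primitive divisor essentially insensitive to the ground field.

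First I would record these two facts. The quantity $\Norm{\beta}$ is the degree of the endomorphism $\beta$, which is defined purely in terms of $E$ and $\beta$ and makes no reference to $K$ or $K_1$; so it is the same number whether we view the sequence over $K$ or over $K_1$. Moreover $B_\beta=0$ holds exactly when $\beta(P)+Q=O$, a condition on points of $E$ that does not depend on the field of definition, so $B_\beta\neq 0$ if and only if $B_\beta\OO_{K_1}\neq 0$. Consequently the set of indices that must be avoided in the definition of a primitive divisor, namely $S_\alpha:=\{\beta\in\OO : 0\le\Norm{\beta}<\Norm{\alpha} \text{ and } B_\beta\neq 0\}$, is literally the same for the sequence $\{B_\beta\}$ over $K$ and for the sequence $\{B_\beta\OO_{K_1}\}$ over $K_1$.

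Next I would use the standard behaviour of ideals under extension of Dedekind domains. Since $\mathfrak{P}$ lies over $\mathfrak{p}$ we have $\mathfrak{p}\OO_{K_1}\subseteq\mathfrak{P}$, that is, $\mathfrak{P}\mid\mathfrak{p}\OO_{K_1}$. If $\mathfrak{p}\nmid B_\alpha$, then $\mathfrak{p}$ and $B_\alpha$ are coprime in $\OO_K$, hence $\mathfrak{p}\OO_{K_1}$ and $B_\alpha\OO_{K_1}$ are coprime in $\OO_{K_1}$; but $\mathfrak{P}$ divides both, a contradiction. Therefore $\mathfrak{p}\mid B_\alpha$. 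Finally, suppose for contradiction that $\mathfrak{p}$ is a non-primitive divisor of $B_\alpha$: then there is some $\beta\in S_\alpha$ with $\mathfrak{p}\mid B_\beta$, hence $\mathfrak{p}\OO_{K_1}\mid B_\beta\OO_{K_1}$, and since $\mathfrak{P}\mid\mathfrak{p}\OO_{K_1}$ we get $\mathfrak{P}\mid B_\beta\OO_{K_1}$ for a $\beta\in S_\alpha$, contradicting the fact that $\mathfrak{P}$ is a primitive divisor of $B_\alpha\OO_{K_1}$. Hence $\mathfrak{p}$ is a primitive divisor of $B_\alpha$, which proves the lemma.

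There is no genuinely hard step here: the content is entirely the bookkeeping in the second paragraph (that degrees of endomorphisms and the vanishing condition $\beta(P)+Q=O$ are field-independent, so the index set $S_\alpha$ coincides on both sides) together with the elementary fact, recalled in the third paragraph, that coprimality of integral ideals is preserved when one extends scalars to a larger Dedekind domain. The only point that needs a little care is confirming that the ``zero terms'' of the two sequences match up, which is exactly what the field-independence of $\beta(P)+Q=O$ provides.
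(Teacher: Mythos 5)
Your proof is correct and follows essentially the same route as the paper's: contract the primitive divisor $\mathfrak{P}$ to $\mathfrak{p}=\mathfrak{P}\cap\OO_K$, note $\mathfrak{P}\mid\mathfrak{p}\OO_{K_1}$, and derive a contradiction from any $\beta$ witnessing non-primitivity of $\mathfrak{p}$. The extra bookkeeping you supply (field-independence of $\Norm{\beta}$ and of the vanishing condition, and the coprimality argument for $\mathfrak{p}\mid B_\alpha$) is left implicit in the paper but is exactly the right justification.
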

\begin{proof}
	Let $\p'$ be a primitive divisor of $B_\alpha\OO_{K_1}$ and take $\p=\p'\cap \OO_K$. So, $\p$ divides $B_\alpha$. If $\p$ is not a primitive divisor of $B_\alpha$, then $\p$ divides $B_\beta$ for $\Norm{\beta}<\Norm{\alpha}$ with $B_\beta\neq 0$. Hence, $\p\OO_{K_1}$ divides $B_\beta\OO_{K_1}$. But $\p'$ divides $\p\OO_{K_1}$ and then $B_\beta\OO_{K_1}$, that is absurd. So, $\p$ is a primitive divisor for $B_\alpha$. 
\end{proof}
Thanks to the previous lemma, we can prove the theorem for $K_1$, a finite extension of $K$. So, during the proof, we can substitute $K$ with a larger number field. 
\begin{definition}\label{defS}
	Recall that $Q$ is a torsion point. Define the non-zero $\OO$-ideal \[\s\coloneqq\{s\in \OO\mid sQ=O\}.\]
\end{definition}
Given a prime $\OO_K$-ideal $\p$ and $R_1,R_2\in E(K)$, we say that \[R_1\equiv R_2\mod{\p}\] if $R_1$ and $R_2$ reduce to the same point in the reduced curve modulo $\p$. It is easy to show that, given $R\in E(K)$, we have $R\equiv O\mod{\p}$ if and only if $\nu(x(R))<0$, where $\nu$ is the place associated with $\p$.
\begin{definition}\label{Ip}
	Take $\p$ a prime $\OO$-ideal and let $R\in E(K)$. Define the integral $\OO$-ideal
	\[
	\Ann_{\p}(R)=\{i\in \OO\mid iR\equiv O\mod{\p}\}.
	\]
\end{definition} 
\begin{lemma}\label{finiteS}
	For all but finitely many prime $\OO$-ideals $\p$,  \[\Ann_{\p}(Q)=\s.\] 
\end{lemma}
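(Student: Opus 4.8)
The inclusion $\s\subseteq\Ann_{\p}(Q)$ holds for \emph{every} prime $\OO_K$-ideal $\p$, since $sQ=O$ trivially implies $sQ\equiv O\bmod\p$. So the only thing to prove is the reverse inclusion $\Ann_{\p}(Q)\subseteq\s$ for all but finitely many $\p$; equivalently, that for such $\p$ the relation $iQ\equiv O\bmod\p$ with $i\in\OO$ forces $iQ=O$.

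The plan is to invoke the standard fact that reduction is injective on torsion outside a finite set of primes. Let $n$ be the order of $Q$. For any $i\in\OO\subseteq\End(E)$ the point $iQ$ is killed by $n$ and is defined over $K$, so $iQ\in E(K)[n]$. Let $\Sigma$ be the finite set consisting of the primes of $\OO_K$ of bad reduction for $E$ together with the primes of $\OO_K$ lying above a rational prime dividing $n$. For $\p\notin\Sigma$ the prime $\p$ has good reduction and residue characteristic $p\nmid n$, so by \cite[Proposition VII.3.1]{arithmetic} the reduction map is injective on $E(K)[n]$. Hence for $\p\notin\Sigma$, the relation $iQ\equiv O\bmod\p$ implies $iQ=O$, i.e. $i\in\s$, and we conclude $\Ann_{\p}(Q)=\s$ for every $\p\notin\Sigma$.

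There is no real obstacle in this argument; the only mild point is that one must discard the finitely many primes of bad reduction and of residue characteristic dividing $n$, for which the injectivity statement of \cite[Proposition VII.3.1]{arithmetic} is not available — these account precisely for the finitely many exceptions allowed in the statement of the lemma.
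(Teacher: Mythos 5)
Your proof is correct and follows essentially the same route as the paper: both arguments reduce to the injectivity of reduction on torsion of order prime to the residue characteristic at primes of good reduction (Silverman, Proposition VII.3.1), discarding the finitely many bad primes and primes above the order of $Q$. Your write-up is in fact slightly more explicit than the paper's, in that it notes the trivial inclusion $\s\subseteq\Ann_\p(Q)$ and verifies that $iQ$ lies in $E(K)[n]$ before applying the injectivity statement.
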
  
\begin{proof}
	If $E$ is not singular modulo $\p$ and $\p$ is coprime with $\s$, then the reduction modulo $\p$ of the $\s$-torsion points is injective. This is proved in \cite[Proposition VII.3.1]{arithmetic}. Thus, for these primes, the point $Q$ reduces to an $\s$-torsion point modulo $\p$.
\end{proof}

Now, we introduce the theory of heights for elliptic curves. For the details, see \cite[Chapter VIII]{arithmetic}. Let $M_K$ be the set of places of $K$ normalized so that
\[
\prod_{\nu\in M_K}\abs{x}_\nu^{n_\nu}=1
\] 
for every $x\in K^*$, where $n_\nu=[K_\nu:\Q_\nu]$. Given $\nu$ in $M_K$, let \[h_\nu(x)=\max\{0,\log\abs{x}_\nu\}\] for $x\in K^*$ and
\[
h(x)=\frac{1}{[K:\Q]}\sum_{\nu\in M_K}n_\nu h_\nu(x).
\]
\begin{definition}
	For any $\nu\in M_K$ and $R\in E(K)\setminus\{O\}$, define  \[h_\nu(R)=h_\nu(x(R)).\]Moreover, for $R\in E(K)\setminus\{O\}$, we define the height of the point as \[h(R)=h(x(R)).\]  Finally, we put
	\[
	h(O)=0,
	\]
	where $O$ is the identity of the curve.
\end{definition} 
If $\nu$ is finite, then $h_\nu(R)>0$ if and only if $R\equiv O\mod{\p}$, where $\p$ is the prime associated to $\nu$. 

\begin{definition}
Let $\hat{h}(R)$ be the canonical height as defined in \cite[Section VIII.9]{arithmetic}. For every $R\in E(K)$, we put
\[
\hat{h}(R)=\frac 12\lim_{n\to \infty}\frac{h(2^nR)}{4^n}.
\]
The fact that this limit exists is proved in \cite[Proposition VIII.9.1]{arithmetic}.
\end{definition}

\begin{lemma}\label{canheight}
	The canonical height has the following properties:
	\begin{itemize}
		\item $\hat{h}(R)=0$ if and only if $R$ is a torsion point. Otherwise, $\hat{h}(R)>0$.
		\item For every $R\in E(K)$, \[\abs{h(R)-2\hat{h}(R)}\leq C_E,\] with $C_E$ that depends only on $E$.
		\item If $f:E_1\to E_2$ is an isogeny between two elliptic curves and $R\in E_1(K)$, then
		\[
		\hat{h}(f(R))=\Norm f\hat{h}(R),
		\] 
		where with $\Norm f$ we mean the degree of the isogeny. 
	\end{itemize}  
\end{lemma}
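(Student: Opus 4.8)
These three statements are classical facts about canonical heights, and the whole of Lemma~\ref{canheight} could in principle be quoted from \cite[Chapter VIII]{arithmetic}; nonetheless I indicate how each item is obtained with the normalisations used above. For the bound in the second item the plan is a telescoping argument based on the duplication estimate. Since $x(2R)$ is a rational function of $x(R)$ of degree $4$ whose coefficients are polynomials in the $a_i$, there is a constant $C$ depending only on the chosen Weierstrass model of $E$ with
\[
\abs{h(2R)-4h(R)}\le C
\]
for all $R\in E(K)$, with the usual conventions when $2R=O$ or $R$ is $2$-torsion. Writing
\[
h(R)-\frac{h(2^NR)}{4^N}=\sum_{n=1}^{N}\frac{1}{4^n}\bigl(4h(2^{n-1}R)-h(2^nR)\bigr),
\]
each summand has absolute value at most $C\,4^{-n}$, so the left-hand side is bounded by $C/3$ uniformly in $N$; letting $N\to\infty$ gives $\abs{h(R)-2\hat h(R)}\le C/3$, so one may take $C_E=C/3$.

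For the first item, if $R$ is torsion then $\{2^nR\}_{n\ge 0}$ is a finite set, so $h(2^nR)$ is bounded and $\hat h(R)=\tfrac12\lim_n 4^{-n}h(2^nR)=0$. Conversely, suppose $\hat h(R)=0$. Directly from the limit definition one has $\hat h(2^kR)=4^k\hat h(R)=0$ for every $k\ge 0$, so by the second item $h(2^kR)\le 2\hat h(2^kR)+C_E=C_E$ for all $k$. By Northcott's theorem — there are only finitely many points of $E(K)$ of bounded height, applied to the $x$-coordinates inside the fixed number field $K$ — the set $\{2^kR:k\ge 0\}$ is finite, hence $2^iR=2^jR$ for some $i<j$ and $R$ is torsion. (That $\hat h>0$ for non-torsion $R$ is then the contrapositive.)

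For the third item, the key observation is that $x_{E_2}\circ f\colon E_1\to\mathbb{P}^1$ is invariant under $R\mapsto -R$, because $f$ is a group homomorphism and $x_{E_2}$ is an even function; hence it factors as $x_{E_2}\circ f=\varphi\circ x_{E_1}$ for a rational function $\varphi$ of degree $\Norm f=\deg f$ (comparing degrees of morphisms to $\mathbb{P}^1$). Functoriality of the Weil height for rational maps then yields a constant $C'$, depending on $f$, $E_1$, $E_2$ but not on $R$, with $\abs{h(f(R))-\Norm f\cdot h(R)}\le C'$ for all $R\in E_1(K)$ (after enlarging $K$, if necessary, so that $f$ is defined over $K$, which is permitted by the earlier lemma). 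Applying this inequality to $2^nR$, using $f(2^nR)=2^nf(R)$, dividing by $4^n$ and letting $n\to\infty$ makes the error term disappear and gives $2\hat h(f(R))=\Norm f\cdot 2\hat h(R)$, which is the claim.

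I do not expect a serious obstacle here, since everything reduces to standard properties of the Weil and canonical heights. The only points that need a little care are the bookkeeping of the $O(1)$-constants — one must be sure they are independent of $R$ and of $n$, which is exactly what makes the telescoping sum and the two limit passages legitimate — and the appeal to Northcott finiteness in the second direction of the first item; if one prefers, the first two items can be cited verbatim from \cite[Chapter VIII]{arithmetic} and the third from the functoriality of the canonical height stated there.
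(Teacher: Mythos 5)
Your proof is correct, and all three arguments (the telescoping bound via the duplication estimate, Northcott finiteness for the torsion characterization, and descent of $x_{E_2}\circ f$ through $x_{E_1}$ plus functoriality of the Weil height for the isogeny formula) are exactly the standard proofs of the results the paper cites: the paper itself simply refers to \cite[Theorem VIII.9.3]{arithmetic} for the first two items and to \cite[Lemma 3.1]{thesisstreng} for the third. So there is no substantive difference in approach, only that you have written out the proofs the references contain; note only that the factor of $\tfrac12$ in the paper's normalisation of $\hat h$ is what turns the usual bound $\abs{h-\hat h_{\mathrm{std}}}\le C$ into the stated $\abs{h(R)-2\hat h(R)}\le C_E$, which you have handled correctly.
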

\begin{proof}
	The first two facts follow from \cite[Theorem VIII.9.3]{arithmetic}. The last one is proved in \cite[Lemma 3.1]{thesisstreng}.
\end{proof}
  
\begin{definition}
	Given $\alpha\in \OO$, let $E[\alpha]$ be the subgroup of points of $E(\overline{\Q})$ that are in the kernel of $\alpha$. For every $\OO$-ideal $I$, let $E[I]$ be
	\[
	E[I]=\bigcap_{i\in I} E[i].\]
	We say that the points of $E[I]$ are the $I$-torsion points.
\end{definition} 
In the case when $I=(n)$, the group $E[I]$ is the group of the $n$-torsion points.
\begin{definition}\label{EI}
	Define the curve
	\[
	E_I=E/E[I]
	\]
	as in \cite[Proposition III.4.12]{arithmetic}. The elliptic curve $E_I$ has the property that there exists an isogeny $f:E\to E_I$ such that $\ker(f)=E[I]$. 
\end{definition}
Let $\omega_E$ be the invariant differential of $E$ and $\alpha^*\omega_E$ be the pullback along $\alpha$. For the details on the invariant differential, see \cite[Section III.5]{arithmetic}. Enlarging $K$, we can assume that $L=\OO\otimes_\Z\Q$ is a subfield of $K$. We fix the embedding $\OO\subseteq L\hookrightarrow K$ such that
\[
\alpha=\frac{\alpha^*\omega_E}{\omega_E}
\]
for any $\alpha\in \End(E)$. The existence of this embedding is proved in \cite[Proposition II.1.1]{advanced}. We need to fix this embedding in order to apply the results of Streng in \cite{streng}. We will not give more details on the invariant differential since it is just a technical tool used to prove Lemma \ref{315}.

Thanks to Definition \ref{EI}, we know that for every ideal $I$ there exists an isogeny $f$ from $E$ to an elliptic curve $E_I$ such that $\Ker(f)=I$. Since the $\OO_K$-ideals are infinite, we have that the set of elliptic curves $\{E_I\}_{I \text{ ideals}}$ is infinite. During the proof of Theorem \ref{Thm1}, we will need the set $\{E_I\}$ to be finite. In the next lines, choosing the isogeny of Definition \ref{EI} in an appropriate way, we will show that we can assume that the set $\{E_I\}_{I \text{ ideals}}$ is finite. We follow the work of Streng.

For the details on what follows, see \cite[Chapter II]{advanced} or \cite[pages 200-201]{streng}.
\begin{definition}
	Let $\Cl(\OO)$ be the set of $\OO$-ideals modulo equivalence, where we say that $I$ and $J$ are equivalent if there exists $x\in L^*$ such that $I=xJ$. The set $\Cl(\OO)$ is finite.
\end{definition}
If $I$ and $I'$ are in the same class in $\Cl(\OO)$, then $E_I$ is isomorphic to $E_{I'}$. For every class $\overline{I}$ in $\Cl(\OO)$, we fix a curve $E_{\overline{I}}$ (for $E_{\OO}$, we choose $E$). If $\overline{I}$ is the class of $I$ in $\Cl(\OO)$, then there is an isogeny $\varphi_{I}:E\to E_{\overline{I}}$ such that the kernel of the isogeny is $E[I]$. If $(\alpha)=IJ$, then there are two isogenies $\varphi_I$ and $f_J$ so that $[\alpha]=f_J\circ \varphi_I$ and $f_J:E_{\overline{I}}\to E_{(\alpha)}=E$ is an isogeny with kernel equal to the $J$-torsion points of $E_{\overline{I}}$. Since $E[I]$ is $\Gal(\overline{\Q}/K)$-invariant, one can prove (see \cite[Remark III.4.13.2]{arithmetic}) that $E_{\overline{I}}$ can be defined over $K$ for every $\OO$-ideal $I$ and that every isogeny $\varphi_I$ and $f_J$ can be defined over $K$. Even if the choice of the isogenies $\varphi_I$ and $f_J$ is not unique, we fix a choice from now on. Hence, we have a set of isogenies $\varphi_I$ and $f_J$ defined over $K$.
\begin{definition}
	We denote with $\Norm{I}$ the degree of the isogeny $\varphi_{I}$.  Hence, we have $\Norm{\alpha}=\Norm{(\alpha)}$, since $\Norm{\alpha}$ is the degree of the isogeny $\alpha$.
\end{definition}
\begin{definition}
	For every $\overline{I}\in \Cl(\OO)$, we define the fractional $\OO_K$-ideal $C_{\overline{I}}$ following \cite[page 201]{streng}. Let $\omega_{\overline{I}}$ be the invariant differential of $E_{\overline{I}}$. Choose $I$ an element in $\overline{I}$ and put
	\[
	C_{\overline{I}}=\frac{\varphi_I^*\omega_{\overline{I}}}{\omega_E}(I\OO_K)^{-1}.
	\]
	This is a fractional $\OO_K$-ideal, that is independent from the choice of $I$. 
\end{definition}
\begin{lemma}\cite[Lemma 3.15]{streng}\label{315a}
	Let $\nu$ be a finite place and $p$ be the rational prime so that $\nu(p)>0$. Let $t_\nu$ be the smallest integer so that
	\[
	t_\nu\geq \frac{\nu(p)}{p-1}+\nu(C_{\overline{I_1}})-\nu(C_{\overline{I_2}})
	\]
	for all the ideals $I_1$ and $I_2$ in $\OO$. Take a point $P\in E(K)$ and suppose that $\nu(x(\varphi_I(P)))\leq -t_\nu$. If $I\mid J$, then
	\[
	\nu(x(\varphi_J(P)))=\nu(x(\varphi_I(P)))+2\nu(J)-2\nu(I)+2\nu(C_{\overline{J}})-2\nu(C_{\overline{I}}).
	\]
\end{lemma}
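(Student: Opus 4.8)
The plan is to localize the statement at $\nu$ and transport everything to the formal groups of the curves $E_{\overline I}$. Work over the completion $K_\nu$, with residue characteristic $p$ and valuation ring $\OO_\nu$. Since $\nu(x(\varphi_I(P)))\le -t_\nu<0$, the criterion recalled just before Definition~\ref{Ip} shows that $\varphi_I(P)$ reduces to $O$ modulo the prime of $\nu$, hence lies in the formal group $\widehat E_{\overline I}(\mathfrak m_\nu)$. Let $z\coloneqq -x(\varphi_I(P))/y(\varphi_I(P))$ be its formal parameter; from the Weierstrass equation near $O$ one has $\nu(x(\varphi_I(P)))=-2\nu(z)$, so that $\nu(z)$ is large — this is the only way the hypothesis $\nu(x(\varphi_I(P)))\le -t_\nu$ enters.

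Because $I\mid J$ we have $E[I]\subseteq E[J]$, so $\varphi_J$ factors through $\varphi_I$: there is an isogeny $\lambda\colon E_{\overline I}\to E_{\overline J}$, defined over $K$, with $\varphi_J=\lambda\circ\varphi_I$ and $\deg\lambda=\Norm{J}/\Norm{I}$. It induces a homomorphism of formal groups, which over $K_\nu$ can be written $\widehat\lambda(T)=\widehat{\exp}_{E_{\overline J}}\!\big(c\,\widehat{\log}_{E_{\overline I}}(T)\big)=cT+O(T^2)$, where the scalar $c=\widehat\lambda'(0)$ is characterized by $\lambda^*\omega_{\overline J}=c\,\omega_{\overline I}$ (see \cite[Chapter~IV]{arithmetic}). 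Pulling this last identity back along $\varphi_I$ and using $\varphi_J^*=\varphi_I^*\circ\lambda^*$ gives $c=(\varphi_J^*\omega_{\overline J}/\omega_E)(\varphi_I^*\omega_{\overline I}/\omega_E)^{-1}$; the definition of the $C_{\overline I}$ and of $\Norm{\cdot}$ on $\OO$-ideals then determines $\nu(c)$ as the appropriate combination of $\nu(C_{\overline J})$, $\nu(C_{\overline I})$, $\nu(J)$, $\nu(I)$ — the routine comparison of differentials that, substituted below, reproduces exactly the signs in the statement.

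The heart of the matter is to show that the leading term of $\widehat\lambda$ dominates at our point, i.e.\ $\nu\big(\widehat\lambda(z)\big)=\nu(c)+\nu(z)$. The formal logarithm and exponential of an elliptic formal group are mutually inverse, valuation-preserving bijections on the disc $\{\,T:\nu(T)>\nu(p)/(p-1)\,\}$ (\cite[Chapter~IV]{arithmetic}, \cite[Chapter~II]{advanced}). The integer $t_\nu$ — the least one with $t_\nu\ge \nu(p)/(p-1)+\nu(C_{\overline{I_1}})-\nu(C_{\overline{I_2}})$ for every pair of ideals $I_1,I_2$ — is calibrated precisely so that both $z$ and $c\,\widehat{\log}_{E_{\overline I}}(z)$ lie in this disc, uniformly over all admissible pairs $(I,J)$; here one also uses $\nu(J)\ge\nu(I)$, a consequence of $I\mid J$. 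Granting this, $\nu(\widehat\lambda(z))=\nu(c)+\nu(z)$, and since $\varphi_J(P)=\lambda(\varphi_I(P))$ has formal parameter $\widehat\lambda(z)$ on $E_{\overline J}$ we get $\nu(x(\varphi_J(P)))=-2\nu(\widehat\lambda(z))=-2\nu(c)+\nu(x(\varphi_I(P)))$; plugging in $\nu(c)$ from the previous step gives the asserted equality.

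I expect the main obstacle to be exactly the uniformity in the third step: one must control the tail of $\widehat\lambda$ (equivalently, the convergence and valuation-preservation of $\widehat{\log}$ and $\widehat{\exp}$) simultaneously for the finitely many curves $E_{\overline I}$ representing $\Cl(\OO)$, and verify that the single threshold $t_\nu$, which only records the differences $\nu(C_{\overline{I_1}})-\nu(C_{\overline{I_2}})$, is large enough for every pair $(I,J)$ with $I\mid J$ at once. Everything else is bookkeeping with invariant differentials and formal parameters. Since this is \cite[Lemma~3.15]{streng}, in the paper it may simply be quoted; the above is how I would reconstruct the proof.
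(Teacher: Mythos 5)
The paper gives no proof of this lemma; it is quoted verbatim from \cite[Lemma 3.15]{streng}, and your reconstruction via formal groups --- factoring $\varphi_J=\lambda\circ\varphi_I$, reading off $\nu\bigl(\widehat\lambda'(0)\bigr)$ from the invariant differentials together with the definition of $C_{\overline I}$, and using that the formal logarithm and exponential are valuation-preserving on the disc $\nu(T)>\nu(p)/(p-1)$ --- is exactly Streng's argument. One point worth recording: carried to the end, your computation yields $\nu(x(\varphi_J(P)))=\nu(x(\varphi_I(P)))-2\nu(J)+2\nu(I)-2\nu(C_{\overline J})+2\nu(C_{\overline I})$, which is the version actually invoked in the proof of Lemma \ref{315}; the opposite signs in the displayed statement of Lemma \ref{315a} are a typo, not a defect of your argument.
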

\begin{definition}\label{defW}
	Let $W$ be the finite set of places of $K$ (that we call the \textbf{bad places}) composed by the following places:
	\begin{itemize}
		\item the archimedean places;
		\item the places where $E_{\overline{I}}$ has bad reduction for a class $\overline{I}$ in $\Cl(\OO)$;
		\item the places where $\s\neq \Ann_{\p}(Q)$, that are finite as we showed in Lemma \ref{finiteS};
		\item the places that ramify over $\Q$;
		\item the places $\nu$ where $\nu(C_{\overline{I}})\neq 0$, that are finite since $\Cl(\OO)$ is finite. 
	\end{itemize}
	This finite set depends on $E$ and $Q$.
\end{definition}
\begin{lemma}\label{315}
	Let $\nu\notin W$ and $I$ be an integral $\OO$-ideal. Let $J$ be an integral $\OO$-ideal such that $I \mid J$. Let $R\in E(K)$ be such that $\nu(x(\varphi_I(R)))<0$. Then,
	\[
	\nu(x(\varphi_J(R)))=\nu(x(\varphi_I(R)))-2\nu(I)+2\nu(J).
	\] 
\end{lemma}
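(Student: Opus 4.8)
The plan is to deduce this directly from Lemma \ref{315a}, using the fact that for $\nu\notin W$ all the correction terms involving the fractional ideals $C_{\overline{I}}$ disappear and the threshold $t_\nu$ of that lemma collapses to $1$.

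First I would record what membership $\nu\notin W$ buys us. By Definition \ref{defW}, $\nu$ is a finite place, it is unramified over $\Q$, $\nu(C_{\overline{I}})=0$ for every class $\overline{I}\in\Cl(\OO)$, and every $E_{\overline{I}}$ has good reduction at $\nu$, so Lemma \ref{315a} is indeed applicable at $\nu$. Let $p$ be the rational prime with $\nu(p)>0$. Since $\nu$ is unramified, $\nu(p)=1$, hence $\frac{\nu(p)}{p-1}=\frac{1}{p-1}\leq 1$; and since $\nu(C_{\overline{I_1}})-\nu(C_{\overline{I_2}})=0$ for all ideals $I_1,I_2$, the quantity $t_\nu$ in Lemma \ref{315a} is the smallest integer $\geq\frac{1}{p-1}$, which is $t_\nu=1$.

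Next, the hypothesis $\nu(x(\varphi_I(R)))<0$, together with the fact that $\nu$ is the (integer-valued) valuation attached to a prime ideal, gives $\nu(x(\varphi_I(R)))\leq -1=-t_\nu$. Applying Lemma \ref{315a} with $P=R$ and the given $I\mid J$ therefore yields
\[
\nu(x(\varphi_J(R)))=\nu(x(\varphi_I(R)))+2\nu(J)-2\nu(I)+2\nu(C_{\overline{J}})-2\nu(C_{\overline{I}}).
\]
Since $\nu\notin W$ forces $\nu(C_{\overline{I}})=\nu(C_{\overline{J}})=0$, the last two terms vanish and we obtain exactly the claimed identity.

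I do not expect any serious obstacle here: the lemma is essentially a clean specialization of Streng's Lemma \ref{315a} to the ``generic'' places. The only point requiring a moment's care is the computation $t_\nu=1$, which rests on $\nu$ being unramified (so $\nu(p)=1$) and on all the $C_{\overline{I}}$-valuations at $\nu$ vanishing — both built into the definition of $W$ — and on checking that $\frac{1}{p-1}\le 1$ for every rational prime $p$, so that the smallest integer above it is $1$ rather than $0$ (it is positive) or something larger.
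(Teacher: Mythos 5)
Your proposal is correct and matches the paper's proof essentially line for line: both arguments note that $\nu\notin W$ forces $\nu(p)=1$ and $\nu(C_{\overline{I}})=0$ for all classes, deduce $t_\nu=1$ so that $\nu(x(\varphi_I(R)))\leq -1=-t_\nu$, and then apply Lemma \ref{315a} with the $C$-terms vanishing. Your extra remark justifying why the smallest integer above $\frac{1}{p-1}$ is $1$ is a welcome bit of care that the paper glosses over.
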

\begin{proof}
	Let $p$ be the rational prime associated with $\nu$. Since $\nu$ is not ramified, we have $\nu(p)=1$. Moreover, $\nu(C_{\overline{I}})=0$ for every $\OO$-ideal $I$. So, $t_\nu$, as defined in Lemma \ref{315a}, is equal to $1$. Hence, $	\nu(x(\varphi_I(R)))\leq-1\leq -t_\nu$. Therefore, using Lemma \ref{315a},
	\begin{align*}	\nu(x(\varphi_J(P)))&=\nu(x(\varphi_I(P)))-2\nu(J)+2\nu(I)-2\nu(C_{\overline{J}})+2\nu(C_{\overline{I}})\\&=\nu(x(\varphi_I(P)))-2\nu(J)+2\nu(I).\end{align*}
\end{proof}
With the following example, we show that $\Cl(\OO)$ can have cardinality larger than $1$. In particular, we show that it is possible to find an $\OO$-ideal that is not principal. This example shows why we have to consider also the case when $\Cl(\OO)$ is not trivial.
\begin{example}
Let $\alpha=\sqrt{2\sqrt{2}-1}$ and $K=\Q(\alpha)$. Put
\[
j_0=\left[2\left(323+228\left(\frac{\alpha^2+1}2\right)+\alpha\left(231+161\left(\frac{\alpha^2+1}2\right)\right)\right)\right]^3
\]
and consider the elliptic curve $E$ defined by the equation
\[
y^2+xy=x^3-\frac{36}{j_0-1728}x-\frac 1{j_0-1728}.
\]
This curve has $j$-invariant $j_0$ by direct computation and it is defined over $K$. The endomorphism ring of this curve is $\Z[\sqrt{-14}]$. We will not prove this in detail. Anyway, this follows from \cite[Section VI.4]{arithmetic} and \cite[Equation 12.1, page 226]{coxcm}. Put $\OO=\End(E)=\Z[\sqrt{-14}]$. This is a Dedekind domain since it is the ring of integers of the number field $\Q(\sqrt{-14})$. Consider the $\OO$-ideal $I=(2,\sqrt{-14})$. This ideal has norm $2$ and there are no elements of norm $2$ in $\OO$. So, $I$ is not principal and then $\Cl(\OO)$ is not trivial.
\end{example}
During the paper, we will use the following notation. Given three functions $f,g, h:\OO\to \R^{>0}$, we say that 
\[
f(\alpha)=h(\alpha)+O(g(\alpha))
\]
if there exists a constant $C$ such that, for every $\alpha \in \OO$, 
\[
f(\alpha)\leq h(\alpha)+Cg(\alpha).
\]
In the same way, given three functions $f,g,h:\OO\to \R^{>0}$, we say that 
\[
f(\alpha)\leq h(\alpha)+O(g(\alpha))
\]
if there exists a constant $C$ such that, for every $\alpha \in \OO$, 
\[
f(\alpha)\leq h(\alpha)+Cg(\alpha).
\]

We conclude this section with two remarks on the hypothesis that $\OO$ is a Dedekind domain.
\begin{remark}
	In general, it is not true that a subring $\OO$ of $\End(E)$ is a Dedekind domain. We show an example. Let $E$ be the elliptic curve defined by the equation $y^2=x^3-135x-594$. As is shown in the database \cite{lmfdb}, we have $\End(E)=\Z[\sqrt{-3}]$. If $\OO=\Z[\sqrt{-3}]$, then $\OO$ is not a Dedekind domain. Indeed, if $I=(2,1+\sqrt{-3})\OO$, then $I^2=(2)I$ and $I\neq (2)$. Hence, $I$ does not factor uniquely into prime ideals in this ring. Therefore, $\OO$ is not a Dedekind domain.
\end{remark}
\begin{remark}
Observe that if $\OO$ is a Dedekind domain, then it is integrally closed. Hence, it is a maximal order. So, $\OO$ is the ring of integers of the field $L$.
\end{remark}
\section{Structure of the proof}\label{structure}
Assume $\OO=\Z$ and $Q=O$. Observe that $B_{n}(P,O)=B_{-n}(P,O)$. Then $B_n\coloneqq B_n(P,O)$ with $n>0$ has a primitive divisor if there exists a prime $\p$ that divides $B_n$ and does not divide $B_k$ for $0<k<n$. In order to prove that $B_n$ has a primitive divisor for all but finitely many terms, Silverman in \cite{silverman} used the following strategy:
\begin{enumerate}
	\item If a divisor $\p$ of $B_n$ is non-primitive, then it divides $B_k$ for some $k \mid n$.
	\item By results using formal groups, the valuation of $B_n$ at $\p$ is very close to the valuation of $B_k$ at $\p$.
	\item By results from Diophantine approximation, $\log(\abs{B_n})$ is close to $h(nP)$, which grows like $2n^2\hat{h}(P)$.
	\item So, if $B_n$ does not have a primitive divisors, then
	\[
	\log \abs{B_n}\leq \sum_{k\mid n} \log \abs{B_k}+O(\log n)
	\]
	and hence
	\[
	n^2\hat{h}(P)\leq \sum_{k\mid n} k^2\hat{h}(P)+O(n),
	\] which yields a contradiction when $n$ gets large.
\end{enumerate}
To extend this to imaginary quadratic $\OO$, Streng in \cite{streng} defines a divisibility sequence $\B_I$ indexed by ideals $I$ of $\OO$ with $\B_{(\alpha)}=B_\alpha$ for any $\alpha\in \OO$ and replaces the estimate of step 4 by a sharper inclusion-exclusion argument. 

We want to generalize these techniques to our case. Let $\alpha \in \OO$ and we want to show that $B_{\alpha}(P,Q)$ has a primitive divisor if the degree of $\alpha$ is large enough. Our sequences are not divisibility sequences, hence we do not get $k \mid n$ as in step 1 and even the sharper inclusion-exclusion argument will not save us. 
So instead, we construct a strong divisibility sequence $\B_I$, indexed by some ideals of $\OO$, such that $\B_{(\alpha)}=B_\alpha(P,Q)$. If $\p$ is a non-primitive divisor of $B_\alpha$, then $\p$ divides $\B_I$ for $I$ a proper divisor of $(\alpha)$. This will allow us to get a result similar to steps 1 and 2. Then, we will use a modification of step 4 as Streng did in \cite{streng}, in order to prove Theorem \ref{Thm1}.
\section{Study of the good primes}\label{secgoodprime}
Recall that $W$ is the set of bad places as defined in Definition \ref{defW}.
\begin{definition}
	Let $\p$ be a prime $\OO_K$-ideal. We denote with $\nu_\p$ the place associated with $\p$. We say that $\p$ is a \textbf{good prime} if $\nu_\p\notin W$.
\end{definition}
The aim of this section is to study the good primes. Fix $P$ a non-torsion point and $Q$ an $\s$-torsion point, for $\s$ an $\OO$-ideal. We want to study $B_\alpha(P,Q)$ for $\alpha\in \OO$. We are going to study the prime divisors $\p$ of $B_\alpha(P,Q)$ for $\nu_\p\notin W$.

From now on, we fix the curve and the endomorphism $\alpha\in \OO$. Moreover, as we said before, we put $B_\beta= B_\beta(P,Q)$ for every $\beta\in \OO$.
\subsection{Some auxiliary ideals}
\begin{definition}\label{IJK}
	Recall that we defined $\Ann_{\p}(P)$ in Definition \ref{Ip} as the integral $\OO$-ideal
	\[
	\Ann_{\p}(P)=\{i\in \OO\mid iP\equiv O\mod \p\}.
	\]
	Define the fractional $\OO$-ideals
	\[
	I_{\p}(P)\coloneqq\frac{\Ann_{\p}(P)}{\s}
	\]
	and
	\[
	J_{\p}(P)\coloneqq\frac{(\alpha)}{I_{\p}(P)}=\frac{(\alpha)\s}{\Ann_{\p}(P)}.
	\]
	Observe that $J_{\p}(P)$ depends implicitly from $\alpha$.
\end{definition}
The goal of next lemmas is to show that, if $\p$ is a good prime and it is a divisor of $B_{\alpha}$, then $I_{\p}(P)$ and $J_{\p}(P)$ are integral ideals.

We begin with an easy example.
\begin{example}
	Let $E$ be the rational elliptic curve defined by the equation $y^2=x^3-11x+890$. Let $P=(-1,30)$ and $Q=(7,34)$ be two points on $E(\Q)$. The point $Q$ has order $4$. Let $\OO=\Z$ and then we want to study the sequence $\{B_n(P,Q)\}_{n\in \Z}$. One can easily compute some terms.
	\begin{center}
		\begin{tabular}{|c| c|} 
			\hline
			$n$  & $B_n(P,Q)$\\
			\hline
			$0$& $(1)$\\
			\hline
			$1$& $(2)^2$\\
			\hline
			$2$& $(19)^2$\\ \hline
			$3$& $(6991)^2$\\ \hline
			$4$&$(12338681)^2$\\ \hline
			$5$& $(2)^2\cdot(4890590069)^2$\\
			\hline
		\end{tabular}
	\end{center}
	Let $\p=19$. This is a good prime. Then, one can compute that $\Ann_\p(P)=(8)$. Let $n=8q+r$ with $0\leq r\leq 7$. Hence,
	\[
	nP+Q\equiv 8qP+rP+Q\equiv rP+Q\mod {19}.
	\] 
	One can easily show that, for $0\leq r\leq 7$, we have $rP+Q\equiv O \mod{19}$ if and only if $r=2$. Therefore, $B_k(P,Q)$ is divisible by $19$ if and only if $k=8q+2$.
	
	Since $Q$ has order $4$, we have $\s=(4)$. Hence, in this case,
	\[
	I_\p(P)=\frac{\Ann_\p(P)}{\s}=\frac{(8)}{(4)}=(2).
	\]
	Fix $\alpha\in \OO=\Z$. Then,
	\[
	J_\p(P)=\frac{(\alpha)}{(2)}.
	\]
	In Lemma \ref{S} we will show that, if $\p$ divides $B_\alpha$, then $J_\p(P)$ is integral. In the case $\p=(19)$, $\p$ divides $B_\alpha$ for $\alpha=2+8q$ with $q\in \Z$, as we said before. Therefore, in this case,
	\[
	J_\p(P)=\frac{(\alpha)}{(2)}=\frac{(2+8q)}{(2)}=(1+4q)
	\]
	is an integral ideal.
\end{example}
\begin{lemma}\label{Ipmid}
	Let $\p$ be a prime divisor of $B_\alpha$ such that $\nu_\p\notin W$.
	Then, \[\Ann_{\p}(P)\mid (\alpha)\s.\]
\end{lemma}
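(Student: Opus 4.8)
The plan is to prove the equivalent reverse containment $(\alpha)\s\subseteq \Ann_{\p}(P)$, since in the Dedekind domain $\OO$ divisibility of ideals is the same as reverse inclusion. First I would unwind the meaning of the hypothesis $\p\mid B_\alpha$: by the criterion recalled just before Definition \ref{Ip}, $\p$ divides $B_\alpha=B_\alpha(P,Q)$ exactly when $\nu_\p(x(\alpha(P)+Q))<0$, i.e. when $\alpha(P)+Q\equiv O\pmod{\p}$. Since $\nu_\p\notin W$, the curve $E=E_{\OO}$ has good reduction at $\p$, so reduction modulo $\p$ is a group homomorphism on $E(K)$; in particular it commutes with every element of $\OO$ acting as an endomorphism and with multiplication-by-$s$.

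Next, fix an arbitrary $s\in\s$. Applying multiplication-by-$s$ to the congruence $\alpha(P)+Q\equiv O\pmod{\p}$ and using that reduction is a homomorphism gives $s\alpha(P)+sQ\equiv O\pmod{\p}$. By Definition \ref{defS} we have $sQ=O$ in $E(K)$ on the nose, hence $sQ\equiv O\pmod{\p}$, and therefore $(s\alpha)(P)=s\alpha(P)\equiv O\pmod{\p}$. By Definition \ref{Ip} this says precisely $s\alpha\in\Ann_{\p}(P)$. Letting $s$ range over $\s$ we obtain $\alpha\s\subseteq\Ann_{\p}(P)$, and since $\Ann_{\p}(P)$ is an $\OO$-ideal this upgrades to $(\alpha)\s\subseteq\Ann_{\p}(P)$. (If $\alpha=0$ the statement is vacuous, since then $(\alpha)\s=(0)$ and every ideal divides $(0)$.)

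Translating back, $(\alpha)\s\subseteq\Ann_{\p}(P)$ is exactly the assertion $\Ann_{\p}(P)\mid(\alpha)\s$, which is the claim. I do not expect a genuine obstacle here: the only point requiring care is that $sQ=O$ holds exactly rather than merely modulo $\p$, so the argument needs no coprimality of $\p$ with $\s$, and the sole property of a good prime that is actually used is that $E$ has good reduction at $\p$, which is what makes reduction modulo $\p$ a group homomorphism.
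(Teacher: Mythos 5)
Your proof is correct and follows exactly the paper's argument: multiply the congruence $\alpha P+Q\equiv O\pmod{\p}$ by $s\in\s$, use $sQ=O$ to get $s\alpha\in\Ann_{\p}(P)$, and convert the containment $(\alpha)\s\subseteq\Ann_{\p}(P)$ into divisibility via the Dedekind property of $\OO$. No issues.
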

\begin{proof}
	For every $s\in \s$,
	\[
	s\alpha P\equiv s\alpha P+sQ\equiv s(\alpha P+Q)\equiv O\mod{\p}.
	\]
	Therefore, $(\alpha)\s\subseteq \Ann_{\p}(P)$. Since $\OO$ is a Dedekind domain, this concludes the proof.
\end{proof}	  
\begin{lemma}\label{primdiv}
	Let $\p$ be a non-primitive divisor of $B_\alpha$ such that $\nu_\p\notin W$. Then, \[\Ann_{\p}(P)\neq (\alpha)\s.\]
\end{lemma}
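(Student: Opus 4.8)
\emph{Proof strategy.}
The plan is to argue by contradiction: assume $\p$ is a non-primitive divisor of $B_\alpha$ with $\nu_\p\notin W$, and assume towards a contradiction that $\Ann_\p(P)=(\alpha)\s$. I would begin with two harmless reductions. If $\alpha=0$ then $\Norm{\alpha}=0$, so there is no $\beta$ with $0\le\Norm{\beta}<\Norm{\alpha}$ and $\p$ could not be a non-primitive divisor; hence $\alpha\ne 0$ and $\Norm{\alpha}\ge 1$. Moreover, since $\alpha\ne 0$, $P$ is non-torsion and $Q$ is torsion, the point $\alpha P+Q$ is non-torsion (otherwise $\alpha P=(\alpha P+Q)-Q$ would be torsion), so $\alpha P+Q\ne O$, $B_\alpha$ is a genuine nonzero $\OO_K$-ideal, and $\p\mid B_\alpha$ translates into $\alpha P+Q\equiv O\pmod{\p}$ (using that $\nu_\p\notin W$ forces good reduction of $E$ at $\p$).

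Next I would unwind the definition of non-primitivity: there is $\beta\in\OO$ with $0\le\Norm{\beta}<\Norm{\alpha}$, $B_\beta\ne 0$, and $\p\mid B_\beta$, i.e. $\beta P+Q\equiv O\pmod{\p}$. Since reduction modulo $\p$ is a group homomorphism on $E(K)$ under good reduction, subtracting the two congruences yields $(\alpha-\beta)P\equiv O\pmod{\p}$, so $\alpha-\beta\in\Ann_\p(P)=(\alpha)\s\subseteq(\alpha)$. As $\OO$ is a Dedekind domain this containment means $\alpha\mid\beta$ in $\OO$; writing $\beta=\alpha\gamma$ and using multiplicativity of the degree, $\Norm{\beta}=\Norm{\alpha}\,\Norm{\gamma}$, so $\Norm{\beta}<\Norm{\alpha}$ forces $\Norm{\gamma}=0$, i.e. $\gamma=0$ and $\beta=0$.

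It then remains to rule out $\beta=0$. From $B_0\ne 0$ we get $Q\ne O$, while $\p\mid B_0$ says $Q=0\cdot P+Q\equiv O\pmod{\p}$, i.e. $1\in\Ann_\p(Q)$; since $\nu_\p\notin W$ we have $\Ann_\p(Q)=\s$ by the definition of the bad set $W$ (equivalently Lemma~\ref{finiteS}), so $1\in\s$, whence $\s=\OO$ and $Q=1\cdot Q=O$, contradicting $Q\ne O$. This contradiction completes the argument.

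The only two points that require a little care are the permitted degenerate index $\beta=0$ (the definition of primitive divisor allows it whenever $Q\ne O$), which is exactly where the goodness of $\p$ re-enters, and the justification that one may reduce the points $\alpha P+Q$ and $\beta P+Q$ modulo $\p$ and subtract them in the reduced group — again guaranteed by $\nu_\p\notin W$. Everything else is elementary ideal arithmetic in the Dedekind domain $\OO$ together with multiplicativity of the degree, so I do not anticipate a genuine obstacle here; this lemma is really the short combinatorial counterpart of Lemma~\ref{Ipmid}, upgrading the divisibility $\Ann_\p(P)\mid(\alpha)\s$ to a strict one for non-primitive divisors.
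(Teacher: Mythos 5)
Your proof is correct and uses essentially the same ingredients as the paper's: the witness $\beta$ of non-primitivity, multiplicativity of the degree to force $\beta\in(\alpha)$ to be either $0$ or of degree at least $\Norm{\alpha}$, and the condition $\nu_\p\notin W$ (via $\Ann_\p(Q)=\s$) to exclude $\beta=0$. The only difference is organizational: the paper splits into the cases $\s=\OO$ and $\s\neq\OO$, whereas you run a single contradiction argument that handles both at once (and in fact only needs the weaker containment $\alpha-\beta\in(\alpha)$ rather than $\alpha-\beta\in(\alpha)\s$), which is a mild streamlining rather than a different method.
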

\begin{proof}
	Assume that $\s=\OO$. Then, $Q=O$ and $\Ann_{\p}(P)\mid (\alpha)$ for the previous lemma. Since $\p$ is a non-primitive divisor, we have that there exists $\beta\in \Ann_{\p}(P)$ with $0<\Norm{\beta}<\Norm{\alpha}$. Every non-zero element of $(\alpha)$ has degree at least $\Norm{\alpha}$. Hence, $\beta\notin (\alpha)$. So, $(\alpha)\s=(\alpha)\subsetneq \Ann_{\p}(P)$. 
	
	Assume now $\s\neq \OO$. Observe that $\Ann_{\p}(Q)=\s\neq \OO$, since $\nu_\p\notin W$. So $Q$ is not the identity modulo $\p$. Recall that $\p$ divides $B_\alpha$ if and only if $\alpha P+Q\equiv O\mod \p$. 
	
	Suppose that $\Ann_{\p}(P)=(\alpha)\s$. We will prove that this assumption is absurd. Observe that if $\p$ divides $B_\beta$, then $\beta$ can be written in the form $\beta=\alpha+\alpha s$ for some $s\in \s$. Indeed,
	\[
	(\beta-\alpha)P\equiv (\beta P+Q)-(\alpha P+Q)\equiv O-O\equiv O \mod \p
	\]
	and then \[(\beta-\alpha)\in \Ann_{\p}(P)= (\alpha)\s.\] Let $\beta\in \OO$ be such that $\p$ divides $B_\beta$ and $0\leq \Norm{\beta}<\Norm{\alpha}$. Such a $\beta$ exists since $\p$ is a non-primitive divisor. If $\beta=0$, then
	\[
	Q\equiv \beta P+Q\equiv O\mod{\p},
	\]that is absurd since $Q$ is not the identity modulo $\p$. Hence, $\beta\neq0$. Therefore, for some $s\in \s$, \[\Norm{\beta}=\Norm{\alpha+\alpha s}=\Norm{\alpha}\Norm{1+s}\geq \Norm{\alpha}.\] This is absurd since we assumed $\Norm{\beta}<\Norm{\alpha}$. So, the assumption that $\Ann_{\p}(P)=(\alpha)\s$ is absurd and then $\Ann_{\p}(P)\neq(\alpha)\s$.
\end{proof}
\begin{lemma}\label{S}
	Let $\p$ be a prime divisor of $B_\alpha$ such that $\nu_\p\notin W$. Then $J_\p(P)$ is an integral $\OO$-ideal so that $(J_\p(P),\s)=1$, where $J_\p(P)$ is defined in Definition \ref{IJK}.
\end{lemma}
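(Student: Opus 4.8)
The plan is to derive both assertions from a single identity, namely $\Ann_\p(\alpha P)=\s$, combined with Lemma~\ref{Ipmid}. Integrality of $J_\p(P)$ is immediate: by Lemma~\ref{Ipmid} we have $\Ann_\p(P)\mid(\alpha)\s$, and since $J_\p(P)=(\alpha)\s\,\Ann_\p(P)^{-1}$ by Definition~\ref{IJK}, the ideal $J_\p(P)$ is integral. All the remaining work goes into the coprimality $(J_\p(P),\s)=1$.

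First I would set up the relevant identity. Since $\nu_\p\notin W$, the curve $E=E_{\OO}$ has good reduction at $\p$, so reduction modulo $\p$ is compatible with the $\OO$-action on points; in particular $R_1\equiv R_2\pmod\p$ implies $\Ann_\p(R_1)=\Ann_\p(R_2)$, and $\Ann_\p(-R)=\Ann_\p(R)$ for every $R\in E(K)$. Because $\p\mid B_\alpha$, we have $\alpha P+Q\equiv O\pmod\p$, that is $\alpha P\equiv -Q\pmod\p$; hence
\[
\Ann_\p(\alpha P)=\Ann_\p(-Q)=\Ann_\p(Q)=\s,
\]
where the last equality is Lemma~\ref{finiteS} applied to the good prime $\p$ (cf.\ Definition~\ref{defW}).

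Next I would turn $\Ann_\p(\alpha P)=\s$ into a statement about ideal quotients. For $i\in\OO$ one has $i\in\Ann_\p(\alpha P)$ exactly when $i\alpha\in\Ann_\p(P)$, i.e.\ when $i(\alpha)\subseteq\Ann_\p(P)$; therefore $\s=\bigl(\Ann_\p(P):(\alpha)\bigr)$. Now I use that $\OO$ is a Dedekind domain: writing $v_\q$ for the exponent of a prime $\OO$-ideal $\q$ in a fractional $\OO$-ideal, one has $v_\q\bigl(\Ann_\p(P):(\alpha)\bigr)=\max\bigl(0,\,v_\q(\Ann_\p(P))-v_\q((\alpha))\bigr)$. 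Consequently, for every $\q$ dividing $\s$ we have $v_\q(\s)>0$, so the maximum must equal its second argument, giving $v_\q(\Ann_\p(P))=v_\q((\alpha))+v_\q(\s)$. Hence $v_\q(J_\p(P))=v_\q((\alpha))+v_\q(\s)-v_\q(\Ann_\p(P))=0$ for all $\q\mid\s$, and together with the integrality of $J_\p(P)$ this yields $(J_\p(P),\s)=1$.

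There is no genuinely hard step here; the argument is formal once the two inputs are in place. The only point that needs a little care is the first one: that $\alpha P\equiv -Q\pmod\p$ forces $\Ann_\p(\alpha P)=\Ann_\p(Q)$, which is exactly where the hypothesis $\nu_\p\notin W$ — guaranteeing good reduction of $E$ at $\p$ and the compatibility of reduction with the action of $\OO$ — is used, and the translation of $\Ann_\p(\alpha P)=\s$ into the ideal-quotient identity, where the Dedekind hypothesis on $\OO$ enters.
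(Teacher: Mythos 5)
Your proof is correct and follows essentially the same route as the paper: the identity $\Ann_\p(\alpha P)=\s$ that you isolate is precisely what the paper's argument establishes and exploits (the inclusion $\s\subseteq\Ann_\p(\alpha P)$ being Lemma~\ref{Ipmid}, the reverse inclusion being the paper's step "$s'\alpha P\equiv O$ and $\p\mid B_\alpha$ force $s'Q\equiv O$, hence $s'\in\s$"). The only difference is bookkeeping — you finish with a prime-by-prime valuation computation where the paper manipulates the quotient ideal $\s/(\s,J_\p(P))$ — and both conclude identically.
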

\begin{proof}
	We know that $\Ann_{\p}(P)\mid (\alpha)\s$ from Lemma \ref{Ipmid}, hence $J_\p(P)$ is an integral ideal. Consider the integral ideal $\s/(\s,J_\p(P))$ and take $s'\in \s/(\s,J_\p(P))$. Observe that \[s'\alpha\in \frac{\s(\alpha)}{(\s,J_\p(P))}=\left(\frac{J_\p(P)}{(\s,J_\p(P))}\right)\Ann_{\p}(P)\subseteq \Ann_{\p}(P).\] Hence, $s'\alpha P\equiv O\mod{\p}$ and, since $\p\mid B_\alpha$,
	\[
	s'Q\equiv s'Q+s'\alpha P\equiv s'(\alpha P+Q) \equiv O \mod{\p}.
	\]
	Therefore, $s'\in \s$ since $s'Q\equiv O \mod{\p}$ only for $s'\in \s$. Here we are using again that $\nu_\p\notin W$. Then, for any $s'\in \s/(\s,J_\p(P))$, we have $s'\in \s$. So,
	\[
	\frac{\s}{(\s,J_\p(P))}=\s.
	\]
	Finally, 
	\[
	(\s,J_\p(P))=1.
	\]
\end{proof}
Observe that, if $\p$ is a non-primitive divisor of $B_\alpha$, then $J_\p(P)$ is a proper ideal. This follows from Lemma \ref{primdiv}.
\begin{corollary}\label{defK}
	Let $\p$ be a prime divisor of $B_\alpha$ such that $\nu_\p\notin W$. Then $I_\p(P)$ is an integral $\OO$-ideal, where $I_\p(P)$ is defined in Definition \ref{IJK}.
\end{corollary}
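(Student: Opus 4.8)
The plan is to read the statement off Lemma \ref{S} together with the tautological identity $I_\p(P)\,J_\p(P)=(\alpha)$ coming from Definition \ref{IJK}. Since $\OO$ is a Dedekind domain, showing that the fractional ideal $I_\p(P)=\Ann_\p(P)/\s$ is integral is the same as showing $\s\mid\Ann_\p(P)$, and, working prime by prime, it suffices to compare exponents at each prime $\OO$-ideal $\q$.

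For a prime $\q$ that does not divide $\s$ there is nothing to check: $\q$ occurs to exponent $0$ in $\s$, so it occurs in $\Ann_\p(P)/\s$ to the same (non-negative) exponent as in the integral ideal $\Ann_\p(P)$. The only primes that need an argument are those dividing $\s$, and here I would invoke the second conclusion of Lemma \ref{S}, namely $(J_\p(P),\s)=1$: this forces $\q$ to occur to exponent $0$ in $J_\p(P)$ for every $\q\mid\s$. Feeding this into $(\alpha)=I_\p(P)\,J_\p(P)$ shows that, at such a $\q$, the exponent of $\q$ in $I_\p(P)$ equals the exponent of $\q$ in $(\alpha)$, which is non-negative because $(\alpha)$ is an integral ideal. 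Hence $I_\p(P)$ has non-negative valuation at every prime and is therefore integral.

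I do not expect a genuine obstacle here: all the substance is already contained in Lemmas \ref{Ipmid} and \ref{S}. The one subtlety worth flagging is that $I_\p(P)=\Ann_\p(P)/\s$ is a priori only a fractional ideal, so it could in principle have negative valuation at the primes dividing $\s$; ruling this out is precisely the role played by the coprimality of $J_\p(P)$ with $\s$ (equivalently, by the fact that $\Ann_\p(P)$ and $(\alpha)\s$ have equal $\q$-adic valuation for every $\q\mid\s$).
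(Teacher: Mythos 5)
Your argument is correct and is essentially the paper's own proof: both deduce integrality of $I_\p(P)$ from the identity $J_\p(P)\Ann_\p(P)=(\alpha)\s$ together with the coprimality $(J_\p(P),\s)=1$ from Lemma \ref{S}, the only cosmetic difference being that you compare valuations prime by prime while the paper phrases the same deduction as "$\s$ divides $\Ann_\p(P)$".
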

\begin{proof}
	Using the definition and the previous lemma, 
	\[
	\frac{J_\p(P)\Ann_{\p}(P)}{\s}=(\alpha)
	\]
	and $(J_\p(P),\s)=1$. Therefore, $\s$ divides $\Ann_{\p}(P)$. So, \[I_\p(P)=\frac{\Ann_\p(P)}{\s}\] is integral.
\end{proof}
Observe that $I_\p(P)J_\p(P)=(\alpha)$. Moreover, if $\p$ is a non-primitive divisor of $B_\alpha$, then $I_\p(P)\neq (\alpha)$. This follows from Lemma \ref{primdiv}.
\begin{definition}\label{defI}
	Let $\I$ be the set of integral $\OO$-ideals defined as
	\[
	\I\coloneqq\{I\subseteq \OO \text{  s.t. }I\mid (\alpha) \text{ and }((\alpha)I^{-1},\s)=1 \}.
	\]
\end{definition}
Let $\p$ be a prime a divisor of $B_{\alpha}$ such that $\nu_\p\notin W$. Recall that, by definition, $I_\p(P)J_\p(P)=(\alpha)$. Observe that $I_\p(P)\in \I$. This follows from Lemma \ref{S} and Corollary \ref{defK}. We want to define a divisibility sequence $\B_I$ indexed by $I\in \I$.
\begin{definition}\label{defJ}
	Let $\J$ be the the set of integral $\OO$-ideals defined as
	\[
	\J\coloneqq\{J\subseteq \OO \text{  s.t. }J\mid (\alpha) \text{ and }(J,\s)=1 \}.
	\]
\end{definition}
Observe that the map $\psi:\I\to \J$, defined as
\[
\psi(I)=\frac{(\alpha)}{I},
\]
is well-defined and it is a bijection. This follows easily from Definition \ref{defI} and \ref{defJ}.

We defined the sets $\I$ and $\J$ since, for every good prime $\p$, we have $I_\p(P)\in \I$ and $J_\p(P)\in \J$.
\subsection{The sequence \texorpdfstring{$\B_I$}{}}
Let $\p$ be a prime divisor of $B_\alpha$ such that $\nu_\p\notin W$. Then, thanks to Lemma \ref{S}, $J_\p(P)\in \J$ and, thanks to Corollary \ref{defK}, $I_\p(P)\in \I$. We would like to define a sequence of integral $\OO_K$-ideals $\B_I$, indexed by the ideals $I\in \I$, with the following properties:
\begin{itemize}
	\item $B_\alpha=\B_{(\alpha)}$;
	\item if $\p$ is a divisor of $B_\alpha$, then $\p$ divides $\B_{I_\p(P)}$;
	\item if $\nu_\p(\B_I)>0$ for $I\in \I$, then $\nu(\B_{(\alpha)})$ is close to $\nu(\B_I)$.
\end{itemize}
If we find such a sequence, then we can replicate the strategy of the proof of Silverman and Streng, as we showed in Section \ref{structure}.

This sequence will be defined in Definition \ref{BB}. In order to do this, we need some preliminary results.
\begin{lemma}
	There exists $q\in \OO$ such that $q\in J$ for every $J\in \J$ and such that
	\[
	q\equiv 1\mod{\s}.
	\]
\end{lemma}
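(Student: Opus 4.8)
The plan is to produce $q$ as a single element lying in the product of all the ideals in $\J$, and then correct it modulo $\s$ using coprimality.

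First I would observe that $\J$ is a finite nonempty set: it contains $\OO=(1)$, and every member divides the nonzero ideal $(\alpha)$, which in the Dedekind domain $\OO$ has only finitely many divisors. Write $\J=\{J_1,\dots,J_n\}$ and set $D\coloneqq J_1\cdots J_n$. A product of ideals is contained in each of its factors, so $D\subseteq J_i$ for every $i$; hence every element of $D$ lies in every $J\in\J$.

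Next comes the one point that needs a small argument, namely that $D$ is coprime to $\s$. By definition of $\J$ each $J_i$ satisfies $(J_i,\s)=\OO$, so no prime $\OO$-ideal dividing $\s$ divides any $J_i$; reading off the factorization of $D=J_1\cdots J_n$ into primes, no such prime divides $D$ either, and therefore $D+\s=\OO$. (Equivalently one could replace $D$ by the least common multiple $\bigcap_{J\in\J}J$, which is again contained in every $J$ and coprime to $\s$; the rest of the argument is unchanged.)

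Finally, since $D+\s=\OO$, choose $d\in D$ and $s\in\s$ with $d+s=1$ and put $q\coloneqq d$. Then $q\in D\subseteq J$ for every $J\in\J$, and $q=1-s\equiv 1\mod{\s}$, as required. I do not foresee any genuine obstacle here; the only thing to watch is that the product (or the lcm) of the $J_i$ stays coprime to $\s$, which is immediate from unique factorization of ideals in the Dedekind domain $\OO$.
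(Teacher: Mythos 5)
Your proof is correct and takes essentially the same approach as the paper: produce one ideal contained in every $J\in\J$ and coprime to $\s$, then write $1=d+s$ and take $q=d$. The paper uses the intersection $\bigcap_{J\in\J}J$ (the minimal element of $\J$, which lies in $\J$ and is therefore coprime to $\s$ by definition) where you use the product of all members, but this difference is immaterial.
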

\begin{proof}
	Let $J_1$ be the unique minimal ideal of $\J$ when the ideals of $\J$ are ordered by inclusion. This ideal exists since $\J$ is closed under intersection.
	Considering that $(J_1,\s)=1$, we have that there exists $q\in J_1$ so that
	\[
	q\equiv 1\mod{\s}.
	\]
	For every $J\in \J$, we have $J_1\subseteq J$. Hence, $q\in J$.
\end{proof}
\begin{definition}\label{defq}
	From now on, we fix $q\in \OO$ that verifies the hypotheses of the previous lemma.
\end{definition}
\begin{definition}\label{Q'}
	Let $R\in E(\overline{\Q})$ be so that $\alpha R=Q$. We denote $R$ with $Q/\alpha$. Define
	\[
	Q'=q\cdot\frac{Q}{\alpha}\in E(\overline{\Q}),
	\]
	where $q$ is defined in the previous definition.
	The point $Q'$ depends on the (non-unique) choice of a point $Q/\alpha$ and it is a torsion point of $E(\overline{\Q})$. From now on, the point $Q'$ is fixed.
\end{definition}
Now, we do a short recap. We are studying the sequence $\{B_\beta\}_{\beta\in \OO}=\{B_\beta(P,Q)\}_{\beta\in \OO}$. We fixed $\alpha\in \OO$. Then, we defined the set of $\OO$-ideals $\I$ and $\J$, the element $q\in \OO$ and the point $Q'$. Everything here depends on $\alpha$. Moreover, if we fix a prime $\p$, we can define the set of fractional ideals $I_\p(P)$, $J_\p(P)$ and $\Ann_\p(P)$. These sets depend from $\alpha$, $\p$ and $P$.
As we said on the previous page, we want to define a sequence $\B_I$ of integral $\OO_K$-ideals for $I\in \I$. The term $\B_I$ will represent the denominator of $x(\varphi_I(P+Q'))$. 
In the last pages, we introduced plenty of notation. Hence, in order to help the reader to follow the proof of Theorem \ref{Thm1}, we give a summary table with the notation that we are using.
\begin{center}
\begin{tabular}{| c| }
 \hline $B_\alpha$ is the denominator of $x(\alpha P+Q) $\\ 
 \hline
 $\s$ is the order of $Q$ \\
 \hline  
 $\Ann_\p(R)=\{i\in \OO\mid iR\equiv O\mod \p\}$\\
 \hline
 $\varphi_I$ is an isogeny from $E$ to $E_{\overline{I}}$ such that $\Ker(\varphi_I)=I$\\\hline
 $W$ is the set of bad primes  \\
 \hline
 $I_\p(P)=\Ann_\p(P)/\s$\\
 \hline
 $J_\p(P)=(\alpha)/I_\p(P)$\\
 \hline
 $\I=\{I\subseteq \OO \text{  s.t. }I\mid (\alpha) \text{ and }((\alpha)I^{-1},\s)=1 \}$\\
 \hline
 $\J=\{J\subseteq \OO \text{  s.t. }J\mid (\alpha) \text{ and }(J,\s)=1 \}$\\\hline
 $q$ is an element of $\OO$ such that $q\in J$ for all $J\in \J$ and $q\equiv 1\mod \s$\\
 \hline
 $Q'=(qQ)/\alpha \in E(\overline{\Q})$\\\hline
 $\B_I$ is the denominator of $x(\varphi_I(P+Q'))$
 \\
 \hline
\end{tabular}
\end{center}
In order to prove that $\B_I$ is an $\OO_K$-ideal, we need two preliminary results.
\begin{lemma}\label{I1I2}
	Let $I\in \I$. Then, for every $i \in I$, the point $iP+iQ'\in E(K)$. Moreover, $iQ'$ is a multiple of $Q$. Finally, $\alpha Q'=Q$. 
\end{lemma}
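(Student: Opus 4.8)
The plan is to check the three assertions separately, in increasing order of difficulty. First I would verify that $\alpha Q'=Q$. By Definition \ref{Q'}, $\alpha Q'=\alpha\bigl(q\cdot\tfrac{Q}{\alpha}\bigr)=q\bigl(\alpha\cdot\tfrac{Q}{\alpha}\bigr)=qQ$, where I used that $\OO$ is commutative and that $\alpha\cdot\tfrac{Q}{\alpha}=Q$ by definition of $Q/\alpha$. Since $q\equiv 1\mod{\s}$ by Definition \ref{defq}, we have $q-1\in\s$, hence $(q-1)Q=O$ by the definition of $\s$ (Definition \ref{defS}); therefore $qQ=Q$ and $\alpha Q'=Q$.

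Next I would show that, for every $i\in I$, the point $iQ'$ is an $\OO$-multiple of $Q$. The key observation is that $q$ was chosen to lie in every ideal of $\J$, and in particular in $\psi(I)=(\alpha)I^{-1}$, which belongs to $\J$ since $\psi\colon\I\to\J$ is a bijection. From $q\in(\alpha)I^{-1}$ one gets $qI\subseteq(\alpha)$, so for each $i\in I$ there exists $\beta\in\OO$ with $iq=\beta\alpha$; then
\[
iQ'=iq\cdot\frac{Q}{\alpha}=\beta\alpha\cdot\frac{Q}{\alpha}=\beta Q,
\]
which is the desired multiple of $Q$.

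Finally, for $iP+iQ'\in E(K)$: since we have enlarged $K$ so that $L\subseteq K$, every endomorphism in $\OO\subseteq\End(E)$ is defined over $K$ (see \cite[Chapter II]{advanced}), so $iP\in E(K)$; and $iQ'=\beta Q$ lies in $E(K)$ because $Q\in E(K)$ and $\beta\in\OO$, so the sum $iP+iQ'$ is in $E(K)$. The only genuinely delicate point is this last one: the point $Q'$ is a priori only defined over $\overline{\Q}$, and what makes $iQ'$ rational is precisely that multiplying by an element of $I$ ``clears the denominator $\alpha$'' — this is exactly what $q\in(\alpha)I^{-1}$ provides — turning $Q'$ into a $K$-rational multiple of $Q$, while $iP$ stays in $E(K)$ thanks to the CM endomorphisms being $K$-rational once $L\subseteq K$. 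Everything else is a direct computation.
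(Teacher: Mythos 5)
Your proof is correct and follows essentially the same route as the paper: both arguments hinge on $q$ lying in $J=(\alpha)I^{-1}\in\J$, so that $iq/\alpha\in\OO$ and hence $iQ'$ is an $\OO$-multiple of $Q$, from which the $K$-rationality of $iP+iQ'$ and the identity $\alpha Q'=qQ=Q$ follow exactly as in the paper's proof. Your extra remark about the CM endomorphisms being defined over $K$ after enlarging $K$ is a harmless elaboration of a point the paper leaves implicit.
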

\begin{proof}
	Let $J=(\alpha)I^{-1}$. This is an integral ideal considering that $I\in \I$. Observe that $J\in \J$ and hence $q\in J$. Since $iq\in IJ=(\alpha)$, we have $c\coloneqq iq/\alpha\in \OO$. So, \[iQ'=\frac{iq}{\alpha}Q=cQ\] and then \[iP+iQ'=iP+cQ\in E(K).\] Finally, \[\alpha Q'=\frac{q\alpha Q}{\alpha}=qQ=Q.\] The last equality follows from $q\equiv 1 \mod{\s}$.
\end{proof}
Recall that the map $\varphi_I$, as defined in the Section \ref{prel}, is an isogeny from $E$ to $E_{\overline{I}}$ with kernel the $I$-torsion points.
\begin{lemma}
	Let $I\in \I$. Then, $\varphi_I(P+Q')\in E_{\overline{I}}(K)$.
\end{lemma}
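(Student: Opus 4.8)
The plan is to run a short Galois descent. Since the curve $E_{\overline{I}}$ and the isogeny $\varphi_I\colon E\to E_{\overline{I}}$ have been fixed in Section \ref{prel} to be defined over $K$, it suffices to prove that the point $\varphi_I(P+Q')$ is fixed by every $\sigma\in\Gal(\overline{\Q}/K)$. Note that $Q'$ itself is only a point of $E(\overline{\Q})$, so $P+Q'$ need not be $K$-rational; the whole content of the statement is that applying $\varphi_I$ makes it rational.

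So fix $\sigma\in\Gal(\overline{\Q}/K)$. First I would observe that, since $P\in E(K)$,
\[
\sigma(P+Q')-(P+Q')=\sigma(Q')-Q'.
\]
Next I would show this difference lies in $E[I]=\bigcap_{i\in I}E[i]=\ker\varphi_I$. By Lemma \ref{I1I2}, for every $i\in I$ the point $iP+iQ'=i(P+Q')$ lies in $E(K)$, hence is fixed by $\sigma$; since $i$ is an endomorphism defined over $K$ this gives
\[
i\bigl(\sigma(Q')-Q'\bigr)=i\bigl(\sigma(P+Q')-(P+Q')\bigr)=\sigma\bigl(i(P+Q')\bigr)-i(P+Q')=O.
\]
As this holds for all $i\in I$, we get $\sigma(Q')-Q'\in E[I]$.

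Finally, applying $\varphi_I$ and using $\ker\varphi_I=E[I]$ together with the fact that $\varphi_I$ is $K$-rational, hence commutes with $\sigma$, we obtain
\[
\sigma\bigl(\varphi_I(P+Q')\bigr)=\varphi_I\bigl(\sigma(P+Q')\bigr)=\varphi_I(P+Q')+\varphi_I\bigl(\sigma(Q')-Q'\bigr)=\varphi_I(P+Q').
\]
Since $\sigma$ was arbitrary, $\varphi_I(P+Q')\in E_{\overline{I}}(K)$, as claimed.

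I do not expect a real obstacle here: the argument is a routine descent whose only substantive input is the already-proved Lemma \ref{I1I2}. The only point demanding a little care is the bookkeeping of $K$-rationality — one must use that $P$, the points $i(P+Q')$ for $i\in I$, the endomorphisms in $\OO$, the isogeny $\varphi_I$ and the target $E_{\overline{I}}$ are all defined over $K$, while not inadvertently assuming the same of $Q'$ or of $P+Q'$.
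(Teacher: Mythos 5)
Your proof is correct and follows essentially the same route as the paper's: both use Lemma \ref{I1I2} to show that $\sigma(P+Q')-(P+Q')$ is killed by every $i\in I$, hence lies in $E[I]=\ker\varphi_I$, and then conclude by Galois descent using that $\varphi_I$ is defined over $K$. The extra step of rewriting the difference as $\sigma(Q')-Q'$ is harmless but not needed.
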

\begin{proof}
	Let $\sigma\in \Gal(\overline{\Q}/K)$ and then we want to show that \[\varphi_I(P+Q')=\varphi_I(P+Q')^\sigma.\] We know \[i((P+Q')^\sigma-(P+Q'))=(iP+iQ')^\sigma-(iP+iQ')=O\] for every $i\in I$ since $i(P+Q')\in E(K)$ thanks to Lemma \ref{I1I2}. So, \[(P+Q')^\sigma-(P+Q')\in E[I].\] Since the kernel of the map $\varphi_I$ is composed of the $I$-torsion points and $(P+Q')^\sigma-(P+Q')$ is an $I$-torsion point, we have that the image is the identity of $E_{\overline{I}}$. Then, \[\varphi_I(P+Q')^\sigma=\varphi_I(P+Q').\] This happens for every $\sigma\in \Gal(\overline{\Q}/K)$, so we have $\varphi_I(P+Q')\in E_{\overline{I}}(K)$.
\end{proof}
Recall that $\I$ is defined in Definition \ref{defI} and $Q'$ is defined in Definition \ref{Q'}.
\begin{definition}\label{BB}
	Let $I\in \I$.
	Thanks to the previous lemma, we know that $x(\varphi_I(P+Q'))$ is defined over $K$. So, we can define the $\OO_K$-ideals \[(x(\varphi_I(P+Q')))\OO_K=\frac{\A_I}{\B_I},\]
	where $\A_I$ and $\B_I$ are two coprime integral $\OO_K$-ideals.
\end{definition}
This definition is equivalent to the definition of $\tilde{B}_I$ given by Streng in \cite[Section 3]{streng}, where it is studied the case when $Q=O$.

Observe that, since $\alpha(P+Q')=\alpha P+Q$, we have \[B_\alpha=\B_{(\alpha)}.\]

This construction depends on $\alpha$, and, in general, $\B_{(\beta)}\neq B_\beta$. Moreover, $\B_I$ is defined only for the ideals $I\in \I$. Anyway, this sequence has two very useful properties, that we are going to prove later: it is a divisibility sequence and, if $\p$ is a non-primitive divisor of $B_\alpha$, then there exists $I\neq (\alpha)$ so that $\nu_\p(\B_I)>0$.

The goal of Section \ref{2prop} is to prove these two properties. In particular, we will show that, if $\p$ is a non-primitive divisor of $B_\alpha$, then we can bound $\nu_\p(\B_{(\alpha)})$ using $\nu_\p(\B_I)$ for $I\in \I\setminus\{(\alpha)\}$. This will be the main ingredient of the proof of Theorem \ref{Thm1}.

First, we return to the previous example.
\begin{example}
	Let $E$ be the rational elliptic curve defined by the equation $y^2=x^3-11x+890$. Let $P=(-1,30)$ and $Q=(7,34)$ be two points on the curve. We study $\{B_\alpha(P,Q)\}_{\alpha\in \Z}$. Fix $\alpha=8k+2$ for $k\in \Z$. We will focus on the $\alpha$ in this form, the generic case is similar. Since $\s=(4)$, we have
	\begin{align*}
		\I&=\left\{(d)\text{  such that } d \text{  divides  } 8k+2 \text{  and  } \left(\frac{8k+2}{d},4\right)=1\right \}
		\\&=\{(2d)\text{  such that } d \text{  divides  } 4k+1\}
	\end{align*}
	and 
	\[
	\J=\{(d)\text{  such that } d\text{  divides  } 4k+1\}.
	\]
	Take $q=4k+1$ and then $q \equiv 1 \mod{\s}$. Moreover, for every $J\in \J$, $q\in J$. Hence, we can define
	\[
	Q'=\frac{qQ}{\alpha}=\frac{Q}{2}.
	\]
	If $I=(2d)\in \I$, then $\varphi_I$ is the multiplication by $2d$. Therefore
	\[
	\varphi_I(Q')=2d\frac Q2=dQ
	\]
	and $\B_I$ is the denominator of $x(2dP+dQ)$.
	
	Since $I\in \I$, we have that $d$ is odd. If $d\equiv 1\mod{4}$, then $dQ=Q$ and $B_{2d}(P,Q)=\B_{(2d)}$. Otherwise, $d\equiv 3\mod {4}$ and $dQ=3Q$. So, $\B_{(2d)}=B_{2d}(P,3Q)$ and $B_{2d}(P,Q)\neq\B_{(2d)}$. For example, the denominator of $6P+Q$ is \[B_6(P,Q)=(43)^2\cdot(59)^2\cdot(3421265013773)^2\] and the denominator of $6P+3Q$ is \[\B_6=(19)^2\cdot (727)^2\cdot(102625619)^2\cdot(4877)^2.\] We will show later that, if $\p$ is a non-primitive divisor of $B_{\alpha}$, then there exists $I\in \I\setminus\{(\alpha)\}$ so that $\p$ divides $\B_{I}$. Take $\p=(19)$ and $\alpha=10$. The prime $\p$ is a non-primitive divisor of $B_{10}$ since it divides $B_2$. So, we expect that $\p$ divides $\B_{2d}$ for $d$ a proper divisor of $5=\alpha/2$. This is true since $\B_{2\cdot 1}=(19)^2$.
\end{example}
\subsection{Properties of the sequence \texorpdfstring{$\B_I$}{}}\label{2prop}
As we said, the goal of this section is to show how to bound $\nu(\B_{(\alpha)})$ using $\nu(\B_I)$. 

The next lemmas are necessary in order to prove the most important result of the next sections: Proposition \ref{nuW}.

\begin{lemma}\label{2nu}
	Let $I_1, I_2 \in \I$ with $I_1\mid I_2$ and $\nu\notin W$. Assume $\nu(\B_{I_1})>0$. Therefore, 
	\[
	\nu(\B_{I_2})=\nu(\B_{I_1})+2\nu(I_2)-2\nu(I_1).
	\] 
\end{lemma}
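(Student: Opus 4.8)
The plan is to reduce this statement to Lemma \ref{315}, which is the corresponding statement about $\nu(x(\varphi_J(R)))$ for a fixed point $R$ when $\nu \notin W$. The key point is that $\B_I$ is, by Definition \ref{BB}, essentially the denominator of $x(\varphi_I(P+Q'))$, so I want to apply Lemma \ref{315} with $R = P+Q'$, $I=I_1$ and $J=I_2$. First I would record that $\B_I = \B_{I_1}$ has $\nu(\B_{I_1})>0$ exactly when $\nu(x(\varphi_{I_1}(P+Q')))<0$; this is the standard correspondence between a prime dividing the denominator ideal and the point reducing to $O$ modulo $\p$ (stated in Section \ref{prel}), together with the fact that $\nu \notin W$ so $E_{\overline{I_1}}$ has good reduction at $\nu$ and the denominator of $x(\varphi_{I_1}(P+Q'))$ is genuinely the $\p$-part we expect, i.e. $\nu(\B_{I_1}) = -\nu(x(\varphi_{I_1}(P+Q')))$. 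Since $I_1 \mid I_2$ and both lie in $\I$, they are integral $\OO$-ideals with $I_1 \mid I_2$, which is exactly the divisibility hypothesis needed in Lemma \ref{315}.

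Next I would invoke Lemma \ref{315} directly: with $\nu \notin W$, $I_1 \mid I_2$, and $\nu(x(\varphi_{I_1}(P+Q')))<0$, it gives
\[
\nu(x(\varphi_{I_2}(P+Q'))) = \nu(x(\varphi_{I_1}(P+Q'))) - 2\nu(I_1) + 2\nu(I_2).
\]
In particular the right-hand side is still negative (since $\nu(I_2) \geq \nu(I_1)$ because $I_1 \mid I_2$, and the first term is already $\leq -1$), so $\nu(\B_{I_2}) = -\nu(x(\varphi_{I_2}(P+Q')))>0$ as well, and negating the displayed identity yields
\[
\nu(\B_{I_2}) = \nu(\B_{I_1}) + 2\nu(I_2) - 2\nu(I_1),
\]
which is the claim.

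The only genuinely delicate point — and the one I would be most careful about — is the first step: justifying that $\nu(\B_{I})$ equals $-\nu(x(\varphi_I(P+Q')))$ rather than merely having the same sign. This needs that $\varphi_I(P+Q') \in E_{\overline{I}}(K)$ (already proved in the lemma preceding Definition \ref{BB}) and that $E_{\overline{I}}$ has good reduction at $\nu$, which holds because $\nu \notin W$ (the set $W$ was defined in Definition \ref{defW} precisely to exclude the places of bad reduction of every $E_{\overline{I}}$). Given good reduction, the coprime factorization $(x(\varphi_I(P+Q')))\OO_K = \A_I/\B_I$ of Definition \ref{BB} makes $\nu(\B_I) = \max\{0, -\nu(x(\varphi_I(P+Q')))\}$, so $\nu(\B_I)>0 \iff \nu(x(\varphi_I(P+Q')))<0$ and the two quantities are negatives of each other. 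Everything after that is a direct application of Lemma \ref{315}, so there is no real obstacle once this bookkeeping is in place.
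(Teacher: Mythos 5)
Your proposal is correct and is essentially identical to the paper's proof, which simply reads ``We apply Lemma \ref{315}, using that $\nu\notin W$''; you have just spelled out the bookkeeping (the identity $\nu(\B_I)=\max\{0,-\nu(x(\varphi_I(P+Q')))\}$ coming from the coprime factorization in Definition \ref{BB}) that the paper leaves implicit.
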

\begin{proof}
	We apply Lemma \ref{315}, using that $\nu\notin W$.
\end{proof}
\begin{definition}\label{defIp}
	Let $\p$ be so that $\nu_\p\notin W$. Define the set $\I_\p$ of integral $\OO_K$-ideals as \[\I_\p\coloneqq\{I\in \I\text{ such that } \nu_\p(\B_I)>0 \}\subseteq \I.\]
\end{definition}
We want to describe the set $\I_\p$.
In the next lemmas, we will prove that, if $\p$ is a divisor of $B_\alpha$, then $I_\p(P)$ is the largest ideal of $\I_\p$ when the elements of $\I_\p$ are ordered by inclusion. This will be proved in Lemma \ref{maxideal}. First of all, with the next lemmas, we show that $\nu(\B_I)>0$ for $I\subseteq I_\p(P)$ with $I\in \I$ and $\p$ a divisor of $B_\alpha$.

Recall that, if $R\in E(K)$, then $\nu_\p(x(R))<0$ if and only if $R\equiv O\mod{\p}$.
\begin{lemma}\label{I}
	Let $\p$ be a prime divisor of $B_\alpha$ such that $\nu_\p\notin W$. Let $I\subseteq I_{\p}(P)$ be an integral $\OO$-ideal in $\I$. Then, for every $i\in I$, \[\nu_\p(x(iP+iQ'))<0.\] 
\end{lemma}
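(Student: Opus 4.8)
The plan is to reduce modulo $\p$ and show directly that $iP+iQ'$ reduces to the identity $O$; by the criterion recalled in Section \ref{prel} (a point $R\in E(K)$ satisfies $R\equiv O\pmod{\p}$ if and only if $\nu_\p(x(R))<0$), this is precisely the desired conclusion.

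First I would rewrite $iP+iQ'$ using only the $K$-rational points $P$ and $Q$. Since $I\in\I$ and $i\in I$, Lemma \ref{I1I2} gives $iQ'=cQ$ with $c\coloneqq iq/\alpha\in\OO$; in particular $c\alpha=iq$, and $iP+iQ'=iP+cQ\in E(K)$, so its $x$-coordinate makes sense and lies in $K$.

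Next I would reduce modulo $\p$. Because $\nu_\p\notin W$, the curve $E=E_\OO$ has good reduction at $\p$, so reduction modulo $\p$ is compatible with the group law. From $\p\mid B_\alpha$ we have $\alpha P+Q\equiv O\pmod{\p}$, hence $Q\equiv-\alpha P\pmod{\p}$ and therefore $cQ\equiv-c\alpha P=-iqP\pmod{\p}$. Adding $iP$ to both sides gives
\[
iP+iQ'=iP+cQ\equiv iP-iqP=i(1-q)P\pmod{\p}.
\]
Now $i\in I\subseteq I_\p(P)$ and $1-q\in\s$ (since $q\equiv 1\pmod{\s}$), so $i(1-q)\in I_\p(P)\,\s=\Ann_\p(P)$, which means $i(1-q)P\equiv O\pmod{\p}$. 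Hence $iP+iQ'\equiv O\pmod{\p}$, and the lemma follows.

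I do not expect a genuine obstacle here: the proof is a short computation. The only points needing care are the bookkeeping with fractional ideals — checking that $i(1-q)$ lands in $\Ann_\p(P)$ via the identity $I_\p(P)\,\s=\Ann_\p(P)$ — and using $c\alpha=iq$ to clear $\alpha$ after the substitution $Q\equiv-\alpha P$. The degenerate possibilities $i=0$ or $iP+cQ=O$ cause no trouble, being consistent with the convention that then $\nu_\p(x(\cdot))<0$.
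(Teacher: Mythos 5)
Your proof is correct and is essentially the paper's own argument: both reduce to showing $iP+iQ'\equiv iP+cQ\equiv O\pmod{\p}$ via $iq=c\alpha$, the congruence $\alpha P+Q\equiv O\pmod{\p}$, and the observation that $i(q-1)\in\s I\subseteq\s I_\p(P)=\Ann_\p(P)$. The only difference is cosmetic (you substitute $Q\equiv-\alpha P$ and isolate $i(1-q)P$, whereas the paper adds $c(\alpha P+Q)$ and $(1-q)iP$ and simplifies), so there is nothing further to compare.
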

\begin{proof}
	Let $J=(\alpha)I^{-1}$. Note that $J$ is coprime to $\s$. Therefore, $J\in \J$ and $q\in J$.
	Fix $i\in I$. We know $IJ=(\alpha)$ and then $iq=c\alpha$ for some $c\in \OO$. Recall that $q\equiv 1 \mod{\s}$ and observe that \[(q-1)i\in \s I\subseteq \s I_{\p}(P)= \Ann_{\p}(P).\] Then, $(1-q)iP\equiv O\mod{\p}$ and
	\[
	O\equiv c(\alpha P+Q)+(1-q)iP\equiv iqP+cQ+(1-q)iP\equiv iP+c Q \mod \p.
	\]	
	We conclude observing that
	\[
	iQ'=\frac{iq Q}{\alpha}=cQ
	\]
	and then
	\[
	iP+iQ'\equiv iP+cQ\equiv O\mod{\p}.
	\]
\end{proof}
\begin{lemma}
	Let $I\in \I$ and $\nu\notin W$. 	Let
		\[
		v_I\coloneqq\min\{-\nu(x(i(P+Q')))\mid i\in I\}.
		\]
		Then, \[\nu(\B_I)=v_I.\]
\end{lemma}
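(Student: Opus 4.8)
The plan is to compare the denominator of $x(\varphi_I(P+Q'))$ with the denominators of the points $i(P+Q')$, $i\in I$, by factoring the multiplication maps through $\varphi_I$ and then transferring valuations by the formal-group estimate behind Lemma~\ref{315}. Write $S\coloneqq\varphi_I(P+Q')\in E_{\overline I}(K)$, so that by Definition~\ref{BB} one has $\nu(\B_I)=\max\{0,-\nu(x(S))\}$. For $i\in I$ we have $(i)\subseteq I$, i.e.\ $I\mid(i)$; put $J_i\coloneqq(i)I^{-1}$, an integral $\OO$-ideal, and factor $\varphi_{(i)}=g_i\circ\varphi_I$ with $g_i\colon E_{\overline I}\to E_{\overline{(i)}}=E$ an isogeny of degree $\Norm{J_i}$, defined over $K$. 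Since $[i]$ and $\varphi_{(i)}$ differ by an automorphism of $E$, which multiplies the $x$-coordinate by a root of unity, we get $\nu(x(i(P+Q')))=\nu(x(g_i(S)))$ at every finite place; the index $i=0$ gives the point $O$ and contributes $+\infty$ to the minimum, so it may be ignored. Thus everything is reduced to comparing $\nu(x(S))$ with the numbers $\nu(x(g_i(S)))$, $i\in I$.

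For the inequality $v_I\ge\nu(\B_I)$ I would show $-\nu(x(i(P+Q')))\ge\nu(\B_I)$ for every $i\in I$. If $S\equiv O\bmod\p$ (equivalently $\p\mid\B_I$), then $g_i(S)\equiv O\bmod\p$ as well, and the induced homomorphism of formal groups over $\OO_{K_\p}$ has integral coefficients, hence cannot decrease the valuation of the formal parameter; therefore $-\nu(x(g_i(S)))\ge-\nu(x(S))=\nu(\B_I)$. This is exactly the computation underlying Lemma~\ref{315}, run now with the point $S\in E_{\overline I}(K)$ as base point and the isogeny $g_i$ emanating from $E_{\overline I}$; the hypothesis $\nu\notin W$ — good reduction of every $E_{\overline J}$, $\nu$ unramified, $\nu(C_{\overline J})=0$ — is precisely what lets the argument of Lemma~\ref{315a} go through in this slightly more general setting. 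If instead $S\not\equiv O\bmod\p$, then $\nu(\B_I)=0$ and the inequality amounts to the assertion that no $i(P+Q')$ reduces to a non-identity point with vanishing $x$-coordinate; this follows from $\nu\notin W$ together with the identity $\ker(\widetilde{\varphi_I})=\widetilde{E[I]}$, which holds scheme-theoretically over $k_\p$ and hence on $k_\p$-points.

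For the reverse inequality $v_I\le\nu(\B_I)$ I would exhibit one $i_0\in I$ attaining the minimum. Since $\OO$ is a Dedekind domain, the ideal class of $I^{-1}$ contains an integral representative $J_0$ coprime to $\p$; writing $IJ_0=(i_0)$ gives $i_0\in I$ with $J_{i_0}=J_0$, so $g_{i_0}$ has degree prime to the residue characteristic of $\p$. Such an isogeny is \'etale at the origin in the reduction, so the associated formal-group homomorphism has a unit linear coefficient and preserves the valuation of the formal parameter exactly; consequently $\nu(x(g_{i_0}(S)))=\nu(x(S))$ whenever $S\equiv O\bmod\p$, giving $-\nu(x(i_0(P+Q')))=-\nu(x(S))=\nu(\B_I)$, while if $S\not\equiv O$ one gets $\nu(x(g_{i_0}(S)))=0$ because $\widetilde{P+Q'}\notin\ker(\widetilde{\varphi_I})=\widetilde{E[I]}$ forces $\widetilde{i_0(P+Q')}=i_0\cdot\widetilde{P+Q'}$ to be neither $O$ nor a point with $x\equiv0\bmod\p$. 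Combining the two inequalities yields $\nu(\B_I)=v_I$.

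The main obstacle is making the two formal-group estimates rigorous for the isogenies $g_i\colon E_{\overline I}\to E$: Lemma~\ref{315} is stated for a point of $E(K)$ and for the isogenies $\varphi_J$ emanating from $E$, whereas here the natural source curve is $E_{\overline I}$ and the relevant maps are the $g_i$, so one must re-run the proof of Lemma~\ref{315a} with $E$ replaced by $E_{\overline I}$ — legitimate precisely because $\nu\notin W$ forces good reduction, non-ramification and vanishing of the correction ideals $C_{\overline J}$ for all the curves $E_{\overline J}$ simultaneously. The secondary point requiring care is the case $\p\nmid\B_I$, where one has to rule out that some $i(P+Q')$ reduces to a non-identity point with $x$-coordinate congruent to $0$ modulo $\p$, so that $-\nu(x(i(P+Q')))$ cannot become negative; this too is governed by $\nu\notin W$ through the scheme-theoretic description of $\ker(\widetilde{\varphi_I})$.
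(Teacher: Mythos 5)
The paper does not actually reprove this statement: its entire proof is a one-line reduction to \cite[Lemma 3.17]{streng}, after observing that $I$ is invertible because $\OO$ is a Dedekind domain. Your proposal is therefore a from-scratch reconstruction of Streng's argument, and its skeleton is the right one: factor $[i]=g_i\circ\varphi_I$ with $g_i$ of degree $\Norm{(i)I^{-1}}$, treat the case $\varphi_I(P+Q')\equiv O\bmod\p$ by the integrality of the induced formal-group homomorphism (which gives $\geq$) together with a $g_{i_0}$ of degree prime to the residue characteristic (which gives equality), and treat the remaining case via the comparison of $\ker(\tilde{\varphi}_I)$ with $\tilde{E}[I]$. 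The two formal-group estimates are correct and can indeed be justified by re-running the argument behind Lemma \ref{315} with $E_{\overline{I}}$ as source curve, since $\nu\notin W$ gives good reduction, non-ramification and vanishing of the $C_{\overline{J}}$ for all the curves at once.

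There are, however, two genuine gaps, both in the case $\varphi_I(P+Q')\not\equiv O\bmod\p$. First, you need that no $i(P+Q')$ reduces to a non-identity point of $\tilde{E}$ whose $x$-coordinate is $0$ in $k_\p$ (otherwise $-\nu(x(i(P+Q')))<0$ and $v_I<0=\nu(\B_I)$). You assert this follows from $\nu\notin W$ and the identity $\ker(\tilde{\varphi}_I)=\widetilde{E[I]}$, but it does not: that identity only controls when $i(P+Q')$ reduces to $O$, and nothing in Definition \ref{defW} prevents some multiple of $P+Q'$ from reducing to an affine point with $x\equiv 0\bmod\p$. This is really a defect of the statement as transcribed from Streng: the lemma is only correct with $-\nu(x(i(P+Q')))$ replaced by the valuation of the denominator ideal of $x(i(P+Q'))$, i.e.\ by $\max\{0,-\nu(x(i(P+Q')))\}$, which is how it is used later (Lemma \ref{nu(D)} only needs the positivity criterion, which is insensitive to this truncation). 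Second, in the upper bound your chosen $i_0$ need not work: from $\widetilde{P+Q'}\notin\ker(\tilde{\varphi}_I)=\tilde{E}[I]$ you cannot conclude $i_0\cdot\widetilde{P+Q'}\neq O$, because the kernel of the reduction of $[i_0]$ is $\tilde{E}[IJ_0]$, which strictly contains $\tilde{E}[I]$ whenever $J_0\neq\OO$ — and $J_0\neq\OO$ is forced when $I$ is not principal. The correct argument in this case is by contradiction: if every $i\in I$ killed $\widetilde{P+Q'}$, then $\widetilde{P+Q'}\in\bigcap_{i\in I}\tilde{E}[i]=\tilde{E}[I]=\ker(\tilde{\varphi}_I)$, contradicting $\varphi_I(P+Q')\not\equiv O\bmod\p$; the witness $i$ must be extracted from this contradiction, not from the coprime-to-$p$ construction.
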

\begin{proof}
	This is proved in \cite[Lemma 3.17]{streng}. In order to apply \cite[Lemma 3.17]{streng}, we need that $I$ is an invertible ideal. This is the case since in the Dedekind domain $\OO$ every integral ideal is invertible.
\end{proof}
\begin{lemma}\label{nu(D)}
	Let $I\in \I$ and $\nu\notin W$. Then, $\nu(\B_I)>0$ if and only if $\nu_\p(x(iP+iQ'))<0$ for every $i\in I$.
\end{lemma}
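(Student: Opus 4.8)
The statement to prove is Lemma (labeled for \texttt{nu(D)}): for $I\in\I$ and $\nu\notin W$, one has $\nu(\B_I)>0$ if and only if $\nu_\p(x(iP+iQ'))<0$ for every $i\in I$, where $\p$ is the prime associated to $\nu$. The plan is to derive this directly from the two immediately preceding lemmas: the one stating $\nu(\B_I)=v_I$ with $v_I=\min\{-\nu(x(i(P+Q')))\mid i\in I\}$, and (for the forward direction) Lemma \ref{2nu} together with Lemma \ref{I}. In fact the ``hard work'' has already been done; what remains is essentially unwinding definitions.

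First I would handle the easy implication. Suppose $\nu_\p(x(iP+iQ'))<0$ for every $i\in I$. Then $-\nu(x(i(P+Q')))>0$ for all nonzero $i\in I$ (note $i(P+Q')=iP+iQ'$ by Lemma \ref{I1I2}, which guarantees the point lies in $E(K)$ so the valuation makes sense), hence the minimum $v_I$ defining $\nu(\B_I)$ is a minimum of strictly positive integers, so $v_I>0$; since $\nu(\B_I)=v_I$ by the preceding lemma, $\nu(\B_I)>0$. (One should be slightly careful that $I\neq 0$, which holds since $I\mid(\alpha)$ and we may assume $\alpha\neq 0$, and that the case $i=0$ is vacuous or excluded as in the definition of $v_I$.)

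For the converse, suppose $\nu(\B_I)>0$, i.e.\ $v_I>0$. By definition of $v_I$ there is some $i_0\in I$ with $-\nu(x(i_0(P+Q')))=v_I>0$, so $i_0(P+Q')\equiv O\bmod\p$. The point is now to propagate this to \emph{every} $i\in I$. I would argue as follows: the set $\Ann_\p(P+Q')\coloneqq\{i\in\OO\mid i(P+Q')\equiv O\bmod\p\}$ is an $\OO$-ideal, and it contains $i_0$; one then wants to show $I\subseteq\Ann_\p(P+Q')$. Since $\OO$ is a Dedekind domain and $i_0\in I\cap\Ann_\p(P+Q')$, it suffices to observe that $v_I>0$ forces $I\subseteq\Ann_\p(P+Q')$ directly — indeed for the smallest-norm generator behavior one instead invokes Lemma \ref{2nu}: if some $i\in I$ had $i(P+Q')\not\equiv O\bmod\p$, then taking the ideal $I'=I\cdot\Ann_\p(P+Q')^{-1}\cap\OO$ or more simply comparing $\B_I$ with $\B_{I_\p(P)}$ via Lemma \ref{2nu} and using that $\nu(\B_{I_\p(P)})>0$ (from Lemma \ref{I}) leads to the conclusion that $\B_I$ is supported exactly on the ideals contained in $I_\p(P)$. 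Concretely: $v_I>0$ means $\nu(x(i(P+Q')))\le -v_I<0$ for \emph{all} $i\in I$, because $v_I$ is defined as a minimum over all $i\in I$ of $-\nu(x(i(P+Q')))$, so every term in that minimum is $\ge v_I>0$, giving $\nu(x(i(P+Q')))\le -v_I<0$ for every $i\in I$. That is exactly the claim.

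The only subtlety — and the step I would watch most carefully — is the interpretation of the minimum defining $v_I$: it is a minimum over all $i\in I$, so $v_I>0$ automatically gives $-\nu(x(i(P+Q')))\ge v_I>0$ for each individual $i$, which is the whole content of the converse. So in the end both directions reduce to the identity $\nu(\B_I)=v_I$ and the elementary fact that a minimum of a set of integers is positive iff every element of the set is positive. I would phrase the proof in two short sentences, citing the preceding lemma for $\nu(\B_I)=v_I$ and Lemma \ref{I1I2} to justify that $i(P+Q')=iP+iQ'\in E(K)$ so that the valuations are of honest $K$-rational quantities.
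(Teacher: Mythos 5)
Your proof is correct and is essentially the paper's own argument: the paper likewise reduces the statement to the identity $\nu(\B_I)=v_I$ from the preceding lemma and the observation that a minimum over $i\in I$ is positive exactly when every term is positive. The detour through $\Ann_\p(P+Q')$ and Lemma \ref{2nu} in your converse direction is unnecessary, as you yourself note when you land on the direct argument.
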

\begin{proof}
	Observe that $v_I>0$ if and only if $\nu_\p(x(iP+iQ'))<0$ for every $i\in I$. We conclude using the previous lemma.
\end{proof}
\begin{corollary}\label{cornu0}
	Let $\p$ be a prime divisor of $B_\alpha$ such that $\nu_\p\notin W$. Let $I\subseteq I_{\p}(P)$ be an integral $\OO$-ideal in $\I$. Then,
	\[
	\nu_\p(\B_I)>0.
	\]
\end{corollary}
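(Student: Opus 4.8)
The plan is to obtain the statement immediately by composing the two lemmas just established, Lemma~\ref{I} and Lemma~\ref{nu(D)}; no new ingredient is required. The hypotheses of the corollary are literally the hypotheses of Lemma~\ref{I}: $\p$ is a prime divisor of $B_\alpha$ with $\nu_\p\notin W$, and $I\subseteq I_\p(P)$ is an integral $\OO$-ideal belonging to $\I$. So I would first apply Lemma~\ref{I} verbatim to conclude that $\nu_\p(x(iP+iQ'))<0$ for every $i\in I$.

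Then I would invoke Lemma~\ref{nu(D)}, whose content is precisely the equivalence ``$\nu_\p(\B_I)>0$ if and only if $\nu_\p(x(iP+iQ'))<0$ for every $i\in I$'', valid for $I\in\I$ and $\nu_\p\notin W$. Since the condition on the right was just verified, we get $\nu_\p(\B_I)>0$, which is the claim. This is a two-line argument.

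I do not expect any obstacle, because the real work has already been absorbed into the two cited lemmas. Lemma~\ref{I} is where the definitions of $q$, $Q'$, and $\Ann_\p(P)$ are unwound: writing $J=(\alpha)I^{-1}\in\J$ so that $q\in J$ and $iq=c\alpha$ with $c\in\OO$, and using $(q-1)i\in\s I\subseteq\s I_\p(P)=\Ann_\p(P)$, one gets $iP+iQ'\equiv iP+cQ\equiv O\pmod{\p}$, i.e. $\nu_\p(x(iP+iQ'))<0$. Lemma~\ref{nu(D)} in turn rests on the identity $\nu(\B_I)=v_I=\min\{-\nu(x(i(P+Q')))\mid i\in I\}$ from the preceding lemma (which uses the invertibility of ideals in the Dedekind domain $\OO$). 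The corollary is simply the concatenation of these facts.
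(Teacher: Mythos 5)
Your proposal is correct and matches the paper's own argument, which likewise deduces the corollary immediately by combining Lemma~\ref{I} with Lemma~\ref{nu(D)} (and the intermediate lemma identifying $\nu(\B_I)$ with $v_I$). No issues.
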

\begin{proof}
	This follows easily from the last three lemmas.
\end{proof}
Thanks to the last corollary, we know that, if $I\subseteq I_\p(P)$, then $I\in \I_\p(P)$.
Now, we show that $I_\p(P)$ is the largest ideal of $\I_\p$ when the elements of $\I_\p$ are ordered by inclusion.
\begin{lemma}\label{max}
	Let $\p$ be a prime $\OO$-ideal such that $\nu_\p\notin W$. If $\I_\p$ is not empty, then it has a unique maximal element when the elements of $\I_\p$ are ordered by inclusion.
\end{lemma}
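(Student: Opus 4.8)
The plan is to exploit the fact that membership in $\I_\p$ is governed by a congruence condition which is stable under taking sums of ideals, so that the sum of all members of $\I_\p$ is its maximum. First I would record the relevant criterion: combining Lemma~\ref{nu(D)} with the fact recalled just before Lemma~\ref{I} (that $\nu_\p(x(R))<0$ if and only if $R\equiv O\pmod{\p}$ for $R\in E(K)$), one gets, for $\nu_\p\notin W$ and $I\in\I$,
\[
I\in\I_\p\quad\Longleftrightarrow\quad iP+iQ'\equiv O\pmod{\p}\ \text{ for every }i\in I,
\]
which is meaningful because $iP+iQ'\in E(K)$ whenever $i\in I$ and $I\in\I$, by Lemma~\ref{I1I2}.

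Next I would check that $\I$ itself is closed under sums of ideals: if $I_1,I_2\in\I$, then $I_1+I_2=\gcd(I_1,I_2)$ still divides $(\alpha)$, and its cofactor $(\alpha)(I_1+I_2)^{-1}=\operatorname{lcm}\bigl((\alpha)I_1^{-1},(\alpha)I_2^{-1}\bigr)$ is still coprime to $\s$; the same computation handles any finite sum. Assuming $\I_\p\neq\emptyset$, I would then set $M\coloneqq\sum_{I\in\I_\p}I\in\I$ and claim it is the maximum of $\I_\p$ in the inclusion order. To see $M\in\I_\p$, take any $k\in M$, write $k=\sum_{I\in\I_\p}i_I$ with $i_I\in I$ (finitely many nonzero), and use additivity of reduction modulo $\p$ together with the criterion to get
\[
kP+kQ'=\sum_{I\in\I_\p}(i_IP+i_IQ')\equiv O\pmod{\p};
\]
since this holds for every $k\in M$, the criterion gives $M\in\I_\p$. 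As $I\subseteq M$ for every $I\in\I_\p$, the ideal $M$ is the largest element of $\I_\p$, hence in particular its unique maximal element.

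I do not anticipate a genuine obstacle here. The one subtlety worth flagging is that $Q'$ need not lie in $E(K)$, so one cannot literally form an $\OO$-ideal "$\Ann_\p(P+Q')$" and declare $\I_\p$ to be the set of ideals in $\I$ dividing it; the argument sidesteps this by only ever reducing the points $iP+iQ'$, which do lie in $E(K)$ by Lemma~\ref{I1I2}, as $i$ ranges over ideals in $\I$. Everything else is bookkeeping with the two defining conditions of $\I$ and the additivity of reduction. (In the following lemma one expects to identify $M$ with $I_\p(P)$ when $\p\mid B_\alpha$, one inclusion being Corollary~\ref{cornu0}, but that refinement is not needed for this statement.)
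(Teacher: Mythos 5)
Your proposal is correct and matches the paper's argument essentially step for step: the paper likewise reduces the claim to closure of $\I_\p$ under ideal sums, verifies the congruence condition via Lemma \ref{nu(D)} and additivity of reduction modulo $\p$, and checks that the cofactor of the sum stays coprime to $\s$. The only cosmetic difference is that you take the sum of all members of $\I_\p$ at once, while the paper proves pairwise closure and lets finiteness of $\I_\p$ supply the unique maximal element.
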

\begin{proof}
	We just need to prove that if $I_1$ and $I_2$ belong to $\I_\p$, then $I_3\coloneqq(I_1+I_2)$ belongs to $\I_\p$. If $i_3\in I_3$, then $i_3=i_1+i_2$ for $i_1\in I_1$ and $i_2\in I_2$. So,
	\[
	i_3(P+Q')=i_1(P+Q')+i_2(P+Q').
	\]
	We know $\nu_\p(x(i_1(P+Q'))<0$ and $\nu_\p(x(i_2(P+Q'))<0$ thanks to Lemma \ref{nu(D)} and so $\nu_\p(x(i_3(P+Q'))<0$ for every $i_3\in I_3$. Then, using again Lemma \ref{nu(D)}, $\nu(\B_{I_3})>0$. Moreover, $((\alpha)I_3^{-1},\s)=1$ since the primes that divide $I_3$ are the primes that divide both $I_1$ and $I_2$. Therefore, $I_3\in \I_\p$.
\end{proof}
\begin{lemma}\label{maxideal}
	Let $\p$ be a prime divisor of $B_\alpha$ such that $\nu_\p\notin W$. Then $I_\p(P)$ is the largest ideal of $\I_\p$ when the elements of $\I_\p$ are ordered by inclusion.
\end{lemma}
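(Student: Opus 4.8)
The plan is to prove the two inclusions that together say $I_\p(P)$ is the largest element of $\I_\p$ under set containment: first that $I_\p(P)$ itself lies in $\I_\p$, and second that $I\subseteq I_\p(P)$ for every $I\in\I_\p$.

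For the first inclusion I would simply invoke Corollary \ref{cornu0}. Since $\p\mid B_\alpha$ and $\nu_\p\notin W$, the ideal $I_\p(P)$ is integral and belongs to $\I$ by Lemma \ref{S} and Corollary \ref{defK}. Taking $I=I_\p(P)$ in Corollary \ref{cornu0} then gives $\nu_\p(\B_{I_\p(P)})>0$, that is, $I_\p(P)\in\I_\p$.

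For the second inclusion, fix $I\in\I_\p$. By Lemma \ref{nu(D)}, $\nu_\p(\B_I)>0$ means $\nu_\p(x(iP+iQ'))<0$, i.e. $iP+iQ'\equiv O\mod{\p}$, for every $i\in I$. Fix such an $i$. As in Lemma \ref{I1I2}, writing $J=(\alpha)I^{-1}\in\J$ one has $q\in J$, hence $iq\in IJ=(\alpha)$ and $c\coloneqq iq/\alpha\in\OO$ with $iQ'=cQ$; thus $iP+cQ\equiv O\mod{\p}$. Multiplying by an arbitrary $s\in\s$ and using $sQ=O$, everything collapses to $siP\equiv O\mod{\p}$, so $si\in\Ann_\p(P)$. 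Letting $s$ range over $\s$ gives $i\s\subseteq\Ann_\p(P)$, hence $i\in\Ann_\p(P)\s^{-1}=I_\p(P)$; and letting $i$ range over $I$ gives $I\subseteq I_\p(P)$. Combining the two inclusions finishes the lemma.

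The argument is short because the real work sits in the preceding lemmas; the one step worth pausing on is the cancellation obtained by multiplying $iP+iQ'\equiv O$ by $s\in\s$ — this is exactly what produces membership in $\Ann_\p(P)/\s=I_\p(P)$, and it is the reason the auxiliary point $Q'$ was arranged so that $\alpha Q'=Q$ and $iQ'\in\OO Q$. One should also check that the fractional-ideal manipulations are legitimate (dividing by $\s$, and the identity that $i$ lies in $\Ann_\p(P)\s^{-1}$ precisely when $i\s\subseteq\Ann_\p(P)$), which holds because $\OO$ is a Dedekind domain and $\s$ is invertible. If one prefers, Lemma \ref{max} lets one reduce the second inclusion to comparing $I_\p(P)$ with the single unique maximal element of $\I_\p$, but handling all $I\in\I_\p$ at once is no harder.
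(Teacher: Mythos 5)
Your proof is correct, and the half of it that does the real work takes a genuinely different route from the paper. The paper first invokes Lemma \ref{max} to produce the unique maximal element $I_1\supseteq I_\p(P)$ of $\I_\p$, and then shows $I_1\subseteq I_\p(P)$ by splitting each $i_1\in I_1$ as $i_1=qi_1+(1-q)i_1$: the congruence $i_1(P+Q')=c(\alpha P+Q)+i_1(1-q)P\equiv O\bmod\p$ together with $\p\mid B_\alpha$ forces $(1-q)i_1\in\Ann_\p(P)\subseteq I_\p(P)$, while $qi_1\in I_2I_1=I_\p(P)$ after checking $I_2=I_\p(P)I_1^{-1}\in\J$. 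You instead show directly that \emph{every} $I\in\I_\p$ satisfies $I\subseteq I_\p(P)$, by multiplying the congruence $iP+cQ\equiv O\bmod\p$ by $s\in\s$ to kill the torsion part, obtaining $i\s\subseteq\Ann_\p(P)$ and hence $i\in\Ann_\p(P)\s^{-1}=I_\p(P)$ by invertibility of $\s$ in the Dedekind domain $\OO$. Your version is shorter and needs less machinery: it bypasses Lemma \ref{max} and the decomposition $1=q+(1-q)$ entirely, and it does not use the hypothesis $\p\mid B_\alpha$ for the containment direction (that hypothesis is only needed to know $I_\p(P)\in\I_\p$ via Corollary \ref{cornu0}). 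The paper's route, by contrast, keeps all the work at the level of the distinguished maximal element and reuses the same $q$-splitting idiom that appears in Lemma \ref{I}, which makes the two directions of the argument visibly symmetric. Both proofs rely on Lemma \ref{I1I2} to identify $iQ'$ with $cQ$, and both are sound.
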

\begin{remark}
	This lemma allows us to understand when $\nu(\B_I)$ is strictly positive. Indeed, we have that $\nu(\B_I)>0$ if and only if $I$ is a multiple of $I_\p(P)$.
\end{remark}
\begin{proof}
	Since $\nu(\B_{(\alpha)})=\nu(B_\alpha)>0$, we have that $I_\p(P)$ is not empty. Recall that, by definition, $I_{\p}(P)J_{\p}(P)=(\alpha)$. We know $(J_{\p}(P),\s)=1$ thanks to Lemma \ref{S}. Moreover $I_{\p}(P)\in \I_{\p}$ thanks to Corollary \ref{cornu0} and we denote with $I_1$ the unique largest ideal of $\I_\p$. So, $I_{\p}(P)\subseteq I_1$. Therefore, $I_1I_2=I_{\p}(P)$ with $I_2$ an $\OO$-ideal. We want to show that $I_1=I_{\p}(P)$. Let $J_1$ be so that $I_1J_1=(\alpha)$ with $J_1 \in \J$ since $I_1\in \I_\p\subseteq \I$.
	Let $i_1\in I_1$ and then $i_1q=c\alpha$ since $I_1J_1=(\alpha)$ and $q\in J_1$. Therefore,
	\[
	i_1(P+Q')=i_1qP+i_1(1-q)P+cQ=c(\alpha P+Q)+i_1(1-q)P.
	\]
	We know that $i_1(P+Q')$ reduces to the identity modulo $\p$ since $I_1\in\I_\p$, and so \[i_1(1-q)P\equiv O \mod{\p}.\] Thus, $i_1(1-q)\in \Ann_\p(P)= I_{\p}(P)\s\subseteq I_{\p}(P)$ for every $i_1\in I_1$. Observe that, by definition, \[J_1=\frac{(\alpha)}{I_1}=\frac{(\alpha)I_2}{I_\p(P)}=I_2J_{\p}(P).\] So, $I_2\in \J$. Hence $q\in I_2$ and $qi_1\in I_2I_1=I_{\p}(P)$. Therefore, for every $i_1\in I_1$,
	\[
	i_1=i_1q+(1-q)i_1\in I_{\p}(P)
	\]
	since both summands belong to $I_{\p}(P)$.
	In conclusion, for every $i_1\in I_1$, we have $i_1\in I_\p(P)$. So, $I_{\p}(P)=I_1$.
\end{proof}
\subsection{Bound on the valuation of \texorpdfstring{$\B_\alpha$}{}}
Now, we have the sequence of $\OO_K$-ideals $\B_I$ for $I\in \I$, we take a good prime $\p$ and we want to bound $\nu(\B_{(\alpha)})$.

Let $\p$ be a non-primitive divisor of $\B_{(\alpha)}$ such that $\nu=\nu_\p\notin W$. Then, by Lemma \ref{primdiv}, $I\coloneqq I_\p(P)$ is a proper divisor of $\alpha$ that belongs to $\I$. Thanks to Lemma \ref{maxideal}, we know that $\nu(\B_I)>0$. In order to prove Theorem \ref{Thm1} we need to bound (in some sense) $\nu(B_\alpha)=\nu(\B_{(\alpha)})$. From Lemma \ref{2nu}, we know that $\nu(\B_I)$ is close to $\nu(\B_{(\alpha)})$. So, in order to bound $\nu(\B_{(\alpha)})$, we just need to bound $\nu(\B_I)$, for every $I\in \I$ not equal to $(\alpha)$.
\begin{definition}\label{mob}
	Given $I\neq 0$ an integral $\OO$-ideal, write $I=\prod \q_i^{a_i}$ the unique factorization of $I$ as product of primes. If one of the $a_i>1$, put $\mu(I)=0$. Otherwise, put $\mu(I)=(-1)^k$, where $k$ is the number of prime divisors of $I$. This is the so-called M\"obius function.
\end{definition}
Let $M_k^0$ be the set of finite valuations of $K$.
\begin{lemma}\label{propmu}
If $I\neq \OO$ is a non-zero proper $\OO$-ideal, then
\[
\sum_{J\mid I}\mu(J)=0.
\]
Moreover, given $\nu\in M_K^0$, we have
\[
\sum_{J\mid I}\mu(J)\nu(J)=\begin{cases}
-1 \mbox{  if  } I \mbox{ is a power of } \p,\\
0 \mbox{ otherwise, }
\end{cases}
\]
where $\p$ is the prime associated with $\nu$.
\end{lemma}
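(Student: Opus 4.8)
\textbf{Proof proposal for Lemma \ref{propmu}.}

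The plan is to reduce everything to the classical Möbius identities by exploiting the unique factorization of $\OO$-ideals. Write $I = \prod_{i=1}^{r} \q_i^{a_i}$ with the $\q_i$ distinct primes and $r \geq 1$ (since $I$ is a non-zero proper ideal). The divisors $J$ of $I$ with $\mu(J) \neq 0$ are exactly the squarefree ones, i.e.\ the products $\prod_{i \in S} \q_i$ for subsets $S \subseteq \{1,\dots,r\}$, and for such $J$ one has $\mu(J) = (-1)^{|S|}$. Hence $\sum_{J \mid I} \mu(J) = \sum_{S \subseteq \{1,\dots,r\}} (-1)^{|S|} = (1-1)^r = 0$ by the binomial theorem, which gives the first identity. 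This is the same argument as over $\Z$; nothing beyond counting subsets is needed.

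For the second identity, fix a finite place $\nu$ with associated prime $\p$. Since $\nu(\cdot)$ is additive on products of ideals, $\nu(J) = \sum_{i \in S} \nu(\q_i)$ when $J = \prod_{i \in S} \q_i$, and $\nu(\q_i) = 0$ unless $\q_i = \p$. So if $\p$ is not among the $\q_i$, every squarefree divisor $J$ of $I$ has $\nu(J) = 0$ and the sum vanishes trivially. If $\p = \q_{i_0}$ for some (necessarily unique) index $i_0$, then $\nu(J) = \nu(\p)$ if $i_0 \in S$ and $\nu(J) = 0$ otherwise, so
\[
\sum_{J \mid I} \mu(J)\nu(J) = \nu(\p) \sum_{\substack{S \subseteq \{1,\dots,r\} \\ i_0 \in S}} (-1)^{|S|} = \nu(\p) \cdot (-1) \cdot \sum_{S' \subseteq \{1,\dots,r\}\setminus\{i_0\}} (-1)^{|S'|},
\]
where I substituted $S = S' \cup \{i_0\}$. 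The inner sum is $(1-1)^{r-1}$, which is $0$ if $r \geq 2$ and $1$ if $r = 1$. Thus the whole expression is $0$ unless $r = 1$, i.e.\ unless $I = \p^{a_1}$ is a power of $\p$, in which case it equals $-\nu(\p)$.

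One subtlety to address is the normalization: the statement claims the value $-1$, not $-\nu(\p)$, when $I$ is a power of $\p$. This is consistent because $\nu$ is taken to be the normalized valuation with $\nu(\p) = 1$ (the place associated to $\p$ in the sense used throughout the paper, e.g.\ in Lemma \ref{315} where $\nu(p) = 1$ for unramified $\nu$), so $\nu(J)$ for an ideal $J$ literally counts the exponent of $\p$ in $J$. I would state this explicitly at the start of the proof so that $\nu(\p) = 1$ can be substituted in the last display. I do not expect any real obstacle here; the only care needed is bookkeeping with the normalization of $\nu$ and the observation that all divisors with repeated prime factors contribute $0$ by definition of $\mu$, so the sums range effectively over subsets of the prime support of $I$.
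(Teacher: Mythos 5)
Your proof is correct and is essentially the same argument the paper has in mind: the paper's own proof consists of the single line "this follows from an inclusion–exclusion argument" with a citation to Hardy and Wright, and your subset-counting/binomial-theorem computation is exactly that argument written out. Your closing remark on the normalization is also apt: as stated for arbitrary $\nu\in M_K^0$ the second sum equals $-\nu(\p)$ rather than $-1$, but the lemma is only ever applied to places $\nu\notin W$, which are unramified, so $\nu(\p)=1$ there and the claimed value is correct where it is used.
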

\begin{proof}
This follows easily from an inclusion-exclusion argument. For some details, see \cite[Theorem 263]{hardywright}.
\end{proof}
\begin{lemma}
	Let $\p$ be a prime $\OO$-ideal such that $\p$ is not a primitive divisor of $B_{\alpha}$ and such that $\nu=\nu_\p\notin W$. Then,
	\[
	\nu\left(\B_{(\alpha)}\right)\leq -\sum_{I\in \I\setminus (\alpha)}\mu((\alpha)I^{-1})\nu(\B_{I})+2\nu(\alpha).
	\]
\end{lemma}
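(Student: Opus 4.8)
The plan is to follow Streng's inclusion--exclusion: reindex the sum over $\I$ by the set $\J$ through the bijection $\psi$, and then collapse it using the two Möbius identities of Lemma \ref{propmu}.

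Since $\p$ is a non-primitive divisor of $B_\alpha$, it divides $B_\alpha$, so $\nu(\B_{(\alpha)})>0$. Set $I_0 = I_\p(P)$ and $J_0 = J_\p(P) = (\alpha)I_0^{-1}$. By Lemma \ref{S} and Corollary \ref{defK}, $I_0 \in \I$ and $J_0 \in \J$; by Lemma \ref{primdiv}, $I_0 \neq (\alpha)$, hence $J_0$ is a nonzero proper $\OO$-ideal; and by Corollary \ref{cornu0}, $\nu(\B_{I_0}) > 0$. The structural input is Lemma \ref{maxideal} together with its remark: for $I \in \I$ one has $\nu(\B_I) > 0$ exactly when $I_0 \mid I$, and $\nu(\B_I) = 0$ otherwise, since $\B_I$ is an integral ideal. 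So only the $I$ with $I_0 \mid I \mid (\alpha)$ contribute to the sum, and for each of those Lemma \ref{2nu} gives $\nu(\B_I) = \nu(\B_{I_0}) + 2\nu(I) - 2\nu(I_0)$; applying the same lemma to $I = (\alpha)$ shows $\nu(\B_{I_0}) - 2\nu(I_0) = \nu(\B_{(\alpha)}) - 2\nu(\alpha)$, so in fact $\nu(\B_I) = \nu(\B_{(\alpha)}) - 2\nu\bigl((\alpha)I^{-1}\bigr)$ for every such $I$.

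Now I would push the sum through $\psi\colon \I \to \J$, $I \mapsto (\alpha)I^{-1}$. Using $I_0 J_0 = (\alpha)$ and $(J_0,\s) = 1$, one checks that $\psi$ restricts to a bijection between $\{I \in \I : I_0 \mid I\}$ and the set of all integral divisors of $J_0$ (every divisor $J$ of $J_0$ lies in $\J$, and its preimage $(\alpha)J^{-1}$ lies in $\{I \in \I : I_0 \mid I\}$), under which $I \neq (\alpha)$ corresponds to $J \neq \OO$. The sum becomes $\sum_{J \mid J_0,\, J \neq \OO} \mu(J)\bigl(\nu(\B_{(\alpha)}) - 2\nu(J)\bigr)$, which by the two parts of Lemma \ref{propmu} (valid since $J_0$ is nonzero and proper) equals $-\nu(\B_{(\alpha)}) - 2\delta$, where $\delta = \sum_{J \mid J_0} \mu(J)\nu(J)$ is $-1$ if $J_0$ is a power of $\p$ and $0$ otherwise. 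Hence the right-hand side of the claimed inequality is $\nu(\B_{(\alpha)}) + 2\delta + 2\nu(\alpha)$, and the whole statement reduces to $\nu(\alpha) + \delta \geq 0$.

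This last inequality is a short case check: if $\delta = 0$ it holds because $\alpha$ is an algebraic integer and $\nu$ is finite; if $\delta = -1$ then $J_0$ is a nontrivial power of $\p$, so $\p \mid J_0 \mid (\alpha)$ and thus $\nu(\alpha) \geq 1$. The only real work I anticipate is bookkeeping: verifying that the restricted $\psi$ lands precisely on the divisors of $J_0$, and keeping the normalization of $\nu$ on $\OO$-ideals consistent between Lemmas \ref{2nu} and \ref{propmu}. There is no conceptual obstacle beyond the inclusion--exclusion already used by Streng.
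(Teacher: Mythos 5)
Your proposal is correct and follows essentially the same route as the paper: restrict the sum to the ideals between $I_\p(P)$ and $(\alpha)$ via Lemma \ref{maxideal}, rewrite each $\nu(\B_I)$ in terms of $\nu(\B_{(\alpha)})$ via Lemma \ref{2nu}, and collapse by the M\"obius identities of Lemma \ref{propmu}; your exact evaluation of the right-hand side (reducing everything to $\nu(\alpha)+\delta\ge 0$) is just slightly cleaner bookkeeping of the paper's telescoping bound $\sum_{I\in\I_\p}\mu((\alpha)I^{-1})\nu(\B_I)\le 2\nu(\alpha)$. The only cosmetic difference is that the paper also disposes explicitly of the degenerate case $\nu(\B_{(\alpha)})=0$, which is immediate from Lemma \ref{2nu}.
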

\begin{remark}
	The hypothesis $\p$ non-primitive divisor is necessary and it is the key point of the proof of Theorem \ref{Thm1}. We show an example where is necessary. If $Q=O$ and $\OO=\Z$, then $\B_{(k)}=B_k(P,O)$. If $\p$ is a primitive divisor of $B_n$ and $\nu_\p(n)=0$, then the inequality does not hold since $\nu_\p(\B_{(k)})=\nu_\p(B_k(P,O))=0$ for $k\mid n$ and $k\neq n$. Observe that $\Z$ is a PID, so every ideal in $\I\setminus (n)$ is in the form $(k)$ with $k$ a proper divisor of $n$.
\end{remark}
\begin{proof}
	If $\nu(\B_{(\alpha)})=0$, then $\nu(\B_I)=0$ for every $I\in \I$ thanks to Lemma \ref{2nu}. In this case, the lemma holds. So, we assume $\nu(\B_{(\alpha)})>0$. Thanks to Lemma \ref{I}, we know that $I_\p(P)$ is in the set $\I_\p$ and it is not $(\alpha)$ using Lemma \ref{primdiv}, since $\p$ is a non-primitive divisor. Moreover, $I_\p(P)\neq (\alpha)$ is the largest ideal of $\I_\p$ when the elements of $\I_\p$ are ordered by inclusion. So, $\I_\p=\{I\text{ such that } I_\p(P)\mid I \text{ and } I\mid(\alpha)\}$. So, using Lemma \ref{2nu},
	\begin{align}\label{line}
		\sum_{I\in \I_\p}\mu\left((\alpha)I^{-1}\right)\nu(\B_{I})&=\sum_{I\in \I_\p}\mu\left((\alpha)I^{-1}\right)\left(\nu(\B_{I_\p(P)})+2\nu(II_{\p}(P)^{-1})\right)\nonumber\\&=\sum_{I\in \I_\p}\mu((\alpha)I^{-1})\nu(\B_{I_\p(P)})+2\mu((\alpha)I^{-1})\nu(II_\p(P)^{-1})\nonumber\\&=\nu(\B_{I_\p(P)})\sum_{K\mid (\alpha)I_\p(P)^{-1}}\mu(K)+2\nu\left(\frac{(\alpha)}{I_\p(P)}\right)\sum_{K\mid (\alpha)I_\p(P)^{-1}}\mu(K)\nu(K^{-1})\\&\leq \nu(\B_{I_\p(P)})\cdot 0+2\nu\left(\alpha I_\p(P)^{-1}\right)\nonumber\\&\leq 2\nu(\alpha)\nonumber.
	\end{align}
	In the last inequalities, we used Lemma \ref{propmu}. In Equation (\ref{line}), we put $K=(\alpha)/I$. Therefore,
	\[
	\sum_{I\in \I_\p}\mu\left((\alpha)I^{-1}\right)\nu(\B_{I})\leq2\nu(\alpha).
	\]
	So,
	\[
	\nu(\B_{(\alpha)})\leq -\sum_{\substack{I\in \I_\p\\I\neq (\alpha)}}\mu\left((\alpha)I^{-1}\right)\nu(\B_{I})+2\nu(\alpha).
	\]
	Moreover, recalling that $\I_\p$ is the set of ideals such that $\nu(\B_I)>0$, we have
	\[\sum_{\substack{I\in \I_\p\\I\neq (\alpha)}}\mu\left((\alpha)I^{-1}\right)\nu(\B_{I})=\sum_{I\in \I\setminus (\alpha)}\mu\left((\alpha)I^{-1}\right)\nu(\B_I).
	\]
	Hence,
	\[
	\nu(\B_{(\alpha)})\leq -\sum_{I\in \I\setminus (\alpha)}\mu\left((\alpha)I^{-1}\right)\nu(\B_{I})+2\nu(\alpha).
	\]
\end{proof}
\begin{proposition}\label{nuW}
	Let $\p$ be a non-primitive divisor of $B_{\alpha}$ such that $\nu=\nu_\p\notin W$. Then,
	\[
	h_\nu(\alpha(P+Q'))\leq 2h_\nu(\alpha^{-1})+\sum_{I\in \I\setminus\{(\alpha)\}}-\mu\left((\alpha)I^{-1}\right) h_\nu(\varphi_I(P+Q')).
	\]
\end{proposition}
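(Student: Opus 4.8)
The plan is to obtain this as the ``local height'' restatement of the inequality proved in the previous lemma, namely
\[
\nu\bigl(\B_{(\alpha)}\bigr)\leq -\sum_{I\in \I\setminus (\alpha)}\mu\bigl((\alpha)I^{-1}\bigr)\nu(\B_{I})+2\nu(\alpha),
\]
so that essentially all the content is already in place and what remains is a change of language. First I would use that $\nu=\nu_\p\notin W$ is unramified over $\Q$: writing $p$ for the rational prime below $\nu$, the normalization of $M_K$ fixed in Section~\ref{prel} gives $\abs{x}_\nu=p^{-\nu(x)}$ for all $x\in K^*$ (with $\nu$ the additive valuation normalized so that $\nu(p)=1$), and hence, for any point $R$ with $x(R)\in K$ lying on $E$ or on one of the curves $E_{\overline I}$,
\[
h_\nu(R)=\max\{0,\log\abs{x(R)}_\nu\}=(\log p)\cdot\max\{0,-\nu(x(R))\}.
\]

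Next I would translate the three kinds of terms appearing in the lemma. Since $\B_I$ is by Definition~\ref{BB} the denominator of $x(\varphi_I(P+Q'))$, one has $\nu(\B_I)=\max\{0,-\nu(x(\varphi_I(P+Q')))\}$, so that $h_\nu(\varphi_I(P+Q'))=(\log p)\,\nu(\B_I)$ for every $I\in\I$; these points are never the identity because $P$ is non-torsion and $\varphi_I$ is an isogeny, so the local heights are genuinely defined. Because $\alpha(P+Q')=\alpha P+Q$ (recall $\alpha Q'=Q$ by Lemma~\ref{I1I2}) and $\B_{(\alpha)}=B_\alpha$, the same identity applied to $I=(\alpha)$ gives $h_\nu(\alpha(P+Q'))=(\log p)\,\nu(\B_{(\alpha)})$. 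Finally, after enlarging $K$ so that $L\subseteq K$ we have $\alpha\in\OO\subseteq\OO_K$, whence $\nu(\alpha)\geq 0$ and
\[
h_\nu(\alpha^{-1})=\max\{0,-\nu(\alpha^{-1})\}\log p=\nu(\alpha)\log p.
\]

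To finish I would multiply the inequality of the previous lemma through by the constant $\log p$ and substitute the three identities above, obtaining
\[
h_\nu(\alpha(P+Q'))\leq -\sum_{I\in\I\setminus\{(\alpha)\}}\mu\bigl((\alpha)I^{-1}\bigr)\,h_\nu(\varphi_I(P+Q'))+2h_\nu(\alpha^{-1}),
\]
which is the asserted bound once the minus sign is moved inside the sum. I do not expect any genuine obstacle here: the only points needing attention are that $\log p>0$, so that scaling preserves the direction of the inequality even though the summands $-\mu((\alpha)I^{-1})\nu(\B_I)$ need not all be of the same sign, and that the hypothesis ``$\p$ a non-primitive divisor'' is exactly what the previous lemma needs in order for the relevant sum to range over $\I\setminus\{(\alpha)\}$ rather than over all of $\I$.
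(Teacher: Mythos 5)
Your proposal is correct and follows essentially the same route as the paper: both proofs take the valuation inequality of the preceding lemma and rescale it by the positive constant relating $\nu$ to $\abs{\cdot}_\nu$ (the paper writes this constant as $C_\nu=-\log\abs{\pi}_\nu$ for a uniformizer $\pi$, while you specialize to $\log p$ using that $\nu\notin W$ is unramified), then identify $h_\nu(\varphi_I(P+Q'))$ with that constant times $\nu(\B_I)$ and $h_\nu(\alpha^{-1})$ with that constant times $\nu(\alpha)$. Your extra care with the $\max\{0,\cdot\}$ in the definition of $h_\nu$ and with the sign of the summands is welcome but does not change the argument.
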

\begin{proof}
Let $\pi$ be a uniformizer of $K_\nu$. Then, for $x\in K_\nu$, we have $\abs{x}_\nu=\abs{\pi}_\nu^{\nu(x)}$. Hence, for every $x\in K$ with $\nu(x)\leq 0$,
	\[h_\nu(x)=\log \abs{x}_\nu=\nu(x)(\log \abs{\pi}_\nu)=-C_\nu\nu(x)\] where the positive constant $C_\nu$ is equal to $-\log \abs{\pi}_\nu$. So, using the previous lemma,
	\begin{align*}
		h_\nu(\alpha (P+Q'))&=C_\nu\nu(\B_{(\alpha)})\\&\leq 2C_\nu\nu(\alpha)+C_\nu\sum_{I\in \I\setminus\{(\alpha)\}}-\mu\left((\alpha)I^{-1}\right) \nu(\B_I)\\&=2h_\nu(\alpha^{-1})+\sum_{I\in \I\setminus\{(\alpha)\}}-\mu\left((\alpha)I^{-1}\right) h_\nu(\varphi_I(P+Q')).
	\end{align*}
\end{proof}
\section{Study of the bad places}\label{secbadprime}
Before proceeding with the proof of Theorem \ref{Thm1}, we need to deal with the places in $W$, that we called the \textbf{bad places}. Recall that $W$ is defined in Definition \ref{defW}. We are going to show that, in order to compute $h(\varphi_I(P+Q'))$, the terms $h_\nu(\varphi_I(P+Q'))$ for $\nu \in W$ are negligible.

Recall that we fixed $P$ a non-torsion point and $Q$ a torsion point of $E(K)$. Moreover, we fix $\alpha\in \OO$, the point $Q'$ as defined in Definition \ref{Q'} and the set $\I$ as defined in Definition \ref{defI}. We assume that $\Norm{\alpha}> 1$. Since we want to prove that $B_\alpha$ has a primitive divisor for all but finitely many $\alpha$, this hypothesis does not affect the proof of the theorem.
\begin{lemma}\label{strengcor}
Fix an immersion $K\hookrightarrow \C$. There exists a constant $C_1$, depending only on $P$, such that, for all but finitely many integral $\OO$-ideals $I$, we have
\[
\abs{x(\varphi_I(P))}\leq C_1(\log \Norm{\varphi_I})^2.
\]
\end{lemma}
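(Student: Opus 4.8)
The plan is to pass to the complex‑analytic picture and reduce the assertion to a lower bound on how well the elliptic logarithm of $P$ is approximated by division points; this is essentially the estimate Streng proves in \cite{streng} while establishing Theorem~\ref{intstreng}, since the quantity $x(\varphi_I(P))$ does not involve the second point $Q$, so one option is simply to cite it. Here is the mechanism. Fix the immersion $K\hookrightarrow\C$ and the associated archimedean absolute value, and write $E(\C)=\C/\Lambda$ with $\omega_E$ corresponding to $dz$, so that $P$ has a fixed elliptic logarithm $z_P\in\C$ and $x(R)=\wp_\Lambda(z_R)$. For each $\overline I\in\Cl(\OO)$ write $E_{\overline I}(\C)=\C/\Lambda_{\overline I}$; since $\Cl(\OO)$ is finite, only finitely many lattices $\Lambda_{\overline I}$ and Weierstrass functions $\wp_{\overline I}:=\wp_{\Lambda_{\overline I}}$ occur. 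Since $E[I]=I^{-1}\Lambda/\Lambda$, the isogeny $\varphi_I$ is induced by multiplication by a scalar $\lambda_I\in K^\ast$ with $\lambda_I(I^{-1}\Lambda)=\Lambda_{\overline I}$; comparing covolumes (using $\operatorname{covol}(I^{-1}\Lambda)=\operatorname{covol}(\Lambda)/\Norm{\varphi_I}$ and that the $\Lambda_{\overline I}$ are finite in number) gives $|\lambda_I|\asymp\Norm{\varphi_I}^{1/2}$, with implied constants depending only on $E$. Hence $x(\varphi_I(P))=\wp_{\overline I}(\lambda_I z_P)$.

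Next I would use the Laurent expansion $\wp(w)=w^{-2}+O(w^2)$ near $0$, valid uniformly over the finitely many $\wp_{\overline I}$. Setting $d_I:=\operatorname{dist}(z_P,I^{-1}\Lambda)$ one has $\operatorname{dist}(\lambda_I z_P,\Lambda_{\overline I})=|\lambda_I|\,d_I$, so evaluating $\wp_{\overline I}$ at the representative of $\lambda_I z_P$ closest to $0$ yields
\[
|x(\varphi_I(P))|\ \le\ C\Bigl(1+\frac{1}{|\lambda_I|^{2}d_I^{2}}\Bigr)\ \le\ C'\Bigl(1+\frac{1}{\Norm{\varphi_I}\,d_I^{2}}\Bigr),
\]
with $C,C'$ depending only on $E$. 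Thus it suffices to show that for all but finitely many integral $\OO$-ideals $I$,
\[
d_I\ =\ \operatorname{dist}(z_P,\,I^{-1}\Lambda)\ \gg\ \frac{1}{\Norm{\varphi_I}^{1/2}\log\Norm{\varphi_I}},
\]
i.e. $P$ is never closer to an $I$-torsion point of $E$ than a factor $1/\log\Norm{\varphi_I}$ of the covering radius of $I^{-1}\Lambda$.

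This Diophantine input is the heart of the matter and the main obstacle. The clean route is to invoke the corresponding statement from \cite{streng} (it concerns only the non-torsion point $P$ and the isogenies $\varphi_I$, which here are set up exactly as there). A more self-contained argument is to note that $\varphi_I(P)\in E_{\overline I}(K)$ is a $K$-rational point with $\hat h(\varphi_I(P))=\Norm{\varphi_I}\hat h(P)$ by Lemma~\ref{canheight}, lying within $|\lambda_I|\,d_I$ of $O$ at the fixed archimedean place; since the curves $E_{\overline I}$ are finite in number and carry finitely generated Mordell--Weil groups, a counting argument shows that points of bounded canonical height cannot cluster near $O$ fast enough for $d_I$ to violate the displayed bound infinitely often (this is immediate when the relevant Mordell--Weil ranks are at most $2$, and otherwise one again relies on \cite{streng} or on lower bounds for linear forms in elliptic logarithms). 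Granting this, combining it with the inequality above gives $|x(\varphi_I(P))|\le C_1(\log\Norm{\varphi_I})^2$ for all but finitely many $I$, with $C_1$ depending only on $P$ (through $E$ and $\hat h(P)$). Everything else — the analytic description of $\varphi_I$, the bound $|\lambda_I|\asymp\Norm{\varphi_I}^{1/2}$, the $\wp$-expansion, and the finiteness of $\Cl(\OO)$ — is routine, using the material of Section~\ref{prel} and \cite[Chapter~II]{advanced}.
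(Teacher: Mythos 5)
The paper's entire proof of this lemma is a citation of \cite[Corollary 3.14]{streng}, and your primary route --- unwinding the complex-analytic picture and then invoking Streng for the key Diophantine lower bound on $\operatorname{dist}(z_P, I^{-1}\Lambda)$ --- is essentially that same approach with the reduction made explicit, so it is fine. One caution: your proposed ``self-contained'' alternative does not actually work, because $\hat h(\varphi_I(P))=\Norm{\varphi_I}\hat h(P)$ together with the comparison of $h$ and $\hat h$ only bounds $\log\abs{x(\varphi_I(P))}$ by $O(\Norm{\varphi_I})$, i.e.\ it gives exponential rather than polylogarithmic control, and no elementary counting of Mordell--Weil points of bounded height substitutes for the genuinely Diophantine input (lower bounds for linear forms in elliptic logarithms) on which Streng's corollary rests.
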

\begin{proof}
This is proved in \cite[Corollary 3.14]{streng}.
\end{proof}
\begin{lemma}
Given an elliptic curve $E'$ defined over $\C$ and $s\in \Z$, there exists a constant $C=C(E',s)$ so that, if $\abs{x(R)}>C$ for $R\in E'(\C)$, then \[\abs{x(sR)}>\abs{x(R)}/2s^2.\]
\end{lemma}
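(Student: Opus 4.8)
The plan is to exploit the fact that the multiplication-by-$s$ map, read off on $x$-coordinates, is a rational function of degree $s^2$ with a simple pole at infinity, so that near the origin the point $sR$ is ``roughly $s^2$ times closer to $O$'' than $R$. Since $x([-s]R)=x([s]R)$, and the cases $s=0,\pm 1$ are trivial (for $s=0$ one has $x(sR)=x(O)$, which we read as $\infty$, and for $s=\pm1$ the inequality is immediate), we may assume $s\geq 2$. Because $[s]$ commutes with $[-1]$ and $x$ is invariant under $[-1]$, the value $x([s]R)$ is a rational function of $x(R)$ alone: by the theory of division polynomials (see e.g. \cite[Exercise III.3.7]{arithmetic}) there are $\phi_s,\psi_s\in\C[x]$ with $\deg\phi_s=s^2$ and leading coefficient $1$, and $\deg\psi_s^2=s^2-1$ with leading coefficient $s^2$, such that
\[
x([s]R)=\theta_s(x(R)),\qquad \theta_s\coloneqq\frac{\phi_s}{\psi_s^2},
\]
whenever $sR\neq O$. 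In particular $\theta_s$ has a pole of order exactly $1$ at $x=\infty$, so
\[
\lim_{\abs{x}\to\infty}\frac{\theta_s(x)}{x}=\frac{1}{s^2}.
\]

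Given this limit, the argument is short. Applying it with $\eps=\tfrac{1}{2s^2}$, there is a constant $C_0=C_0(E',s)$ such that $\abs{x}>C_0$ forces $\bigl|\theta_s(x)/x-1/s^2\bigr|<1/(2s^2)$, hence $\abs{\theta_s(x)}>\abs{x}-(\text{lower order})$, more precisely $\abs{\theta_s(x)}>\abs{x}/(2s^2)$. Next I would enlarge $C_0$ to a constant $C=C(E',s)$ that moreover exceeds $\max\{\abs{x(T)}\colon T\in E'[s]\setminus\{O\}\}$; then $\abs{x(R)}>C$ guarantees that $R$ is not an $s$-torsion point, so $sR\neq O$, the quantity $\theta_s(x(R))$ is a genuine finite $x$-coordinate, and $x(sR)=\theta_s(x(R))$. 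Combining the two estimates gives $\abs{x(sR)}>\abs{x(R)}/(2s^2)$, as required.

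The only point requiring care — and it is entirely routine — is the input on $\theta_s$: its degree and leading terms, equivalently the ``near-$O$'' behaviour of $[s]$. One may avoid division polynomials altogether and argue through the formal group: with $w=-x/y$ the standard parameter at $O$, the endomorphism $[s]$ of $\hat E'$ has the form $[s](w)=sw+O(w^2)$, while $x=w^{-2}+O(w^{-1})$; so when $\abs{x(R)}$ is large, $w(R)$ is small, $w(sR)=s\,w(R)\bigl(1+O(w(R))\bigr)$, and $x(sR)=w(sR)^{-2}\bigl(1+O(w(sR))\bigr)=s^{-2}x(R)\bigl(1+o(1)\bigr)$, which again yields the bound once $\abs{x(R)}$ exceeds a constant depending only on $E'$ and $s$. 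I expect no genuine obstacle here; the statement is just an explicit uniformization of a fact about reduction near $O$ that is already used elsewhere in the paper via formal groups.
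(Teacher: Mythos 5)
Your proof is correct, but it takes a genuinely different route from the paper. The paper works through the complex uniformization $\C/\Lambda\cong E'(\C)$: it bounds the non-principal part of $\wp$ by a constant $C_2$ on the closed fundamental cell, chooses a radius $r$ so small that $\abs{\wp(sz)}\geq\abs{(sz)^{-2}}-C_2\geq(\abs{z^{-2}}+C_2)/(2s^2)$ for $z\in B(0,r)$, and sets $C=r^{-2}+C_2$ so that $\abs{x(R)}>C$ forces the representative $z$ into $B(0,r)$. You instead use the algebraic fact that $x(sR)=\phi_s(x(R))/\psi_s^2(x(R))$ with $\deg\phi_s=s^2$ (monic) and $\deg\psi_s^2=s^2-1$ (leading coefficient $s^2$), so that $\theta_s(x)/x\to 1/s^2$ as $\abs{x}\to\infty$, and then enlarge the constant past $\max\{\abs{x(T)}:T\in E'[s]\setminus\{O\}\}$ to guarantee $sR\neq O$ and the validity of the formula. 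Both arguments are sound and of comparable length. The paper's version is geometrically transparent and produces the constant explicitly in terms of the lattice; yours is purely algebraic, sidesteps the uniformization entirely, and handles the degenerate case $sR=O$ more explicitly than the paper does (the paper tacitly treats a pole of $\wp$ as giving $\abs{x(sR)}=\infty$). Your formal-group variant at the end is also viable over $\C$ since the relevant power series converge in an analytic neighbourhood of $O$, but the division-polynomial argument is the cleaner of your two options because it avoids any convergence discussion. The only cosmetic slip is the intermediate line $\abs{\theta_s(x)}>\abs{x}-(\text{lower order})$, which should read $\abs{\theta_s(x)}>\abs{x}/s^2-\abs{x}/(2s^2)$; the precise bound you state immediately afterwards is the right one.
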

\begin{proof}
Consider the lattice $\Lambda$ such that $\C/\Lambda \cong E'(\C)$. The two groups are isomorphic via the function $\varphi$ as defined in \cite[Chapter VI]{arithmetic}. In particular $x(\varphi(z))=\wp(z)$ with
\[
\wp(z)=\frac{1}{z^2}+\sum_{\omega\in \Lambda\setminus \{0\}}\frac{1}{(z-\omega)^2}-\frac{1}{\omega^2}.
\]
Let $\Lambda_1$ be the closure of the set of complex numbers $z$ such that the element of $\Lambda$ closest to $z$ is $0$. Observe that, for every $R\in E'(\C)$, there exists $z\in \Lambda_1$ such that $\varphi(z)=R$. Let
\[
C_2=\max_{z\in \Lambda_1}\left\{\abs{\sum_{\omega\in \Lambda\setminus \{0\}}\frac{1}{(z-\omega)^2}-\frac{1}{\omega^2}}\right\}.
\]
This is well defined since $\Lambda_1$ is compact and it is finite since $\wp$ has pole only in the elements of $\Lambda$. Take \[r\leq ((2s^2+1)C_2)^{-\frac{1}{2}}\] and consider the ball $B(0,r)$ with radius $r$ and centre $0$. Substituting $r$ with a smaller value, we can assume that $B(0,sr)\subseteq \Lambda_1$.
Take $z \in B(0,r)$ and then $sz\in \Lambda_1$. So,
\begin{equation}\label{eqwp}
\abs{\wp(sz)}\geq \abs{(sz)^{-2}}-C_2\geq \frac{\abs{z^{-2}}+C_2}{2s^2}.
\end{equation}
The last inequality follows from the definition of $r$. Let $C=r^{-2}+C_2$. This constant depends only from $s$ and $\Lambda$. If $z\in \Lambda_1\setminus B(0,r)$, then \[\abs{\wp(z)}\leq r^{-2}+C_2=C.\] So, if $\abs{x(R)}>C$, then there exists $z\in B(0,r)$ such that $\varphi(z)=R$. Then, using (\ref{eqwp}),
\[
\abs{x(sR)}=\abs{\wp(sz)}\geq \frac{\abs{z^{-2}}+C_2}{2s^2}\geq \frac{\abs{x(R)}}{2s^2}.
\]
\end{proof}
\begin{lemma}
	There exists $C_3$, depending only on $P$, $Q$, and $E$ such that\[
	\abs{x(\varphi_I(P+Q'))}_\nu\leq C_3 \log(\Norm{\alpha})^2
	\]
	for every archimedean place $\nu$ of $K$ and for every $I\in \I$.
\end{lemma}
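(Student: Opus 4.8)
The plan is to reduce the desired bound for $P+Q'$ to the already established bound for a fixed $K$-rational point, by showing that the torsion contribution $\varphi_I(Q')$ is uniformly harmless.

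First I would observe that $\varphi_I(Q')$ ranges over a finite set of points, independent of $\alpha$ and of $I$. Indeed $\varphi_I(P)\in E_{\overline{I}}(K)$, because $P\in E(K)$ and $\varphi_I$ is defined over $K$, and $\varphi_I(P+Q')\in E_{\overline{I}}(K)$ by the lemma preceding Definition~\ref{BB}; hence $\varphi_I(Q')=\varphi_I(P+Q')-\varphi_I(P)\in E_{\overline{I}}(K)$, and it is a torsion point since $Q'$ is. As $\Cl(\OO)$ is finite there are only finitely many curves $E_{\overline{I}}$, and each $E_{\overline{I}}(K)_{\mathrm{tors}}$ is finite, so $\varphi_I(Q')$ always lies in the fixed finite set $\bigcup_{\overline{I}}E_{\overline{I}}(K)_{\mathrm{tors}}$. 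Let $N\geq 1$ be the least common multiple of the orders of the points in this set; it depends only on $E$, $Q$. Then $N\varphi_I(Q')=O$, so
\[
N\,\varphi_I(P+Q')=N\varphi_I(P)+N\varphi_I(Q')=\varphi_I(NP);
\]
since $NP$ is non-torsion and $I\neq 0$, the point $\varphi_I(NP)$ is not $O$, and therefore $\varphi_I(P+Q')\neq O$ and its $x$-coordinate is defined.

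Next I would apply the previous lemma with $E'=E_{\overline{I}}$, $s=N$ and $R=\varphi_I(P+Q')$, noting that $sR=\varphi_I(NP)$: it produces a constant $C(E_{\overline{I}},N)$ (and there are only finitely many curves $E_{\overline{I}}$ and archimedean places, hence only finitely many such constants) for which either $\abs{x(\varphi_I(P+Q'))}_\nu\leq C(E_{\overline{I}},N)$ or
\[
\abs{x(\varphi_I(P+Q'))}_\nu<2N^2\,\abs{x(\varphi_I(NP))}_\nu .
\]
To bound the right-hand side I would invoke Lemma~\ref{strengcor} for the fixed point $NP$ and each of the finitely many immersions $K\hookrightarrow\C$: there is a constant $C_1'$ and a finite set of ideals $S$ with $\abs{x(\varphi_I(NP))}_\nu\leq C_1'(\log\Norm{\varphi_I})^2$ for all $I\notin S$, and $\Norm{\varphi_I}=\Norm{I}\leq\Norm{\alpha}$ because $I\mid(\alpha)$. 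For the finitely many $I\in S$ dividing $(\alpha)$, $\abs{x(\varphi_I(NP))}_\nu$ is bounded by an absolute constant, which is absorbed into $(\log\Norm{\alpha})^2$ since $\Norm{\alpha}\geq 2$ gives $\log\Norm{\alpha}\geq\log 2>0$. Taking $C_3$ to be the maximum of the finitely many constants obtained in the two alternatives then gives $\abs{x(\varphi_I(P+Q'))}_\nu\leq C_3(\log\Norm{\alpha})^2$, with $C_3$ depending only on $P$, $Q$, $E$.

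I expect the main obstacle to be the first step: one must rule out that the torsion points $\varphi_I(Q')$ form an infinite family as $\alpha$, and with it $Q'$, varies. This is precisely where one uses the $K$-rationality of $\varphi_I(P+Q')$ established earlier, combined with the finiteness of $\Cl(\OO)$ and of the groups $E_{\overline{I}}(K)_{\mathrm{tors}}$; it is what makes $N$, and hence $C_3$, independent of $\alpha$. The remaining two steps are routine applications of Lemma~\ref{strengcor} and of the multiplication estimate proved just above.
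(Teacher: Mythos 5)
Your proof is correct and follows essentially the same route as the paper: kill the torsion part $\varphi_I(Q')$ by a fixed integer, reduce to $\varphi_I(NP)$, and combine Lemma \ref{strengcor} with the preceding $\wp$-function lemma. The only difference is cosmetic: the paper obtains the uniform annihilating integer as some $s\in\s\cap\Z$ (noting $sQ'\in E[I]$ via Lemma \ref{I1I2}), whereas you derive it from the finiteness of the $K$-rational torsion of the finitely many curves $E_{\overline{I}}$; both are valid.
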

\begin{proof}
	Fix an immersion $K\hookrightarrow \C$ and let $\abs{\cdot}$ be the absolute value in $\C$ that extends $\abs{\cdot}_\nu$. Observe that, for every $s\in \s$ and $i\in I$, we have $siQ'=O$ since $iQ'$ is a multiple of $Q$ as is shown in Lemma \ref{I1I2}. So, given $s\in \s\cap \Z$, we have $s\varphi_I(Q')=O$.
	Thanks to Lemma \ref{strengcor}, for all but finitely many $\alpha$,
	\begin{equation}\label{loga2}
	\abs{x(s\varphi_I(P+Q'))}=\abs{x(s\varphi_I(P))}<C_4(\log \Norm{\alpha})^2
	\end{equation}
	with $C_4$ that depends only on $P$, $s$, and $E$.
	 Assume $C_4(\log \Norm{\alpha})^2>C(E_{\overline{I}},s)$ for every class $\overline{I}$, that happens for all but finitely many $\alpha$. The constant $C(E_{\overline{I}},s)$ is defined in the previous lemma. So, if \[\abs{x(\varphi_I(P+Q'))}>2s^2C_4(\log \Norm{\alpha})^2,\] then $\abs{x(s\varphi_I(P+Q'))}>C_4(\log \Norm{\alpha})^2$ for the previous lemma. This is absurd for (\ref{loga2}). Hence, \[\abs{x(\varphi_I(P+Q'))}\leq 2s^2C_4(\log \Norm{\alpha})^2\leq C_3(\log \Norm{\alpha})^2\] for all but finitely many $\alpha$. We conclude enlarging the constant $C_3$ for the finite remaining $\alpha$.
\end{proof}
\begin{lemma}\label{hat}
	For every $I\in \I$,
	\[
	2\hat{h}(\varphi_I(P+Q'))=\frac{1}{[K:\Q]}\sum_{\nu \notin W}h_\nu (\varphi_I(P+Q'))+O\left((\log\Norm{\alpha})^2\right).
	\]
\end{lemma}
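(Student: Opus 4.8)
The plan is to express the canonical height as its standard sum of local contributions and then to show that discarding the finitely many places in $W$ changes the quantity by at most $O((\log\Norm{\alpha})^2)$.

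First I would use Lemma~\ref{canheight}: since $\abs{h(R)-2\hat{h}(R)}\le C_E$, we have $2\hat{h}(\varphi_I(P+Q'))=h(\varphi_I(P+Q'))+O(1)$, and by the very definition of $h$ the right-hand side is the (weighted) average over $\nu\in M_K$ of the local heights $h_\nu(\varphi_I(P+Q'))$, plus $O(1)$. Splitting this sum according to whether $\nu\in W$ or not, and recalling that $W$ is a finite set (Definition~\ref{defW}) and that $O(1)\le O((\log\Norm{\alpha})^2)$, the statement is reduced to proving
\[
\sum_{\nu\in W} h_\nu(\varphi_I(P+Q'))=O\left((\log\Norm{\alpha})^2\right),
\]
with an implied constant uniform in $I\in\I$; note that only $\Norm{\alpha}$ has to appear, since $\Norm{I}\le\Norm{\alpha}$. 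Because $W$ is finite it suffices to prove the bound one place at a time.

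For an archimedean $\nu\in W$ this is immediate from the previous lemma, which gives $\abs{x(\varphi_I(P+Q'))}_\nu\le C_3(\log\Norm{\alpha})^2$; hence $h_\nu(\varphi_I(P+Q'))=\max\{0,\log\abs{x(\varphi_I(P+Q'))}_\nu\}$ is at most $\max\{0,\log(C_3(\log\Norm{\alpha})^2)\}=O(\log\log\Norm{\alpha})$, which is certainly $O((\log\Norm{\alpha})^2)$. The real work is at the finite places $\nu=\nu_\p\in W$. Here I would write $\varphi_I(P+Q')=\varphi_I(P)+\varphi_I(Q')$ and use the local addition estimate coming from the Weierstrass addition formula, namely $h_\nu(R_1+R_2)\le 2h_\nu(R_1)+2h_\nu(R_2)+O_\nu(1)$, to obtain $h_\nu(\varphi_I(P+Q'))\le 2h_\nu(\varphi_I(P))+2h_\nu(\varphi_I(Q'))+O_\nu(1)$. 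Now $Q'$ is a torsion point, so $\varphi_I(Q')$ is too, and the local height of a torsion point is $O_\nu(1)$: if it does not reduce to $O$ then it is $0$, and if it does then it lies in the formal group, whose only torsion is $p$-power torsion, and the formal parameters of such torsion points have valuation bounded purely in terms of $\nu(p)$; since there are only finitely many curves $E_{\overline{I}}$ this bound is uniform in $I$ and $\alpha$. For $h_\nu(\varphi_I(P))$ I would use the dual isogeny $\hat\varphi_I\circ\varphi_I=[\Norm{I}]$: if $\varphi_I(P)\equiv O\bmod\p$ then $[\Norm{I}]P\equiv O\bmod\p$, and comparing formal parameters along $\hat\varphi_I$ (whose ``derivative'' $\hat\varphi_I^*\omega_E/\omega_{\overline{I}}$ has valuation bounded by $\nu(\Norm{I})$, up to the $O_\nu(1)$ controlled by $\nu(C_{\overline{I}})$) gives $-\nu(x(\varphi_I(P)))\le-\nu(x([\Norm{I}]P))+O_\nu(1)$; a direct formal-group computation for the multiplication map then bounds $-\nu(x([\Norm{I}]P))$ by $O(\log\Norm{I})$, because $[\Norm{I}]P\equiv O$ forces the reduced point to have order $d=O_\nu(1)$, the point $[d]P$ has bounded local height, and the remaining multiplication by $\Norm{I}/d$ changes the parameter valuation by only $O(\nu(\Norm{I}))$. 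Hence $h_\nu(\varphi_I(P))=O(\log\Norm{\alpha})$ and so $h_\nu(\varphi_I(P+Q'))=O(\log\Norm{\alpha})$ at each finite $\nu\in W$.

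Summing the finitely many local estimates over $\nu\in W$ gives $\sum_{\nu\in W}h_\nu(\varphi_I(P+Q'))=O(\log\Norm{\alpha})\le O((\log\Norm{\alpha})^2)$, which together with the reduction in the first paragraph proves the lemma. I expect the finite bad places to be the main obstacle: one must run the formal-group and reduction arguments at places of bad reduction and at ramified places, where the clean statements valid for good, unramified reduction have to be adapted by working with the connected component of the Néron model and by tracking the valuations of the relevant invariant-differential ratios (already packaged in the constants $C_{\overline{I}}$ and in $\nu(\Norm{I})$). This is the same technical point that appears in Streng's treatment of the case $Q=O$ in \cite{streng}; the only genuinely new ingredient here is the splitting off of the torsion contribution $\varphi_I(Q')$, whose local heights are bounded uniformly in $I$ and $\alpha$.
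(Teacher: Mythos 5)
Your reduction of the lemma to the bound $\sum_{\nu\in W}h_\nu(\varphi_I(P+Q'))=O((\log\Norm{\alpha})^2)$, and your treatment of the archimedean places, coincide with the paper's proof. The gap is at the finite places of $W$. The inequality you invoke there, $h_\nu(R_1+R_2)\le 2h_\nu(R_1)+2h_\nu(R_2)+O_\nu(1)$, is false for a single finite place $\nu$: only the global version (summed over all places) holds. Concretely, if $R_1\equiv -R_2 \bmod{\p^k}$ for large $k$ while neither point reduces to $O$, then $h_\nu(R_1)=h_\nu(R_2)=0$ but $h_\nu(R_1+R_2)$ is of size $k$ and unbounded; this is visible in the Weierstrass addition formula, where the slope $(y_2-y_1)/(x_2-x_1)$ acquires a large negative valuation, and in the quasi-parallelogram law for N\'eron local heights, which carries the extra term $\log\abs{x(R_1)-x(R_2)}_\nu$. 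This cancellation scenario is exactly the one you cannot exclude here: nothing prevents $\varphi_I(P)$ from being $\p$-adically very close to $-\varphi_I(Q')$ for some $\nu\in W$ and some $I$, in which case your separate bounds on $h_\nu(\varphi_I(P))$ and $h_\nu(\varphi_I(Q'))$ say nothing about $h_\nu(\varphi_I(P+Q'))$.

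The paper avoids additive splitting entirely. It uses the chain
\[
h_\nu(\varphi_I(P+Q'))\le h_\nu(\alpha P+Q)\le h_\nu(s\alpha P)\le h_\nu(\Norm{s\alpha}P)=O(\log\Norm{\alpha}),
\]
where each step pushes the point forward along an isogeny or a multiplication map: for a point in the kernel of reduction, the formal-group parameter can only get deeper under such maps, so the finite local height never decreases, and multiplying by $s\in\s\cap\Z$ kills the torsion contribution $Q$ \emph{before} any local height is estimated. The last bound is Streng's Lemma 3.7 applied to integer multiples of the fixed point $P$. Your argument can be repaired along the same lines — since $s\varphi_I(Q')=O$, one has $h_\nu(\varphi_I(P+Q'))\le h_\nu(\varphi_I(sP))$ and then your dual-isogeny estimate applies — but as written the step resting on the local addition inequality does not go through.
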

The implied constant in the O-notation depends only from $E$, $P$, and $Q$.
\begin{proof}
	We start by observing that 
	\[
	\frac{1}{[K:\Q]}\sum_{\nu \in W}h_\nu (\varphi_I(P+Q'))=O\left((\log\Norm{\alpha})^2\right).
	\]
	The sum is on finitely many terms and so we need to check that \[h_\nu (\varphi_I(P+Q'))=O\left((\log\Norm{\alpha})^2\right)\] for every $\nu \in W$. 
	
	For the archimedean places, this follows from the previous lemma. 
	
	For the non-archimedean places, this follows from
	\[
	h_\nu (\varphi_I(P+Q'))\leq h_\nu(\alpha P+Q)\leq h_\nu(s\alpha P)\leq h_\nu(\Norm{s\alpha}P) =O(\log\Norm{\alpha})
	\]
	where the last equality follows from \cite[Lemma 3.7]{streng}. Then,
	\begin{align*}
	h(\varphi_I(P+Q'))&=\frac{1}{[K:\Q]}\sum_{\nu \notin W}h_\nu (\varphi_I(P+Q'))+\frac{1}{[K:\Q]}\sum_{\nu \in W}h_\nu (\varphi_I(P+Q'))\\&=\frac{1}{[K:\Q]}\sum_{\nu \notin W}h_\nu (\varphi_I(P+Q'))+O\left((\log\Norm{\alpha})^2\right).
	\end{align*}
	Observe now that the set of elliptic curves $E_{\overline{I}}(K)$ is finite and so it is possible to find a constant $C_5$ so that
	\[
	\abs{2\hat{h}(R)-h(R)}\leq C_5
	\]
	for every $R\in E_{\overline{I}}(K)$ and for every $I$. So,
	\begin{align*}
		2\hat{h}(\varphi_I(P+Q'))&=h(\varphi_I(P+Q'))+O(1)\\&=\frac{1}{[K:\Q]}\sum_{\nu \notin W}h_\nu (\varphi_I(P+Q'))+O\left((\log\Norm{\alpha})^2\right).
	\end{align*}
\end{proof}
\section{Proof of Theorem \ref{Thm1}}\label{secproof}

Now, we are ready to prove Theorem \ref{Thm1}. The two main ingredients are Proposition \ref{nuW} and Lemma \ref{hat}. With Lemma \ref{hat} we know that, in order to bound $h(\alpha P+Q)$, we can study only the good primes, and Proposition \ref{nuW} allows us to bound $h_{\nu}(\alpha P+Q)$ for $\nu\notin W$. Recall that $W$ is defined in Definition \ref{defW} and $\I$ is defined in Definition \ref{defI}.
\begin{proof}[Proof of Theorem \ref{Thm1} ]
	Suppose that $B_{\alpha}$ does not have a primitive divisor for $\Norm{\alpha}>1$. 
	Thanks to Proposition \ref{nuW},
	\[
	\sum_{\nu \notin W}h_\nu(\alpha (P+Q'))\leq\left(\sum_{\nu \notin W}\sum_{\substack{I\in \I\setminus\{(\alpha)\}}}-\mu\left(\frac{(\alpha)}{I}\right) h_\nu(\varphi_I(P+Q'))\right)+2[K:\Q]h(\alpha^{-1}).
	\]
	It is easy to show, using the definition, that $h(\alpha^{-1})=h(\alpha)=O(\log \Norm{\alpha})$. Thus, from Lemma \ref{hat},
	\begin{align*}
		2\hat{h}(\alpha (P+Q'))&\leq \frac{1}{[K:\Q]}\left(\sum_{\substack{I\in \I\setminus\{(\alpha)\}}}\sum_{\nu \notin W}-\mu\left(\frac{(\alpha)}{I}\right) h_\nu(\varphi_I(P+Q'))\right)+O\left(\log\Norm{\alpha}^2\right)\\&\leq\left(\sum_{I\in \I\setminus\{(\alpha)\}}-\mu((\alpha)I^{-1}) h(\varphi_I(P+Q'))\right)+O\left(\log\Norm{\alpha}^2\right)\\&\leq\sum_{I\in \I\setminus\{(\alpha)\}}-2\mu((\alpha)I^{-1}) \hat{h}(\varphi_I(P+Q'))+\sum_{I\in \I}O\left(\log\Norm{\alpha}^2\right)\\&\leq \left(\sum_{I\in \I\setminus\{(\alpha)\}}-2\mu((\alpha)I^{-1}) \hat{h}(\varphi_I(P+Q'))\right)+O\left(\Norm{\alpha}^\eps\right).
	\end{align*}
	Here we are using that $\I$ is a subset of the divisors of $(\alpha)$ and then has cardinality $O(\Norm{\alpha}^{\eps})$ for every $\eps>0$. Recall that the number of divisors of an ideal $I_1$ in $\OO$ is $O(\Norm{I_1}^\eps)$ for every $\eps>0$.
	We know $f_J(\varphi_I(P+Q'))=\alpha (P+Q')$ and $f_J$ has degree $\Norm{J}$, from the definitions of Section \ref{prel}.
	Therefore,
	\[
	\Norm{J}\hat{h}(\varphi_I(P+Q'))=\hat{h}(f_J(\varphi_I(P+Q')))=\hat{h}(\alpha (P+Q'))
	\]
	from Lemma \ref{canheight} and so
	\[
	\hat{h}(\alpha (P+Q'))\leq\sum_{\substack{1\neq J\mid (\alpha)\\(J,\s)=1}}-\mu(J)\frac{\hat{h}(\alpha (P+Q'))}{\Norm{J}}+O\left(\Norm{\alpha}^\eps\right).
	\]
	Recall that $Q'$ is a torsion point. Hence,\[\hat{h}(\alpha (P+Q'))=\hat{h}(\alpha P)=O(\Norm{\alpha}).\] Dividing by $\hat{h}(\alpha (P+Q'))$, we obtain
	\[
	1\leq\sum_{\substack{1\neq J\mid (\alpha)\\(J,\s)=1}}-\mu(J)\frac{1}{\Norm{J}}+O\left(\Norm{\alpha}^{-1+\eps}\right).
	\]
	So,
	\begin{equation}\label{eq:O}
		\sum_{\substack{J\mid (\alpha)\\(J,\s)=1}}\mu(J)\frac{1}{\Norm{J}}= O\left(\Norm{\alpha}^{-1+\eps}\right).
	\end{equation}
	Observe that here we used the hypothesis that $P$ is a non-torsion point and then $\hat{h}(P)>0$.
	Now, we conclude replicating the work in \cite[Proof of the main theorem, page 204]{streng}. Recall that $\Norm{J}$ is the degree of the isogeny $\varphi_{J}$. Using Definition \ref{mob} of the M\"obius function, we have
	\[
	\sum_{\substack{J\mid (\alpha)\\(J,\s)=1}}\mu(J)\frac{1}{\Norm{J}}=\prod_{\substack{\p\mid (\alpha)\\(\p,\s)=1}}\left(1-\frac{1}{\Norm{\p}}\right),
	\] 
	where the product runs over the prime $\OO$-ideals that divide $(\alpha)$.
	Observe that, for every prime divisor of $(\alpha)$, we have $\Norm{\p}\leq \Norm{\alpha}$. Hence,
	\[
	\sum_{\substack{J\mid (\alpha)\\(J,\s)=1}}\mu(J)\frac{1}{\Norm{J}}=\prod_{\substack{\p\mid (\alpha)\\(\p,\s)=1}}\left(1-\frac{1}{\Norm{\p}}\right)\geq \prod_{\Norm{\p}\leq \Norm{\alpha}}\left(1-\frac{1}{\Norm{\p}}\right).
	\]
	Given a prime $\p$, let $p$ be the rational prime over it. So, $\Norm{\p}\geq p$. There are at most two prime $\OO$-ideals over every rational prime $p$. Hence,
	\begin{equation}\label{eqmu}
	\sum_{\substack{J\mid (\alpha)\\(J,\s)=1}}\mu(J)\frac{1}{\Norm{J}}\geq \prod_{\Norm{\p}\leq \Norm{\alpha}}\left(1-\frac{1}{\Norm{\p}}\right)\geq \prod_{p\leq  \Norm{\alpha}}\left(1-\frac{1}{p}\right)^2
	\end{equation}
	where the last product runs over the rational primes.
	By Mertens' Theorem (see \cite[Theorem 429]{hardywright}), there exists an absolute constant $C_6$ so that
	\[
	\frac{C_6}{(\log\Norm{\alpha})}\leq \prod_{p\leq  \Norm{\alpha}}\left(1-\frac{1}{p}\right).
	\] 
	So, using (\ref{eqmu}),
	\[
	\sum_{\substack{J\mid (\alpha)\\(J,\s)=1}}\mu(J)\frac{1}{\Norm{J}}\geq \prod_{p\leq  \Norm{\alpha}}\left(1-\frac{1}{p}\right)^2\geq \frac{C_6^2}{(\log\Norm{\alpha})^2}.
	\]
	Hence, using (\ref{eq:O}),
	\[
	\frac{C_6^2}{(\log\Norm{\alpha})^2}\leq \sum_{\substack{J\mid (\alpha)\\(J,\s)=1}}\mu(J)\frac{1}{\Norm{J}}= O\left(\Norm{\alpha}^{-1+\eps}\right)
	\]
	and then
	\[
	\Norm{\alpha}^{1-2\eps}\leq C_7,
	\]
	where $C_7$ depends on $E,P,Q$ and $\eps$. Choosing $\eps$ small enough, the inequality holds only for finitely many $\alpha$. Hence, $B_\alpha$ does not have a primitive divisor only for $\Norm{\alpha}$ small. So, for all but finitely many $\alpha$, $B_\alpha$ has a primitive divisor.
\end{proof}
\section{Proof of Theorem \ref{Thm2}}\label{f=g}
Before proceeding with the proof of Theorem \ref{Thm2}, we need some preliminary lemmas. These lemmas could seem a bit unrelated to the topics of the paper, but they will be fundamental for the proof of Theorem \ref{Thm2}. In particular, Lemma \ref{K} will be necessary to prove Equation (\ref{eq1}) and Lemma \ref{C} will be necessary to prove Equation (\ref{eqC8}).

 Recall that $L=\OO\otimes_\Z\Q$ is a field of degree at most $2$. We fix the immersion $L\hookrightarrow \C$ as in Section \ref{prel}. Hence, we have an absolute value $\abs{\cdot}$ for the elements of $\OO$. If $\alpha \in \OO$, we have $\abs{\alpha}^2=\Norm{\alpha}$, where $\Norm{\alpha}$ is the degree of the endomorphism. This follows easily from the work in \cite[Section III.6]{arithmetic} and \cite[Section III.9]{arithmetic}.
\begin{lemma}
	Fix $\gamma\in \C$ so that $\gamma^2\in \Z^{<0}$. Let $z_1$ and $z_2\neq 0$ be two complex numbers with $2\abs{\gamma}\abs{z_2}<\abs{z_1}$. There exists $t\in\{1,-1,\gamma,-\gamma\}$ such that\[
	\abs{z_1-tz_2}< \abs{z_1}.
	\]
\end{lemma}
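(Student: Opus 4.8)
The claim is essentially that the lattice $\Z[\gamma]$ (with $\gamma^2\in\Z^{<0}$, so $\gamma=\sqrt{-m}$ up to real scaling, and $\abs\gamma=\sqrt m$) has the property that any complex number $z_1$ can be approximated by a $\{\pm1,\pm\gamma\}$-multiple of a given nonzero $z_2$ with a strict decrease in absolute value, provided $z_1$ is already somewhat larger than $z_2$. The plan is to reduce to the geometry of the four points $\pm1,\pm\gamma$ in the complex plane and a covering argument. Writing $w=z_1/z_2$, the hypothesis becomes $\abs w>2\abs\gamma$ and the goal is to find $t\in\{1,-1,\gamma,-\gamma\}$ with $\abs{w-t}<\abs w$, i.e.\ $t$ lies in the open disc $D$ of radius $\abs w$ centred at $w$, equivalently $2\,\mathrm{Re}(\bar w t)>\abs t^2$. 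Since $\abs t$ is either $1$ or $\abs\gamma$ and in both cases $\abs t^2\le\abs\gamma^2$ is dominated (the relevant bound is $\abs t\le\abs\gamma$, and one checks $\abs\gamma\ge1$ so $\abs\gamma$ is the larger of the two norms, using $\gamma^2\in\Z^{<0}$ hence $\abs\gamma^2\ge1$), it suffices to find $t$ in the set with $\mathrm{Re}(\bar w t)\ge\tfrac12\abs\gamma^2$, and then $\abs w>2\abs\gamma\ge 2\abs\gamma^2/\abs w$ would... — more cleanly: it suffices that $\mathrm{Re}(\bar w t)>0$ together with $\abs w$ large enough relative to $\abs t$, which is exactly what the hypothesis $\abs w>2\abs\gamma$ delivers.

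Concretely, here are the steps I would carry out. First, record $\abs\gamma\ge1$ since $\gamma^2$ is a negative integer. Second, the four unit-direction-ish vectors $1,-1,\gamma/\abs\gamma,-\gamma/\abs\gamma$ are $\pm$ an orthonormal pair (as $\gamma$ is purely imaginary relative to $1$, up to the real scaling built into $\gamma$ — here one uses that $\gamma^2$ real and negative forces $\gamma$ to be a real multiple of $i$), so for any $w\neq0$ at least one of the four real numbers $\mathrm{Re}(\bar w),-\mathrm{Re}(\bar w),\mathrm{Re}(\bar w\gamma/\abs\gamma),-\mathrm{Re}(\bar w\gamma/\abs\gamma)$ is $\ge\abs w/\sqrt2$. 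Choosing the corresponding sign and basis element, we get $t_0\in\{1,-1,\gamma/\abs\gamma,-\gamma/\abs\gamma\}$ with $\mathrm{Re}(\bar w t_0)\ge\abs w/\sqrt2$; multiplying by $\abs\gamma$ if $t_0$ involves $\gamma$, we obtain $t\in\{1,-1,\gamma,-\gamma\}$ with $\mathrm{Re}(\bar w t)\ge\abs w\cdot(\abs t/\abs\gamma)/\sqrt2\ge\abs w/\sqrt2$ when $t=\pm1$ (here $\abs t=1\le\abs\gamma$ so this still holds) and $\mathrm{Re}(\bar w t)\ge\abs w\abs\gamma/\sqrt2$ when $t=\pm\gamma$. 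Third, compute
\[
\abs{w-t}^2=\abs w^2-2\,\mathrm{Re}(\bar w t)+\abs t^2\le \abs w^2-\sqrt2\,\abs w\cdot\min\{1,\abs\gamma\}+\abs\gamma^2=\abs w^2-\sqrt2\,\abs w+\abs\gamma^2,
\]
and observe that $\abs w>2\abs\gamma\ge\sqrt2\,\abs\gamma\cdot\sqrt2$... — the point is $\sqrt2\,\abs w>2\sqrt2\,\abs\gamma\ge\abs\gamma^2/\abs\gamma\cdot\sqrt2$; more simply, from $\abs w>2\abs\gamma\ge 2$ and $\abs\gamma\le\abs w/2$ we get $\abs\gamma^2\le\abs w^2/4<\abs w^2$ and $\sqrt2\,\abs w>\abs\gamma^2/\abs w\cdot$(something); I would just bound $\abs\gamma^2\le(\abs w/2)\abs\gamma<(\abs w/2)(\abs w/2)$, hence $\sqrt2\,\abs w-\abs\gamma^2\ge\sqrt2\,\abs w-\abs w^2/4>0$ whenever $\abs w<4\sqrt2$, and handle large $\abs w$ separately — actually the clean inequality is: since $\abs w>2\abs\gamma$ we have $\abs\gamma^2<\abs\gamma\abs w/2<\abs w^2/4$, so we need $\sqrt2\,\abs w>\abs w^2/4$, false for large $\abs w$. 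So I must instead use the sharper per-case bound: when $t=\pm\gamma$, $\abs{w-t}^2\le\abs w^2-\sqrt2\,\abs w\abs\gamma+\abs\gamma^2<\abs w^2$ iff $\abs\gamma<\sqrt2\,\abs w$, true; when $t=\pm1$, $\abs{w-t}^2\le\abs w^2-\sqrt2\,\abs w+1<\abs w^2$ iff $\sqrt2\,\abs w>1$, true since $\abs w>2\abs\gamma\ge2$. Either way $\abs{w-t}<\abs w$, which rescales back to $\abs{z_1-tz_2}<\abs{z_1}$.

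The main obstacle — and the step deserving care — is the very first structural point: justifying that $\gamma$ is (a real multiple of) $i$, i.e.\ that $\gamma^2\in\R^{<0}$ forces $\bar\gamma=-\gamma$, so that $\{1,\gamma/\abs\gamma\}$ is genuinely an orthonormal $\R$-basis of $\C$; once that is in hand the covering-by-four-quadrants argument giving a direction with $\mathrm{Re}(\bar w t_0)\ge\abs w/\sqrt2$ is routine. A secondary subtlety is keeping the two cases $t=\pm1$ versus $t=\pm\gamma$ separate in the final inequality rather than trying to bound everything by $\abs\gamma^2$, since $\abs\gamma$ may be large; the per-case argument above circumvents this cleanly. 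No deeper input (heights, elliptic curves, the earlier lemmas) is needed — this is a self-contained planar estimate.
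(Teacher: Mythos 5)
Your argument is correct, and it reaches the same underlying idea as the paper — among the four elements $\pm 1,\pm\gamma$, at least one makes $tz_2$ point ``into the same quadrant'' as $z_1$ — but the verification is carried out differently. The paper normalizes by a unimodular factor so that $z_1=x_1+\gamma y_1$ lies on the diagonal $x_1=\abs{\gamma}y_1>0$, chooses $t$ so that $tz_2=x_3+\gamma y_3$ has $x_3,y_3\ge 0$, and then compares coordinates directly, using $x_3\le\abs{\gamma}\abs{z_2}<\abs{z_1}/2<x_1$ to get $(x_1-x_3)^2+\abs{\gamma}^2(y_1-y_3)^2\le x_1^2+\abs{\gamma}^2y_1^2$ with strictness from $z_2\neq 0$. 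You instead divide by $z_2$, use that $\{1,\gamma/\abs{\gamma}\}$ is an orthonormal basis (valid: $\gamma^2\in\Z^{<0}$ forces $\gamma$ to be a real multiple of $i$, and $\abs{\gamma}\ge 1$) to pigeonhole a $t_0$ with $\mathrm{Re}(\bar w t_0)\ge\abs{w}/\sqrt 2$, and finish with $\abs{w-t}^2=\abs{w}^2-2\,\mathrm{Re}(\bar w t)+\abs{t}^2$, treating $t=\pm 1$ and $t=\pm\gamma$ separately. Your two closing inequalities ($\sqrt 2\,\abs{w}>1$ and $\sqrt 2\,\abs{w}>\abs{\gamma}$) both follow from $\abs{w}>2\abs{\gamma}\ge 2$, so the proof closes. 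The one criticism is presentational: the middle of your write-up contains several abandoned attempts to bound everything uniformly by $\abs{\gamma}^2$ (which, as you eventually notice, fails for large $\abs{w}$); in a final version you should delete those and keep only the per-case argument, which is clean and complete. Your case split buys a slightly more quantitative conclusion (an explicit gain of order $\abs{w}$ in $\abs{w-t}^2$), while the paper's coordinate comparison avoids any appeal to the $1/\sqrt 2$ projection bound.
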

\begin{remark}
	The idea of the proof is easy from a geometric point of view. We have a vector $w$ in the complex plane and a set of four vectors $v_i$ that are orthogonal, with $\abs{v_i}\ll \abs{w}$. Then, one can observe that there is at least one of the $v_i$ such that $\abs{w-v_i}<\abs{w}$.
\end{remark}
\begin{proof}
	We begin with an easy observation. If $z_1'=xz_1$ and $z_2'=xz_2$ with $x\in \C^*$, then the lemma holds for $z_1$ and $z_2$ if and only if it holds for $z_1'$ and $z_2'$. Indeed, the inequalities $2\abs{\gamma}\abs{z_2}<\abs{z_1}$ and $\abs{z_1-tz_2}< \abs{z_1}$ hold for $z_1$ and $z_2$ if and only if they hold for $z_1'$ and $z_2'$. So, during the proof, we can multiply $z_1$ and $z_2$ by the same constant.

	We can write in a unique way $z_1=x_1+\gamma y_1$ and $z_2=x_2+\gamma y_2$ with $x_1,y_1,x_2,y_2\in \R$ since $\gamma$ is imaginary.
	Multiplying $z_1$ and $z_2$ by an appropriate element with absolute value $1$, we can assume $x_1=\abs{\gamma} y_1>0$. We can do this for the observation at the beginning of the proof. Choose $t\in\{1,-1,\gamma,-\gamma\}$ so that $tz_2=x_3+\gamma y_3$ with $x_3\geq 0$ and $y_3\geq 0$. For example, if $x_2\geq 0$ and $y_2\leq 0$, we put $t=\gamma$. So, $tz_2=\gamma^2 y_2+\gamma x_2$ and then $x_3=\gamma^2 y_2\geq 0$ and $y_3=x_2\geq 0$. The other cases are analogous. We will show that $\abs{z_1-tz_2}< \abs{z_1}$. By definition,
	\[
	\abs{z_1-tz_2}^2=(x_1-x_3)^2+\abs{\gamma}^2(y_1-y_3)^2.
	\]
	Recall that, by hypothesis, $ 2\abs{\gamma}\abs{z_2}<\abs{z_1}$. We have 
	\[
	0\leq x_3\leq \abs{t}\abs{z_2}\leq \abs{\gamma}\abs{z_2}< \frac{\abs{z_1}}{2}< x_1
	\] 
	where the last equality follows from
	\[
	\abs{z_1}^2=\abs{x_1}^2+\abs{\gamma y_1}^2=2\abs{x_1}^2.
	\]
	So, $0\leq  x_1-x_3\leq x_1$. Therefore, $(x_1-x_3)^2\leq x_1^2$. In the same way, one can prove that $0\leq (y_1-y_3)\leq y_1$. Hence,
	\[
	\abs{z_1-tz_2}^2=(x_1-x_3)^2+\abs{\gamma}^2(y_1-y_3)^2\leq x_1^2+\abs{\gamma}^2y_1^2= \abs{z_1}.
	\]
	We have that the inequality is an equality only if $x_1-x_3=x_1$ and $y_1-y_3=y_1$. This is absurd since $z_2\neq0$. So, $\abs{z_1-tz_2}<\abs{z_1}$.
\end{proof}
Let $\p$ be a good prime and consider the ideal $\Ann_\p(P)$ as defined in Definition \ref{Ip}. If $\p$ is a primitive divisor for $B_\alpha$, then $B_\beta$ is divisible by $\p$ only if $\beta$ can be written as $\beta=\alpha+i$ for $i\in \Ann_\p(P)$. Hence,
\[
\Norm{\alpha}=\min\{\Norm{\alpha+i}: i\in \Ann_\p(P)\}.
\]
We want to understand the relation between $\Norm{\alpha}$ and $\Norm{\Ann_\p(P)}$. We will show that $\Norm{\alpha}$ cannot be much larger (in some sense) than $\Norm{\Ann_\p(P)}$.
\begin{lemma}\label{C}
	Let $I$ be an integral $\OO$-ideal. Let $\alpha\in \OO$ be so that
	\[
	\Norm{\alpha}=\min\{\Norm{\alpha+i}: i\in I\}.
	\] 
	Then, given $i_1$ and $i_2$ in $I$ with $\Norm{i_1}>\Norm{i_2}$, we have
	\[
	\Norm{i_1}-\Norm{i_2}>C_8\Norm{\alpha},
	\]
	with $C_8$ that depends only on $\OO$.
\end{lemma}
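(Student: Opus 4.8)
The plan is to split the estimate into an arithmetic lower bound for the gap $\Norm{i_1}-\Norm{i_2}$ and a geometric upper bound for $\Norm{\alpha}$, and then combine the two. For the lower bound, recall that for every nonzero $i\in I$ the principal ideal $(i)$ is contained in $I$, so $(i)I^{-1}$ is an \emph{integral} $\OO$-ideal (here I use that $\OO$ is a Dedekind domain, hence every ideal is invertible), and therefore, by multiplicativity of the norm, $\Norm{i}=\Norm{(i)}=\Norm{I}\cdot\Norm{(i)I^{-1}}$ with $\Norm{(i)I^{-1}}$ a positive integer, since the degree of a nonzero isogeny is a positive integer. Hence every nonzero $i\in I$ has $\Norm{i}\in\Norm{I}\cdot\Z_{>0}$, so if $\Norm{i_1}>\Norm{i_2}$ then
\[
\Norm{i_1}-\Norm{i_2}\geq\Norm{I}.
\]

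For the upper bound on $\Norm{\alpha}$ I would exploit the finiteness of $\Cl(\OO)$. Fix once and for all integral representatives $J_1,\dots,J_h$ of the ideal classes of $\OO$ and set $R_\OO=\max_k \rho(J_k)$, where $\rho$ denotes the covering radius of a lattice with respect to the fixed embedding $L\hookrightarrow\C$; thus $R_\OO$, as well as each $\Norm{J_k}\geq 1$, depends only on $\OO$. Given $I$, write $I=\beta J_k$ with $\beta\in L^*$. Since $\Norm{x}=\abs{x}^2$ for $x\in L$, the hypothesis $\Norm{\alpha}=\min_{i\in I}\Norm{\alpha+i}$ says precisely that $\abs{\alpha}=\operatorname{dist}(\alpha,I)$, whence
\[
\abs{\alpha}=\operatorname{dist}(\alpha,\beta J_k)=\abs{\beta}\cdot\operatorname{dist}(\alpha/\beta,J_k)\leq\abs{\beta}\,\rho(J_k)\leq\abs{\beta}\,R_\OO .
\]
Squaring and using $\abs{\beta}^2\Norm{J_k}=\Norm{I}$ together with $\Norm{J_k}\geq 1$ gives $\Norm{\alpha}\leq R_\OO^2\abs{\beta}^2\leq R_\OO^2\,\Norm{I}$.

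Combining the two displays, $\Norm{i_1}-\Norm{i_2}\geq\Norm{I}\geq R_\OO^{-2}\Norm{\alpha}$, so the lemma holds with $C_8=\tfrac12 R_\OO^{-2}$: if $\Norm{\alpha}>0$ then $\Norm{i_1}-\Norm{i_2}\geq 2C_8\Norm{\alpha}>C_8\Norm{\alpha}$, and if $\Norm{\alpha}=0$ the desired inequality is just the hypothesis $\Norm{i_1}>\Norm{i_2}$. The only delicate point is the uniformity in $I$ of the second step — that the covering radius of the lattice $I$ is bounded by a multiple of $\sqrt{\Norm{I}}$ that is independent of $I$ — which is exactly what the reduction to the finitely many ideal classes provides; one could alternatively bypass class groups and argue directly, bounding the covering radius of $I$ by its successive minima and invoking Minkowski's convex body theorem together with $\lambda_1(I)^2\geq\Norm{I}$.
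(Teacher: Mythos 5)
Your argument is correct, and it shares the paper's two-step skeleton: first the arithmetic bound $\Norm{i_1}-\Norm{i_2}\geq\Norm{I}$ (obtained by exactly the same multiplicativity-of-the-degree argument), then a uniform geometric bound $\Norm{\alpha}\leq C\,\Norm{I}$, combined at the end. Where you genuinely diverge is in how the second bound is proved. The paper establishes a bespoke lemma: if $i$ is a shortest nonzero element of $I$ and $2\abs{\gamma}\abs{i}<\abs{\alpha}$ (with $\gamma\in\OO$, $\gamma^2\in\Z^{<0}$), then subtracting $ti$ for a suitable $t\in\{\pm 1,\pm\gamma\}$ strictly decreases $\abs{\alpha}$, contradicting the minimality hypothesis; combined with the cited bound $\Norm{i}\leq C_9\Norm{I}$ on the first successive minimum, this yields $\Norm{\alpha}\leq 4C_9\Norm{\gamma}\Norm{I}$. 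You instead read the minimality hypothesis directly as $\abs{\alpha}=\operatorname{dist}(\alpha,I)\leq\rho(I)$ and bound the covering radius uniformly in $I$ by scaling to the finitely many ideal-class representatives. The two routes prove essentially the same geometric fact --- the paper's lemma is in effect a covering-radius bound via the first minimum, which is the alternative you sketch at the end --- but yours is arguably cleaner and makes the uniformity mechanism (finiteness of $\Cl(\OO)$) explicit, whereas the paper's is self-contained and treats $\OO=\Z$ by a separate one-line argument. In your version the only point to flag is that for $\OO=\Z$ the lattice $I$ has rank one, so the covering radius must be taken inside $\R=L\otimes_\Q\R$ rather than all of $\C$; this is harmless since $\alpha$ lies in that span. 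You are also more careful than the paper about delivering the strict inequality claimed in the statement.
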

\begin{proof}
	If $i\in I$, then $\Norm{I}$ divides $\Norm{i}$. This follows from the fact that the degree is multiplicative and $I\mid (i)$. So, $\Norm{i_1}$ and $\Norm{i_2}$ are both multiples of $\Norm{I}$ and then
	\begin{equation}\label{eqnorm}
	\Norm{i_1}-\Norm{i_2}\geq\Norm{I}.
	\end{equation}
	If $\OO=\Z$, the Lemma is easy. Assuming $\alpha>0$ and $I=(i)$ with $i>0$, then $\alpha<i$, otherwise $\Norm{\alpha-i}<\Norm{\alpha}$. So, $\Norm{I}>\Norm{\alpha}$. The case $\alpha<0$ is analogous. Since $\Z$ is a PID, we conclude that the lemma holds with $C_8=1$. 
	
	So, we assume that $L=\OO\otimes_\Z \Q$ is an imaginary quadratic field.
	Let $i\neq 0$ be such that
	\[
	\Norm{i}=\min\{\Norm{j}: j\in I\setminus\{0\}\}
	\]
	and then it is well-known that there exists a constant $C_9$ depending only on $L$ so that $\Norm{i}\leq C_9\Norm{I}$ (see \cite[Lemma 6.2]{neuk}). Let $\gamma\in \OO$ be such that $\gamma^2\in\Z<0$. The existence of $\gamma$ follows from the fact that $L$ is an imaginary quadratic field. Put $z_1=\alpha$ and $z_2=i$. Fixing an embedding $L\hookrightarrow \C$, we can assume that $z_1$ and $z_2$ are in $\C$. For a contradiction, assume $2\abs{\gamma}\abs{z_2}<\abs{z_1}$. Therefore, from the previous lemma, 
	\[
	\abs{z_1-tz_2}<\abs{z_1}
	\]
	where $t \in\{1,-1,\gamma,-\gamma\}\subseteq \OO$. Hence, $t i\in I$ and
	\[
	\Norm{\alpha-t i}<\Norm{\alpha}.
	\]
		Indeed, using the definition of the degree, we have $\Norm{\cdot}=\abs{\cdot}^2$.
	This is absurd for the hypothesis $\Norm{\alpha}=\min\{\Norm{\alpha+i}: i\in I\}.$ So, $2\abs{\gamma}\abs{z_2}\geq\abs{z_1}$ and then
	\[
	4\Norm{\gamma}\Norm{i}\geq \Norm{\alpha}.
	\]
	Therefore,
	\[
	\Norm{\alpha}\leq 4\Norm{\gamma}\Norm{i}\leq 4C_9\Norm{\gamma}\Norm{I}.
	\]
	Finally, using (\ref{eqnorm}),
	\[
	\Norm{i_1}-\Norm{i_2}\geq\Norm{I}\geq \frac{\Norm{\alpha}}{4C_9\Norm{\gamma}}.
	\]
\end{proof}
Before proving Theorem \ref{Thm2}, we need another technical lemma.
\begin{lemma}\label{K}
	Take $f$, $g$, $\alpha$ and $\beta$ in $\OO$. Assume $\Norm{\beta}<\Norm{\alpha}$ and $\Norm{\beta g+f}\geq \Norm{\alpha g+f}$. Then,
	\[
	0\leq \Norm{\beta g+f}-\Norm{\alpha g+f}\leq C_{10}\sqrt{\Norm{\alpha}}
	\]
	for $C_{10}$ that depends only on $f$ and $g$.
\end{lemma}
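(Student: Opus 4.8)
The plan is to work with the fixed embedding $\OO\subseteq L\hookrightarrow\C$ introduced in Section \ref{prel}, under which $\Norm{\delta}=\abs{\delta}^2$ for every $\delta\in\OO$, so that the ``degree'' becomes a genuine quadratic form on $\OO$ and the whole statement reduces to an elementary inequality in $\C$. First I would dispose of the degenerate case $g=0$: then $\Norm{\beta g+f}=\Norm{f}=\Norm{\alpha g+f}$, and the claim holds with any constant. So from now on assume $g\neq 0$.

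Next, expand the two squared absolute values. Using $\abs{z}^2=z\bar z$ together with $\bar w+w=2\Re(w)$ and $\abs{\bar w}=\abs{w}$, one obtains
\[
\Norm{\beta g+f}-\Norm{\alpha g+f}=\abs{\beta g+f}^2-\abs{\alpha g+f}^2=(\Norm{\beta}-\Norm{\alpha})\Norm{g}+2\Re\big(\overline{(\beta-\alpha)g}\,f\big).
\]
The first summand is $\leq 0$ since $\Norm{\beta}<\Norm{\alpha}$ and $\Norm{g}\geq 0$, so it can only help. For the second summand, apply $\Re(w)\leq\abs{w}$ and multiplicativity of the absolute value to get $2\Re\big(\overline{(\beta-\alpha)g}\,f\big)\leq 2\abs{\beta-\alpha}\,\abs{g}\,\abs{f}$, and then bound $\abs{\beta-\alpha}\leq\abs{\beta}+\abs{\alpha}=\sqrt{\Norm{\beta}}+\sqrt{\Norm{\alpha}}<2\sqrt{\Norm{\alpha}}$, using the hypothesis $\Norm{\beta}<\Norm{\alpha}$ a second time. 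Combining these bounds yields
\[
\Norm{\beta g+f}-\Norm{\alpha g+f}\leq 4\abs{f}\,\abs{g}\,\sqrt{\Norm{\alpha}}=4\sqrt{\Norm{f}\Norm{g}}\;\sqrt{\Norm{\alpha}},
\]
so the lemma holds with $C_{10}=4\sqrt{\Norm{f}\Norm{g}}$, which depends only on $f$ and $g$. The lower bound $0\leq\Norm{\beta g+f}-\Norm{\alpha g+f}$ is nothing but the standing hypothesis $\Norm{\beta g+f}\geq\Norm{\alpha g+f}$, so there is nothing to prove there.

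I do not expect a real obstacle here: the statement is a direct consequence of expanding $\abs{\cdot}^2$ in $\C$ and using the triangle inequality. The only points requiring a little care are translating $\Norm{\cdot}$ into $\abs{\cdot}^2$ via the chosen embedding (so that the ``diagonal'' term is linear in $\Norm{\beta}-\Norm{\alpha}$ and hence harmless), treating the case $g=0$ separately, and checking that the final constant is expressed purely in terms of $\Norm{f}$ and $\Norm{g}$ as the statement requires.
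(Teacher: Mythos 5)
Your proof is correct and follows essentially the same route as the paper's: both expand $\Norm{\cdot}=\abs{\cdot}^2$ under the fixed embedding into $\C$, discard the nonpositive diagonal term $\Norm{g}(\Norm{\beta}-\Norm{\alpha})$, and bound the cross term by $4\abs{\alpha}\abs{fg}=4\sqrt{\Norm{f}\Norm{g}}\sqrt{\Norm{\alpha}}$. The only difference is that you treat the degenerate case $g=0$ explicitly, which is a harmless (indeed slightly tidier) addition.
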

\begin{proof}
	Given $\gamma\in \OO$, we put $\overline{\gamma}$ as the conjugate of $\gamma$. Thus, $\Norm{\gamma}=\gamma\overline{\gamma}$. Denote with $\Re{x}$ the real part of the complex number $x$. Here, we use again the embedding $L \hookrightarrow \C$. So,
	\begin{align*}
		0&\leq \Norm{\beta g+f}-\Norm{\alpha g+f}\\&=(\beta g+f)\overline{(\beta g+f)}-(\alpha g+f)\overline{(\alpha g+f)}\\&=\Norm{g}(\Norm{\beta}-\Norm{\alpha})+(\beta-\alpha)g\overline{f}+\overline{(\beta-\alpha)g\overline{f}}\\&< (\beta-\alpha)g\overline{f}+\overline{(\beta-\alpha)g\overline{f}} \\&\leq \abs{2\Re{(\beta-\alpha)g\overline{f}}}\\&\leq \abs{2(\beta-\alpha)fg}\\&\leq 4\abs{\alpha}\abs{fg}.
	\end{align*}
	We conclude using that $\Norm{\alpha}=\abs{\alpha}^2$.
\end{proof}
Now, we are ready to prove the theorem. We want to show that, if $f(P)=g(Q)$, then $B_\alpha(P,Q)$ has a primitive divisor for all but finitely many terms.
\begin{proof}[Proof of Theorem \ref{Thm2}]
	Suppose that $f=0$. Then, $g(Q)=O$ and so $Q$ is a torsion point. Then, this is exactly Theorem \ref{Thm1}.
	
	Hence, we assume that $f\neq 0$. Take $P'\in E(\overline{\Q})$ so that $g(P')=P$. Here we are using the hypothesis $g\neq 0$. Then, \[g(f(P')-Q)=f(g(P'))-g(Q)=f(P)-g(Q)=O\] and so $f(P')=Q-T$ with $T$ a torsion point. So, \[\alpha P+Q=\alpha gP'+fP'+T=(\alpha g+f)P'+T\] and then $B_\alpha(P,Q)=B_{\alpha g+f}(P',T)$. Enlarging $K$, we can assume that $P'$ and $T$ belong to $E(K)$. Observe that $\alpha g+f\in \OO$ since $f$ and $g$ are in $\OO$. Thanks to Theorem \ref{Thm1}, for all but finitely many $\alpha$, there exists a prime $\p$ that is a primitive divisor for $B_{\alpha g+f}(P',T)$ in the sequence $\{B_\gamma(P',T)\}_{\gamma\in \OO}$. 
	
	Suppose that $\p$ is not a primitive divisor for $B_\alpha(P,Q)$. We will show that this happens only for $\Norm{\alpha}$ small. Since $\p$ is not a primitive divisor, then there exists $\beta$ so that $\p$ divides $B_\beta(P,Q)$, $\Norm{\beta}<\Norm{\alpha}$ and $B_\beta(P,Q)\neq 0$. Observe that \[B_{\beta g+f}(P',T)=B_\beta(P,Q)\neq 0.\] Since $\p$ is a primitive divisor for $B_{\alpha g+f}(P',T)$, we have $\Norm{\alpha g+f}\leq \Norm{\beta g+f}$. Thanks to Lemma \ref{K}
	\begin{equation}\label{eq1}
		0\leq \Norm{\beta g+f}-\Norm{\alpha g+f}\leq C_{10}\sqrt{\Norm{\alpha}}.
	\end{equation}
	Recall that $C_{10}$ is defined in Lemma \ref{K}.
	Take $\Ann_{\p}(P')$ as defined in Definition \ref{Ip}. Thus, $\delta P'+T\equiv O \mod{\p}$ if and only if $\delta \in \alpha g+f+\Ann_{\p}(P')$ since
	\[
	(\delta-\alpha g-f)P'\equiv \delta P'+T-((\alpha g+f)P'+T)\equiv O\mod{\p}.
	\]
	
	If $T=O$, then $\Ann_{\p}(P')=(\alpha g+f)$. Thus, for $i_1,i_2\in \Ann_{\p}(P')$ with $\Norm{i_1}>\Norm{i_2}$,
		\begin{equation}\label{TO}
		\Norm{i_1}-\Norm{i_2}\geq \Norm{\alpha g+f}.
		\end{equation}
	
	Assume $T\neq O$. Observe that $P'$ is a non-torsion point since $g(P')=P$. Then $\gamma P'+T\neq O$ for every $\gamma\in \OO$ and therefore $B_\gamma(P',T)\neq 0$. Since $\p$ is a primitive divisor for $B_{\alpha g+f}(P',T)$,
	\[
	\Norm{\alpha g+f}=\min\{\Norm{\alpha g+f+i} : i\in \Ann_{\p}(P')\}.
	\] Thus, we are in the hypothesis of Lemma \ref{C}. So, for $i_1,i_2\in \Ann_{\p}(P')$ with $\Norm{i_1}>\Norm{i_2}$,
	\begin{equation}\label{eqC8}
	\Norm{i_1}-\Norm{i_2}\geq C_8\Norm{\alpha g+f}
	\end{equation}
	where $C_8$ is defined in Lemma \ref{C}. Combining with Equation (\ref{TO}) and potentially decreasing $C_8$, we have that
		\[
		\Norm{i_1}-\Norm{i_2}\geq C_8\Norm{\alpha g+f}
		\]
		even in the case $T=O$.
		 
	Fix $s\in \OO$ such that $sT=O$.
	Observe that $s(\alpha g+f)$ and $s(\beta g+f)$ both belong to $\Ann_{\p}(P')$ since 
	\[
	s(\alpha g+f) P'\equiv s(\alpha g+f) P'+sT\equiv s((\alpha g+f) P'+T)\equiv O\mod{\p}.
	\] Using Lemma \ref{C},
	\[
	\Norm{s(\beta g+f)}-\Norm{s(\alpha g+f)}\geq C_8\Norm{\alpha g+f}.
	\]
	So, using (\ref{eq1}), we have
	\begin{equation}\label{K'alpha}
	C_{10}\sqrt{\Norm{\alpha}}\geq \Norm{\beta g+f}-\Norm{\alpha g+f}\geq \frac{C_8}{\Norm{s}}\Norm{\alpha g+f}=O(\Norm{\alpha}).
	\end{equation}
	For $\Norm{\alpha}$ large enough, Equation (\ref{K'alpha}) cannot hold. Thus, $\p$ is a primitive divisor for $B_{\alpha}(P,Q)$ for $\Norm{\alpha}$ large enough. In conclusion, for all but finitely many $\alpha$, the ideal $B_\alpha(P,Q)$ has a primitive divisor.
\end{proof}
\begin{proof}[Proof of Theorem \ref{Thm3}]
	Since $\Z$ is a Dedekind domain, the theorem follows from Theorem \ref{Thm1}. 
\end{proof}
Now, we want to use Theorem \ref{Thm2} to prove a corollary for the sequence $\{B_n(P,Q)\}_{n\in \Z}$. Thanks to Theorem \ref{Thm3}, we know that if $Q$ is a torsion point, then $B_n(P,Q)$ has a primitive divisor for all but finitely many terms. We will prove a similar result, without the hypothesis that $Q$ is a torsion point. In order to do so, we need some additional hypotheses on the curve $E$. 
\begin{corollary}\label{corshifter}
	Let $K$ be a number field. Take $E$ an elliptic curve such that one of the following holds:
	\begin{itemize}
		\item $E(K)$ has rank $1$ or,
		\item $E$ has CM, $E(K)$ has rank $2$, $\End(E)$ is a Dedekind domain, and $L=\End(E)\otimes_\Z\Q\subseteq K$.
	\end{itemize} Take $P$ a non-torsion point and $Q$ in $E(K)$. Then, $B_n(P,Q)$ has a primitive divisor for all but finitely many $n$.
\end{corollary}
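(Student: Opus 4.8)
The plan is to reduce Corollary \ref{corshifter} to Theorem \ref{Thm2} by producing, under either hypothesis, endomorphisms $g \neq 0$ and $f$ in $\OO = \End(E)$ with $f(P) = g(Q)$. Once such $f$ and $g$ exist, Theorem \ref{Thm2} (applied with $\OO = \End(E)$, which in both cases is a Dedekind domain: it is $\Z$ in the rank-$1$ non-CM case, and is assumed to be a Dedekind domain in the CM case) immediately gives that $B_\alpha(P,Q)$ has a primitive divisor for all but finitely many $\alpha \in \OO$, and in particular for all but finitely many $n \in \Z \subseteq \OO$, since $\{B_n(P,Q)\}_{n\in\Z}$ is a subsequence of $\{B_\alpha(P,Q)\}_{\alpha\in\OO}$ (note $\Norm{n}$ for $n \in \Z$ viewed as an endomorphism is $n^2$, so the ordering by norm is compatible). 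The key observation is that the existence of $f,g$ is forced by a rank constraint: both $P$ and $Q$ lie in $E(K)$, and we are assuming the non-torsion part of $E(K)$ is "small".

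First I would handle the rank-$1$ case. Here $E(K) \cong \Z \oplus E(K)_{\mathrm{tors}}$ as a $\Z$-module. Let $R$ be a generator of the free part. Then $P = aR + T_1$ and $Q = bR + T_2$ for integers $a, b$ and torsion points $T_1, T_2$, with $a \neq 0$ since $P$ is non-torsion. Set $g = a \in \Z = \OO$ and $f = b \in \Z = \OO$. Then $f(P) - g(Q) = b(aR + T_1) - a(bR + T_2) = bT_1 - aT_2$, which is torsion. This is not quite $f(P) = g(Q)$, but Theorem \ref{Thm2} only needs $f(P) = g(Q)$ exactly; so instead I would let $m$ be the order of the torsion point $bT_1 - aT_2$ and replace $f$ by $mb$ and $g$ by $ma$. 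Then $(mb)(P) - (ma)(Q) = m(bT_1 - aT_2) = O$, and $g = ma \neq 0$. This produces the required pair.

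Next I would handle the CM, rank-$2$ case. Now $E(K)_{\mathrm{free}}$ has $\Z$-rank $2$, but it is a module over $\OO = \End(E)$, which is an order of rank $2$ in the imaginary quadratic field $L$; since $\OO$ is a Dedekind domain, $E(K)_{\mathrm{free}} \otimes_\Z \Q$ is a $1$-dimensional $L$-vector space (it has $\Q$-dimension $2$ and carries an $L$-action, hence is $L^1$). Therefore, modulo torsion, $Q$ is an $L$-multiple of $P$: there is $\lambda \in L^*$ (or $\lambda = 0$ if $Q$ is torsion) with $\lambda P = Q$ in $E(K) \otimes \Q$. Writing $\lambda = f_0 / g_0$ with $f_0, g_0 \in \OO$ and $g_0 \neq 0$, we get $g_0(Q) - f_0(P) =: T$ is a torsion point of $E(K)$; multiplying $f_0, g_0$ by the order of $T$ gives $f, g \in \OO$ with $g \neq 0$ and $f(P) = g(Q)$, as needed. (The hypothesis $L \subseteq K$ ensures $\OO$ acts on $E(K)$, which is exactly what lets us speak of $E(K)$ as an $\OO$-module.)

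The main obstacle — really the only nontrivial point — is the structural claim in the CM case that $E(K)_{\mathrm{free}} \otimes_\Z \Q$ is one-dimensional over $L$ as soon as it has $\Z$-rank $2$ and $L \subseteq K$: one must be sure the $\End(E)$-action on $E(K)$ is the expected one and that no torsion subtleties obstruct passing from "$L$-multiple up to torsion" to "honest relation $f(P) = g(Q)$ in $\OO$." Both are routine: the action is the geometric one fixed in Section \ref{prel}, and clearing denominators plus multiplying by the order of the resulting torsion point always yields an exact relation. After that, invoking Theorem \ref{Thm2} and restricting the index set from $\OO$ to $\Z$ finishes the proof.
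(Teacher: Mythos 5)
Your proposal is correct and follows essentially the same route as the paper: reduce to Theorem \ref{Thm2} by exhibiting $f,g\in\OO$ with $g\neq 0$ and $f(P)=g(Q)$, via the decomposition $P=aR+T_1$, $Q=bR+T_2$ in the rank-$1$ case and via the fact that $P$ and $Q$ are $L$-proportional modulo torsion in the CM rank-$2$ case. The only difference is cosmetic: the paper verifies the rank-$2$ proportionality by an explicit computation with generators $R_1,R_2$ of $E(K)/E^{\mathrm{tors}}(K)$, whereas you phrase it as the statement that $E(K)\otimes_\Z\Q$ is a one-dimensional $L$-vector space, and you are (helpfully) more explicit than the paper about why a primitive divisor relative to the index set $\OO$ remains primitive relative to $\Z$.
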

\begin{proof}
	Suppose that $E$ has rank $1$. Let $R$ be the generator of the non-torsion points and so $P=aR+T_1$ and $Q=bR+T_2$ with $T_1$ and $T_2$ two torsion points. Observe that $a\neq 0$ since $P$ is a non-torsion point. Take $n>0$ such that $nT_1=nT_2=O$ and then
	\[
	bnP=bnaR+bnT_1=anQ-anT_2+bnT_1=anQ.
	\]
	So, we take $f=bn$ and $g=an$. Observe that $g\neq 0$ since $a\neq 0$ and $n\neq 0$. We conclude applying Theorem \ref{Thm2} to $\OO=\Z$, since we have that $f(P)=g(Q)$ with $f,g\in \OO$. 
	
	Suppose now that $E$ has rank $2$ and has complex multiplication. Let $L=\End(E)\otimes_\Z\Q$, which is a quadratic field. Therefore, $\End(E)$ is an order in $\OO_L$. If we prove that $f(P)=g(Q)$ for $f,g\in \End(E)$, then we can apply Theorem \ref{Thm2} to conclude. 
	
	Let $R_1$ be a non-torsion point and $\gamma\in \End(E)\setminus \Z$. Then, $\gamma(R_1)-mR_1$ is not a torsion point for every $m\in \Z$. Indeed, if it is a torsion point of order $n$, then $n(\gamma-m)\neq 0$ has kernel every multiple of $R_1$, that is absurd since $R_1$ has infinite order. 
	
	Let $E^{\text{tors}}(K)$ be the subgroup of the torsion points and $E'=E(K)/E^{\text{tors}}(K)$, which is a free group of rank $2$. So, $\gamma R_1-mR_1\neq 0$ in $E'$. Suppose that $R_1$ and $R_2$ are the generators of $E'$. Take $\gamma \in \End(E)\setminus \Z$ and then \[\gamma R_1=aR_1+bR_2,\] with $b\neq 0$ (the equality is in $E'$). Here we are using the hypothesis $L=\End(E)\otimes_\Z\Q\subseteq K$ since we need that $\gamma R_1\in E(K)$. Take $P_1$ and $Q_1$ in $E'$ as the image of $P$ and $Q$ under the quotient. So, $P_1=xR_1+y R_2$ with $x,y\in \Z$ (the equality is in $E'$). Hence,
	\[
	bP_1=bxR_1+y(bR_2)=(bx+y\gamma -ya) R_1
	\]
	and so $bP_1=\alpha R_1$ for $\alpha\in \End(E)$. Since $b\neq 0$, we have $bP_1\neq O$ using that $P$ is a non-torsion point. So, $\alpha\neq 0$. In the same way, $bQ_1=\beta R_1$ for $\beta\in \End(E)$.
	Observe that, in $E'$,
	\[
	\beta bP_1=\beta \alpha R_1=\alpha b Q_1.
	\]
	Hence, returning to $E$,
	\[
	\beta bP-\alpha bQ=T,
	\]for $T\in E^{\text{tors}}(K)$. Take $n>0$ such that $nT=O$ and then
	\[
	n\beta b P-n\alpha bQ=nT=O.
	\]
	So, putting $f=n\beta b$ and $g=nb\alpha$, we have $f(P)=g(Q)$. Observe that $g\neq 0$ since $\alpha\neq 0$, $b\neq 0$, and $n\neq 0$. We conclude by applying Theorem \ref{Thm2}.
\end{proof}
\begin{example}
Let $E$ be the elliptic curve defined by the equation $y^2=x^3-2x$ in $K=\Q(i)$. Then, checking the database \cite{lmfdb}, we have $\End(E)=\Z[i]$ and $E(K)$ has rank $2$. Moreover $\End(E)\otimes_\Z\Q= K$. The ring $\Z[i]$ is a Dedekind domain and so we are in the hypothesis of the previous corollary. Hence, for every non-torsion $P$ and every $Q$ in $E(K)$, $B_n(P,Q)$ has a primitive divisor for all but finitely many term. This example shows that the previous corollary is actually more general than Theorem \ref{Thm3}.
\end{example}
\section*{Acknowledgements}
The author wants to gratefully thank Professor Marco Streng for proposing the problem, for carefully reading a preliminary version of the paper, and for the many useful comments. The author also wants to thank the anonymous referee for valuable comments which greatly have improved this article. 
			\normalsize
	\bibliographystyle{plain}
	\bibliography{biblio}
	MATTEO VERZOBIO, UNIVERSIT\'A DI PISA, DIPARTIMENTO DI\\ MATEMATICA, LARGO BRUNO PONTECORVO 5, PISA, ITALY\\
	\textit{E-mail address}: matteo.verzobio@gmail.com
\end{document}